\newtheorem{theorem}{Theorem}[section]
\newtheorem{lemma}[theorem]{Lemma}
\newtheorem{proposition}[theorem]{Proposition}
\newtheorem{corollary}{Corollary}
\theoremstyle{definition}
\newtheorem{definition}{Definition}
\newtheorem{remark}{Remark}
\newtheorem{conjecture}{Conjecture}
\newtheorem{problem}{Problem}
\newcommand{\scp}[1]{\langle#1\rangle}
\newcommand{\uxs}{\boldsymbol{\xi}^{\ast}}
\newcommand{\bg}{\boldsymbol{\Gamma}_{\mathcal{L}}}
\newcommand{\uz}{\boldsymbol{\zeta}}
\newcommand{\ux}{\boldsymbol{\xi}}
\newcommand{\om}{\omega}
\newcommand{\wo}{\widehat{\omega}}
\newcommand{\R}{\mathbb{R}}
\newcommand{\uu}{ \mathbf{u} }
\newcommand{\rr}{ \mathbf{r} }
\newcommand{\g}{ \mathbf{g} }
\newcommand{\h}{ \mathbf{h} }
\newcommand{\w}{ \mathbf{w} }
\newcommand{\vv}{ \mathbf{v} }
\newcommand{\pp}{ \mathbf{p} }
\newcommand{\qq}{ \mathbf{q} }
\newcommand{\xx}{ \boldsymbol{x}  }
\newcommand{\yy}{ \boldsymbol{y}  }
\newcommand{\Z}{ \mathbb{Z}  }
\newcommand{\bb}{\boldsymbol{b}}
\newcommand{\ee}{\boldsymbol{e}}
\newcommand{\0}{\boldsymbol{0}}
\begin{document}

\title[On the geometry of best approximations for a linear form]{Metrical results on the geometry of best approximations for a linear form}

\author{Johannes Schleischitz}


\thanks{Middle East Technical University, Northern Cyprus Campus, Kalkanli, G\"uzelyurt \\
    johannes@metu.edu.tr ; jschleischitz@outlook.com}


\begin{abstract}
	Consider the integer best approximations of a 
	linear form in $n\ge 2$ real variables. While it is well-known that
	any tail of this sequence always spans a lattice 
	of dimension at least three, Moshchevitin showed that this bound
	is sharp for any $n\ge 2$. In this paper, we determine the exact Hausdorff
	and packing dimension of the set where equality occurs, 
	in terms of $n$.
	Moreover, independently we show that there exist real vectors
	whose best approximations lie in a union of two two-dimensional sublattices of $\Z^{n+1}$. Our lattices jointly span a lattice of dimension three only, 
thereby leading to an alternative constructive proof of Moshchevitin's result. We determine the packing dimension and up to a small error term $O(n^{-1})$ also the Hausdorff dimension of the according set. 
Our method combines a new construction for a linear
form in two variables $n=2$ with a result by
Moshchevitin to amplify them. We further employ the recent variatonal principle and 
some of its consequences, as well as estimates for Hausdorff
and packing dimensions of Cartesian products and fibers.
Our method permits much freedom for the induced classical
exponents of approximation.
\end{abstract}

\maketitle

{\footnotesize{

{\em Keywords}: best approximation, Hausdorff dimension, packing dimension, variational principle \\
Math Subject Classification 2020: 11J06, 11J13}}



\section{Best approximations in small sublattices  } \label{intro}
Let $\ux=(\xi_1,\ldots, \xi_n)\in \R^n$ and for simplicity
consider the maximum norm 
on $\R^{n}$ denoted by $\Vert.\Vert$. For convenience we introduce the notation $\uxs=(\ux,1)\in\R^{n+1}$.
A classical topic in Diophantine approximation is to 
study small absolute values of a linear form
\begin{equation} \label{eq:ONE}
\qq\cdot \uxs= q_1 \xi_1 + \cdots + q_n \xi_n + q_{n+1},
\end{equation}
for non-zero integer vectors $\qq= (q_1,\ldots, q_{n+1})$. 
Here small means compared to $\Vert\hat\qq\Vert$, where $\hat{\xx}\in\R^n$ denotes the restriction of $\xx\in \mathbb{R}^{n+1}$
to its first $n$ coordinates.
We assume \eqref{eq:ONE} does not vanish for any 
integer vector $\qq\ne \0$,
and then call $\ux$ totally irrational.
We call a vector $\qq\in\Z^{n+1}$ 
a best approximation for $\ux$ if 
\[
|\qq\cdot \uxs|= 
\min_{0<\Vert\hat\bb\Vert\le \Vert\hat\qq\Vert} |\bb\cdot \uxs|,
\]
where the minimum is taken over all $\bb\in\Z^{n+1}$ with norm of $\hat\bb\ne 0$ at most $\Vert\hat\qq\Vert$.\footnote{ We follow the classical definition as for example in~\cite{ngm}, that indeed uses 
the norms of the restricted integer vectors without last coordinate (constant term). When omitting the ``hat'', even
though $\Vert \hat\qq\Vert \asymp_{\ux} \Vert \qq\Vert$, the concrete sequence may change in some cases. Our results are valid 
with either definition. }
They are unique up to sign for totally irrational $\ux$.
Considering the set of all best approximations with norms 
in increasing order gives rise to the sequence $\qq_j\in \mathbb{Z}^{n+1}$, $j\ge 1$, of best approximations
associated to $\ux$
with the properties
\[
1=\Vert \hat{\qq}_1\Vert< \Vert\hat\qq_2\Vert <\cdots,\qquad  |\qq_1\cdot \uxs|> |\qq_2\cdot \uxs|>\cdots.
\]
Let us adapt the notation $R(\ux)$ from~\cite{ngm} 
to denote the minimum integer $R$ so that
some tail of the best approximations $(\qq_j)_{j\ge j_0}$ 
lies in an $R$-dimensional sublattice $\mathcal{L}=\mathcal{L}(\ux)$ of $\Z^{n+1}$ that may depend on $\ux$.
It is well-known that for $n\ge 2$ and totally irrational $\ux\in\R^n$, we have $R(\ux)\ge 3$. See~\cite[Theorem~1.2]{mosh} for the claim with its proof sketched in the same paper~\cite[\S~1.3]{mosh}, and~\cite[Theorem~7]{ngm} for generalizations to a system of linear forms. On the
other hand,  
Moshchevitin~\cite{mosh} showed that the following sets are not empty. 

\begin{definition}
	For $n\ge 2$ an integer, let $\Gamma_n$ be the set of all totally irrational $\ux\in\R^n$ inducing $R(\ux)=3$.
\end{definition}

By the above observation,
the set $\R^2\setminus \Gamma_2$ is a countable union of rational affine hyperplanes, 
hence we may assume $n\ge 3$. 
More precisely, denoting by $\dim_H$ and $\dim_P$
the Hausdorff and packing dimension respectively,
Moshchevitin's refinements~\cite[Theorems~12 \& 13]{ngm} 
of his own result directly 
imply the following fact. 

\begin{theorem}[Moshchevitin]  \label{MT}
	We have $\dim_P(\Gamma_n)\ge\dim_H(\Gamma_n)\ge n-2$.
\end{theorem}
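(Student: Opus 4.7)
The plan is to construct a Lebesgue-positive $(n-2)$-parameter family of vectors in $\Gamma_n$ and then invoke standard bounds on the Hausdorff dimension of Cartesian products/fibers. First I would fix any totally irrational pair $(\xi_1,\xi_2)\in\R^2$ whose best approximations $(p_j,q_j,r_j)\in\Z^3$ of the associated two-variable linear form have well-controlled growth (a badly approximable pair, say), and set $Q_j := \Vert(p_j,q_j)\Vert$ and $\epsilon_j := |p_j\xi_1+q_j\xi_2+r_j|$. I then extend to $\ux = (\xi_1,\xi_2,\alpha_3,\ldots,\alpha_n)\in\R^n$, viewing $(\alpha_3,\ldots,\alpha_n)$ as a free parameter ranging over a fixed box $B\subset \R^{n-2}$.

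The natural candidate $3$-dimensional sublattice is
\[
\mathcal{L} := \Z\,\ee_1+\Z\,\ee_2+\Z\,\ee_{n+1} \subset \Z^{n+1},
\]
into which the vectors $\qq^{(j)} := (p_j,q_j,0,\ldots,0,r_j)$ embed naturally; they attain the linear form value $\epsilon_j$ at $\Vert\hat\qq^{(j)}\Vert = Q_j$. The aim is to prescribe $(\alpha_3,\ldots,\alpha_n)$ so that these $\qq^{(j)}$ constitute the entire tail of the best-approximation sequence of $\ux$; combined with the universal lower bound $R(\ux)\ge 3$ already quoted in the introduction, this forces $\ux\in\Gamma_n$.

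This property fails exactly when some competing $\qq\in\Z^{n+1}$ with $(q_3,\ldots,q_n)\ne \0$ and $\Vert\hat\qq\Vert\le Q_j$ achieves $|\qq\cdot\uxs|<\epsilon_j$. Minimising over $q_{n+1}\in\Z$ reduces this to
\[
\Vert q_1\xi_1+q_2\xi_2+q_3\alpha_3+\cdots+q_n\alpha_n\Vert_{\Z} < \epsilon_j,
\]
which excludes $(\alpha_3,\ldots,\alpha_n)$ from a hyperplane slab of thickness $\ll \epsilon_j/\Vert(q_3,\ldots,q_n)\Vert$. A Borel--Cantelli estimate should then produce a full-measure subset of $B$ avoiding all but finitely many of these slabs; this admissible set has Hausdorff dimension $n-2$, and taking the Cartesian product with the singleton $\{(\xi_1,\xi_2)\}$ embeds it in $\Gamma_n$, yielding $\dim_H\Gamma_n\ge n-2$. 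The packing bound is automatic from $\dim_P\ge\dim_H$.

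The hard part will be controlling the total bad measure: the naive union bound over all competing $\qq$ with $\Vert\hat\qq\Vert\le Q_j$ diverges for $n\ge 4$. The remedy is to restrict to \emph{dangerous} pairs $(q_1,q_2)$ --- those for which $\Vert q_1\xi_1+q_2\xi_2\Vert_{\Z}$ itself is $O(\epsilon_j)$ --- since only such pairs can combine with a non-trivial $(q_3,\ldots,q_n)$ to push $|\qq\cdot\uxs|$ below $\epsilon_j$. For a suitably Diophantine base pair the count of dangerous pairs is $Q_j^{1+o(1)}$, making the bad measure at generation $j$ summable in $j$. Establishing this count cleanly, essentially a transference / geometry-of-numbers fact for the two-variable linear form, is what I expect to be the technical heart of the argument.
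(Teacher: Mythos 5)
Your overall architecture is the right one, and it is essentially what the paper does via Moshchevitin's result (quoted as Theorem~\ref{moschus}): fix a special pair $(\xi_1,\xi_2)$, embed its best approximation sequence into $\mathcal{L}=\scp{\ee_1,\ee_2,\ee_{n+1}}_{\Z}$, and run a Borel--Cantelli argument over the free parameters $(\alpha_3,\dots,\alpha_n)$ to kill all competing integer vectors with non-zero middle block. However, there is a concrete error in the key hypothesis on the base pair, and the remedy you sketch for the union-bound problem rests on a false premise.

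First, choosing $(\xi_1,\xi_2)$ \emph{badly approximable} cannot work. If the embedded vectors $\qq^{(j)}=(p_j,q_j,0,\dots,0,r_j)$ form a tail of the best approximations of $\ux\in\R^n$, then between consecutive ones the record value is $\epsilon_j=|p_j\xi_1+q_j\xi_2+r_j|$, so $\wo(\ux)$ is computed from the two-variable data and equals $\wo(\xi_1,\xi_2)$. But Dirichlet in $\R^n$ forces $\wo(\ux)\ge n$, whereas a badly approximable pair has $\wo(\xi_1,\xi_2)=2<n$ for $n\ge 3$. So the embedded sequence can never be the tail of best approximations for any extension $\ux$; the construction fails before measure theory even enters. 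The correct hypothesis, and the one the paper uses, is $\wo(\xi_1,\xi_2)>n$, which is precisely what makes the summability condition \eqref{eq:mc} in Theorem~\ref{moschus} hold, since then $\epsilon_j\ll\Vert\qq_{j+1}\Vert^{-n-\delta}$.

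Second, the ``dangerous pairs'' reduction is not valid. You claim only $(q_1,q_2)$ with $\Vert q_1\xi_1+q_2\xi_2\Vert_{\Z}=O(\epsilon_j)$ can produce a competitor; but for a \emph{fixed} box $B$ and any $(q_1,q_2)$ whatsoever, the set of $(\alpha_3,\dots,\alpha_n)\in B$ with $\Vert q_1\xi_1+q_2\xi_2+q_3\alpha_3+\cdots+q_n\alpha_n\Vert_{\Z}<\epsilon_j$ has positive measure $\asymp \epsilon_j/\Vert(q_3,\dots,q_n)\Vert$ once $(q_3,\dots,q_n)\ne\0$, since the added term sweeps an interval of length comparable to $\Vert(q_3,\dots,q_n)\Vert$. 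Thus the bad set is a slab for \emph{every} $(q_1,q_2)$, not only for the dangerous ones, and no saving occurs. The union bound at generation $j$ really is $\ll Q_j^{\,n-1}\epsilon_j$ (up to a logarithm for $n=3$), and this is where the requirement $\wo(\xi_1,\xi_2)>n$ does the work: then $\epsilon_j\ll Q_j^{-n-\delta}$ along the best approximation sequence, the norms grow geometrically, and the total bad measure is summable. Replacing ``badly approximable'' with ``$\wo(\xi_1,\xi_2)>n$'' repairs the proposal and makes it coincide with the paper's route through Theorem~\ref{moschus}, yielding a full-measure fiber in $\R^{n-2}$ and hence $\dim_H(\Gamma_n)\ge n-2$.

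Finally, a minor remark: for the bare bound $n-2$ one does not even need Marstrand's fiber lemma as you and the paper both note in passing; a single fiber of full $(n-2)$-dimensional Lebesgue measure already gives $\dim_H\ge n-2$. The fiber lemma is needed only for the sharper bounds of Theorem~\ref{G}, where the base set of admissible $(\xi_1,\xi_2)$ with $\wo>n$ has positive Hausdorff dimension.
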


In our first new result, relying
on auxiliary results from~\cite{ngm} and~\cite{dfsu1, dfsu2},
we determine the exact Hausdorff and packing dimension of 
the sets $\Gamma_n$.

\begin{theorem}  \label{G}
	The Hausdorff dimensions of $\Gamma_n$ are given as
	\begin{equation} \label{eq:mm}
	\dim_H(\Gamma_3)=\frac{17-\sqrt{13} }{8} = 1.6743\ldots ,\qquad   \dim_H(\Gamma_n)=n-2+\frac{2}{n}, \quad (n\ge 4),
	\end{equation}
	and their packing dimensions as
	\begin{equation} \label{eq:grim}
	\dim_P(\Gamma_n)\;= \; n-1, \qquad n\ge 3.
	\end{equation}
\end{theorem}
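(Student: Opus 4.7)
My plan is to parameterize $\Gamma_n$ by the tail sublattice $\mathcal{L}\subset \Z^{n+1}$, reduce each case to a lower-dimensional Diophantine exponent problem, and invoke the variational principle of Das--Fishman--Simmons--Urbanski \cite{dfsu1,dfsu2} to compute the dimensions.

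First, I would classify primitive $3$-dimensional sublattices $\mathcal{L}$ according to the rank of their projection $\hat{\mathcal{L}}$ onto the first $n$ coordinates, which is either $2$ or $3$. In the rank-$2$ case (B), after a unimodular change of basis in $\Z^{n+1}$ one may assume $\mathcal{L}$ forces $q_3=\cdots=q_n=0$, so the $\mathcal{L}$-approximations reduce to an inhomogeneous $2$-variable linear form in $\ux'=(\xi_1,\xi_2)$, with a free transverse part $\eta=(\xi_3,\ldots,\xi_n)\in\R^{n-2}$. In the rank-$3$ case (A), $\mathcal{L}$ is the graph of a rational linear functional over $\hat{\mathcal{L}}$, and after unimodular change and a rational translation of $\ux$ the problem reduces to a homogeneous $3$-variable linear form in some $\ux''\in\R^3$, with a free transverse part in $\R^{n-3}$ (trivial when $n=3$). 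Since there are only countably many such $\mathcal{L}$, a countable-union argument reduces the task to each standard model.

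Next, I would characterize in each case when the ambient best approximations of $\ux$ actually lie in $\mathcal{L}$ eventually: matching the ambient Dirichlet rate $\Vert\hat\qq\Vert^{-n}$ uniformly forces the reduced uniform exponent of $\ux'$ (Case B) or $\ux''$ (Case A) to be at least $n$, while the transverse coordinates must be Diophantinely non-degenerate, a full-Lebesgue condition. I would then invoke the variational principle of \cite{dfsu1,dfsu2} to compute the Hausdorff and packing dimensions of these exponent-level sets as extremal entropies over templates encoding the successive minima. In Case B, the entropy computation yields Hausdorff dimension $2/n$ and packing dimension $1$ for the $2$-variable component; combined with the $(n-2)$-dimensional free transverse factor via standard product and fiber dimension formulas, Case B contributes $n-2+2/n$ to the Hausdorff and $n-1$ to the packing dimension of $\Gamma_n$. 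In Case A, the analogous optimization on $4$-minima templates yields, at $n=3$ (where the transverse part is trivial), the exceptional value $(17-\sqrt{13})/8$ from a quadratic equation, slightly exceeding the Case B value $5/3$; for $n\ge 4$, a direct comparison shows Case A is dominated by Case B. Matching lower bounds follow from constructing explicit vectors realizing these extremal templates via continued-fraction or Jarnik-style recursions, combined with Moshchevitin's transverse freedom construction from \cite[Theorems~12 \& 13]{ngm}.

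The main obstacle is the entropy optimization in the variational principle: in particular, isolating the extremal template in the homogeneous $3$-variable problem at $n=3$ which gives rise to the unusual algebraic value $(17-\sqrt{13})/8$ via a quadratic equation, and carrying out a precise comparison with the Case B contribution to justify the case boundary exactly at $n=3$. A secondary difficulty is the careful handling of the countable union over sublattices $\mathcal{L}$, which requires uniform entropy bounds to prevent dimension inflation, and the application of product/fiber dimension formulas, especially for packing dimension, where realizing the full $n-1$ value requires finer control on the transverse component than a naive Cartesian product argument would give.
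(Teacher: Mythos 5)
Your overall architecture — countable union over sublattices $\mathcal{L}$, reduction to a $2$-variable uniform exponent set times a transverse factor, Marstrand/Tricot product formulas, and Moshchevitin's Theorem~12 from~\cite{ngm} for the lower bound — is on the right track and matches the paper's strategy for the generic case. But the key part of your proposal, the Case~A/Case~B dichotomy and the explanation of the exceptional $n=3$ value, is wrong.

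The value $(17-\sqrt{13})/8$ at $n=3$ does not come from a separate ``rank-$3$ projection'' case reducing to a homogeneous $3$-variable linear form and a $4$-minima template optimization. It comes directly from the same $2$-variable set $\{(\xi_1,\xi_2): \wo(\xi_1,\xi_2)\ge n\}$. The formula $\mathcal{V}(w)=\dim_H(\{(\xi_1,\xi_2): \wo>w\})=2/w$ from~\cite[\S~3.3]{dfsu2} holds only for $w\ge 2+\sqrt{2}\approx 3.414$; for $w<2+\sqrt{2}$ (in particular $w=3$) there is a different, more complicated closed formula, which evaluates at $w=3$ to $(9-\sqrt{13})/8\approx 0.674$, strictly greater than $2/3$. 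Adding $n-2=1$ yields $(17-\sqrt{13})/8$. Thus your ``Case~B gives $5/3$ at $n=3$'' is already incorrect (it gives $(17-\sqrt{13})/8$), and the entire ``Case~A dominates Case~B precisely at $n=3$'' mechanism is an artifact of misapplying $2/w$ below its range of validity. There is no $3$-variable homogeneous entropy optimization anywhere in the correct argument.

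Relatedly, the paper does not need your rank-$2$/rank-$3$ classification of $\hat{\mathcal{L}}$ at all. All $3$-dimensional sublattices $\mathcal{L}$ are handled by one uniform device: a rational automorphism $f_{\mathcal{L}}$ sending $\mathcal{L}$ to $\scp{\ee_1,\ee_2,\ee_{n+1}}_{\Z}$, its adjoint inverse $g_{\mathcal{L}}$ acting on $\R^{n+1}$, and the locally bi-Lipschitz normalization $\Delta$ that divides by the last coordinate (falling back to the identity on the hyperplane $x_{n+1}=0$). Because $f_{\mathcal{L}}(\qq)$ always lands in $\scp{\ee_1,\ee_2,\ee_{n+1}}_{\Z}$, the transported linear form only involves the first two coordinates of the transformed point, so every sublattice reduces to the \emph{same} $2$-variable constraint $\wo(\zeta_1,\zeta_2)\ge n$ (appropriately normalized), giving identically the bound $n-2+\dim_H(\{\wo\ge n\})$ for every $\mathcal{L}$. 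Your proposed Case~A would require you to actually establish that rank-$3$ sublattices contribute no more than this common bound, which you assert but do not argue; the automorphism reduction makes this automatic.

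A further caution: your claim that a rank-$3$ primitive $\mathcal{L}$ is necessarily ``the graph of a rational linear functional over $\hat{\mathcal{L}}$'' so that a rational translation homogenizes the problem glosses over the index of $\hat{\mathcal{L}}$ inside $\Z^n$ and is not obviously true in the form you state it. This is another place where the clean automorphism $+$ $\Delta$ argument sidesteps a genuine subtlety that your plan would have to confront.
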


The lower bounds follow relatively easily from combining observations from~\cite{dfsu1, dfsu2, ngm}, together with some metrical theory of Cartesian products. The upper bounds
require more work, especially the three-dimensional sublattice
in the definition of $\Gamma_n$ being arbitrary causes our proof to 
become technical. 
Define the uniform exponent of approximation
with respect to a linear form as
\[
\wo(\ux)= \sup \{ t>0: \limsup_{Q\to\infty} \min_{\0\ne \Vert \hat\bb\Vert\le Q} Q^t|\bb\cdot \uxs|<\infty  \},
\]
and the ordinary exponent of approximation
\[
\om(\ux)= \sup \{ t>0: \liminf_{Q\to\infty} \min_{\0\ne \Vert \hat\bb\Vert\le Q} Q^t|\bb\cdot \uxs|<\infty  \}.
\]
Then by Dirichlet's Theorem for any $\ux\in\R^n$ we have
\begin{equation} \label{eq:diri}
\om(\ux)\ge \wo(\ux)\ge n.
\end{equation}
Generalizing 
the proof strategy of Theorem~\ref{G}, we can obtain similar 
results on best approximations in sublattices of higher dimension $k$.
It turns out that the packing dimension of the accordingly defined sets does not increase up to $k=n$.

\begin{theorem} \label{N}
	Let $3\le k\le n$ be integers and
	let $\Gamma_{n,k}:=\{ \ux\in\R^n: R(\ux)\le k \}$. Then
	\begin{equation} \label{eq:rhS}
	\dim_P(\Gamma_{n,k})= n-k+1+ \dim_P(\{ \ux\in\R^{k-1}: \wo(\ux)\ge n \})=n-1.
	\end{equation}
	Moreover
		\begin{equation} \label{eq:RHs}
	\dim_H(\Gamma_{n,k})\ge  n-k+1+ \dim_H(\{ \ux\in\R^{k-1}: \wo(\ux)>n  \}), 
	\end{equation}
	and conversely
		\begin{equation} \label{eq:Rhs}
	\dim_H(\Gamma_{n,k})\le  n-k+1+ \dim_H(\{ \ux\in\R^{k-1}: \wo(\ux)\ge n  \}). 
	\end{equation}
\end{theorem}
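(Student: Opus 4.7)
The plan is to generalize the method used for Theorem~\ref{G} (which is the case $k=3$) to arbitrary $k\in[3,n]$, by decomposing $\Gamma_{n,k}$ as a countable union over $k$-dimensional sublattices $\mathcal{L}\subset\Z^{n+1}$ and expressing each piece as an essentially product-type set with a ``well-approximable'' $(k-1)$-dimensional factor and a generic $(n-k+1)$-dimensional complementary factor, separated by a suitable linear projection.

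For the lower bounds \eqref{eq:rhS} and \eqref{eq:RHs} I would argue constructively. Given $\uy\in\R^{k-1}$ with $\wo(\uy)>n$ (respectively $\wo(\uy)\ge n$ for the packing version), and a badly approximable $\alpha\in\R^{n-k+1}$ whose ordinary approximation exponent equals the minimal value $n-k+1$, set $\ux=(\uy,\alpha)\in\R^n$. The best approximations of $\uy$ lift, by inserting zeros in the coordinates indexed by $k,\ldots,n$, to integer vectors approximating $\uxs$ at rate $\Vert\hat\qq\Vert^{-\wo(\uy)+o(1)}$. Adapting Moshchevitin's analysis used for Theorem~\ref{G}, the bad approximability of $\alpha$ rules out any competitive ``mixed'' approximation, so that for $Q$ sufficiently large every best approximation of $\ux$ of norm at most $Q$ must vanish in the coordinates $k,\ldots,n$. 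Hence $R(\ux)\le k$. Combining this inclusion with the standard product inequalities for Hausdorff and packing dimension, and with the full Hausdorff and packing dimension $n-k+1$ of the set of badly approximable vectors in $\R^{n-k+1}$, yields the stated lower bounds.

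For the upper bound \eqref{eq:Rhs}, I would fix a $k$-dimensional sublattice $\mathcal{L}\subset\Z^{n+1}$, choose a $\Z$-basis, and from it construct an explicit linear surjection $\pi\colon\R^n\to\R^{k-1}$ with affine fibers $F$ of dimension $n-k+1$ such that any $\ux$ whose best approximations eventually lie in $\mathcal{L}$ satisfies $\wo(\pi(\ux))\ge n$: the Dirichlet rate of $\ux$ is transmitted through the lattice constraint to the projection. The Marstrand slicing inequality $\dim_H(A)\le\dim_H(\pi(A))+\dim_P(F)$, followed by a countable union over $\mathcal{L}$, then produces \eqref{eq:Rhs}. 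The analogous packing-dimension argument reduces the computation to the known identity $\dim_P\{\uy\in\R^{k-1}:\wo(\uy)\ge n\}=k-2$ valid for $n\ge k$, available through the variational principle of~\cite{dfsu1, dfsu2} applied to the uniform approximation exponent; adding $n-k+1$ yields exactly $n-1$ and confirms \eqref{eq:rhS}. The main obstacle, as in Theorem~\ref{G}, is the upper bound: the arbitrariness of $\mathcal{L}$ demands delicate linear-algebraic bookkeeping to extract the correct projection $\pi$ carrying the full uniform exponent, and the complexity of this bookkeeping grows with $k$.
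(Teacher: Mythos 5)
Your lower-bound argument is a genuinely different route from the paper's, and it has a gap. You propose to take $\uy\in\R^{k-1}$ with $\wo(\uy)>n$ (resp. $\ge n$) and couple it with a badly approximable $\alpha\in\R^{n-k+1}$ with $\om(\alpha)=n-k+1$, and claim that bad approximability of $\alpha$ alone forces the best approximations of $\ux=(\uy,\alpha)$ to eventually vanish in the coordinates $k,\ldots,n$. This is not established, and the paper's own structural result (Theorem~\ref{neu}, the case $k=3$) shows that this explicit approach via $\om(\xi_3,\ldots,\xi_n)=n-2$ requires the considerably stronger hypothesis $\wo(\xi_1,\xi_2)>3n-4$ (see also Remark~\ref{r9}); nothing in the paper suggests the threshold can be lowered to $n$ for such an explicit fiber set. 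Consequently your constructive Cartesian product would only yield $n-k+1+\dim_H\{\uy:\wo(\uy)>3n-4\}$ (or the analogous bound for general $k$), strictly weaker than the claimed \eqref{eq:RHs}. The paper instead obtains the sharp threshold $n$ by invoking Moshchevitin's probabilistic Theorem~\ref{moschus} from~\cite{ngm}, which produces, for each $\uy$ with $\wo(\uy)>n$, a full-measure (but implicit, not Diophantine-characterized) set $F_{n,k}(\uy)\subseteq\R^{n-k+1}$ of good completions, and then applies parts (I) and (II) of the Marstrand--Tricot Lemma~\ref{Mars} to the resulting fibered set $\widetilde{\Gamma}_{n,k}$. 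Since $F_{n,k}(\uy)$ depends on $\uy$ and has zero measure complement that you cannot describe Diophantinely, the fiber version of the dimension inequality is needed, not just a product inequality.

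Your upper-bound sketch is essentially the paper's: containment in $\{\uy\in\R^{k-1}:\wo(\uy)\ge n\}\times\R^{n-k+1}$ for the special lattice $\mathcal{L}_{n,k}$ via Dirichlet, Tricot's inequality, the identity $\dim_P\{\uy\in\R^{k-1}:\wo(\uy)\ge n\}=k-2$ from~\cite{dfsu2}, and a rational-automorphism reduction (as in~\S~\ref{genc}) to handle an arbitrary $k$-dimensional sublattice $\mathcal{L}$. The linear-algebraic step you describe vaguely is carried out concretely in the paper by extending a $\Z$-basis of $\mathcal{L}$ to an integer basis of $\R^{n+1}$, taking the adjoint of the inverse, and pushing forward through the dehomogenization map $\Delta$; you would need to spell this out rather than appeal to ``delicate bookkeeping.''
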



We implicitly assume any $\ux\in \Gamma_{n,k}$ to be totally irrational.
For $k=3$ we have $\Gamma_{n,3}=\Gamma_n$ and Theorem~\ref{N} becomes Theorem~\ref{G} via formulas from~\cite{dfsu1, dfsu2} (see also~\cite{bcc}).
The precise value for the packing dimension
in \eqref{eq:rhS} is evaluated via~\cite[Theorems~3.8 \& 4.9]{dfsu2},
for $k\ge 4$ the right hand sides
in \eqref{eq:RHs}, \eqref{eq:Rhs} are unknown, see~\cite{dfsu1, dfsu2} for approximative results. We strongly conjecture that 
for any $k\le n$ they coincide,
thereby leading to equalities in \eqref{eq:RHs}, \eqref{eq:Rhs}, however this seems not yet settled. 
Clearly a sufficient condition is
that the map 
$w\longmapsto \dim_H(\{ \ux\in\R^{k-1}: \wo(\ux)\ge w  \})$
is continuous for $w>k-1$ (it is discontinuous at $w=k-1$, 
see~\cite{dfsu1, dfsu2}, but conjecturally this is the only discontinuity).

The main focus of this paper is to study
the following problem on best approximations for a linear form.

\begin{problem}  \label{p1}
	For $n\ge 2$, does there exist totally irrational $\ux\in \R^n$ so that some tail of best approximations $(\qq_j)_{j\ge j_0}$ lies in a finite union of two-dimensional sublattices of $\Z^{n+1}$?
	If so, determine the minimum possible number $N=N(n)$ of sublattices. Determine/Estimate the Hausdorff and packing 
	dimensions of these sets as functions of $n$.
\end{problem}

Clearly $N\ge 2$ for $n\ge 2$ by the fact $R(\ux)\ge 3$ for totally
irrational $\ux$ recalled above.

\begin{remark}  \label{reh}
	For simultaneous approximation, the answer is negative, as any hyperplane
	contains only finitely many best approximations as soon as $\ux$ 
	is totally irrational. Moreover, in either linear form or simultaneous approximation problem, clearly any one-dimensional subspace contains at most one best approximation vector
	for given $\ux$,
	if so its (up to sign) unique integer vector with coprime coefficients.
\end{remark}

\begin{remark}  \label{hire}
	As pointed out to the author in private communication by N.G. Moshchevitin, with some effort a positive answer to
	Problem~\ref{p1} (omitting the metrical aspects) 
	with the optimal constant $N=2$
	can be derived from the lemma in~\cite{kmo} and its proof.
    This lemma essentially states the following:
    Let $G\subseteq \Z^n$ be a set of integer vectors that is not contained in a set of the form $\ell \cup F$, where $\ell$ is a line and $F$ a finite set, in $\R^n$. Then there is totally irrational $\ux\in\R^n$ such that for integer vectors 
    within the set
    $G^{\ast}=\{(g, h): g=(g_1,\ldots,g_n)\in G, h\in\Z\}\subseteq \Z^{n+1}$ we find very small linear forms
   $|\qq\cdot \uxs|$ for certain $\qq\in G^{\ast}$ (implying
   $\hat\qq\in G$), occurring
    with some density that in particular admits to ask for $\wo(\ux)=\infty$.
    Taking $G$ the union of two non-collinear rational
    one-dimensional sublattices (lines) $\ell_1, \ell_2$ 
    of $\Z^n$, its embedding $G^{\ast}$ in $\Z^{n+1}$
    lies in
    the union of the two two-dimensional sublattices $\scp{\ell_1, \ee_{n+1}}_{\Z}$ and $\scp{\ell_2, \ee_{n+1}}_{\Z}$ of $\Z^{n+1}$.
    As pointed out to the author by N.G. Moshchevitin, with some cumbersome additional geometrical
    arguments one can guarantee that these integer points indeed form
    some tail of the best approximations associated to $\ux$.
    However, this is not explicitly carried out in the short note~\cite{kmo} (nor in later work) as it was not of relevance
    in that paper. In our alternative construction regarding Problem~\ref{p1} below (proof of Theorem~\ref{H}), we fix the lines
    $\ell_i=\scp{\ee_i}_{\Z}, i=1,2$ as the first two coordinate axes
    of $\R^n$ and provide an explicit, rather elementary argument of this fact for certain $\ux$, based on Minkowski's Second Convex Body Theorem. Moreover, we address the metrical problem.
\end{remark}

\section{On Problem~\ref{p1}}

\subsection{Main new results}
As indicated in Remark~\ref{hire},
we give an almost complete answer to Problem~\ref{p1}. Indeed,
we show that for arbitrary $n$ and the maximum norm 
the answer is positive, and the optimal bound
$N(n)=2$ can be reached for any $n\ge 2$. Moreover,
we may choose the two two-dimensonal sublattices so that they
span a lattice of dimension only three in $\Z^{n+1}$.
Thereby we recover Moshchevitin's result that $R(\ux)=3$ can be reached, i.e. $\Gamma_n\ne \emptyset$, with a new proof, 
that we consider easier than the original one from~\cite{mosh}.

To state our result in full generality, we need to introduce some notation.
Let us first define the two-dimensional sublattices of $\Z^{n+1}$ given by
\[
\mathcal{H}_i= \scp{\ee_i, \ee_{n+1}}_{\mathbb{Z}}, \qquad 1\le i\le n,
\]
i.e. the $\mathbb{Z}$-span of the $i$-th and the $(n+1)$-st canonical base vector
in $\R^{n+1}$. For the immediate concern of Problem~\ref{p1}, 
we will only need $\mathcal{H}_1$ and $\mathcal{H}_2$.
Let us define the following properties of $\ux\in\R^n$
with induced sequence of best approximations $(\qq_j)_{j\ge 1}$: 
	\begin{itemize}
	\item[(i)] $\ux$ is totally irrational
	\item[(ii)] Some tail
	$(\qq_j)_{j\ge j_0}$ lies in the union
	of the two-dimensional sublattices
	$\mathcal{H}_1$ and $\mathcal{H}_2$	of $\Z^{n+1}$. 
	\item[(iii)] Some tail $(\qq_j)_{j\ge j_0}$
	lies in the three-dimensional sublattice 
	\[
	\scp{\mathcal{H}_1\cup \mathcal{H}_2}_{\R}\cap \Z^{n+1}= \scp{\ee_1,\ee_2,\ee_{n+1}}_{\Z}
	\]
	of $\Z^{n+1}$. In particular $R(\ux)=3$.
	\item[(iv)] Large best approximations lie in $\mathcal{H}_1$ and $\mathcal{H}_2$ alternatingly, i.e. for any $j\ge j_0$ 
	\[
	\qq_{2j}\in \mathcal{H}_1, \quad \qq_{2j+1}\in \mathcal{H}_2 \qquad\quad \text{or} \qquad \quad \qq_{2j}\in \mathcal{H}_2, \quad \qq_{2j+1}\in \mathcal{H}_1.
	\]
	\item[(v)] For $j\ge j_0$, three consecutive best approximations
	$\qq_j, \qq_{j+1}, \qq_{j+2}$ are linearly independent, thus 
	$\scp{\qq_j, \qq_{j+1}, \qq_{j+2}}_{\R}\cap \Z^{n+1}$ has full dimension three in $\scp{\ee_1,\ee_2,\ee_{n+1}}_{\Z}$
	\item[(vi)] For $w\in[n,\infty]$, we have
	\[
	\wo(\ux)=w, \qquad \om(\ux)=w^2.
	\]
\end{itemize}

We comment on the conditions and their mutual relations in~\S~\ref{SS} below. We want to remark that we expect
Theorem~\ref{H} below to remain true when replacing $\ee_1$ in $\mathcal{H}_1$ and $\ee_2$ in
$\mathcal{H}_2$ by any pair of linearly independent integer vectors in the 
two-dimensional subspace of $\R^{n+1}$ defined by $x_3=\cdots=x_{n+1}=0$. This is supported by~\cite{kmo}, see Remark~\ref{hire} above. 

\begin{definition}
	Let $n\ge 2$ be an integer. Define $\Theta^n\subseteq \R^n$  
	as the set of $\ux\in\R^n$ for which conditions (i)-(v) hold.
	For $w\in [n,\infty]$,
	define further $\Theta_n(w)\subseteq \R^{n}$ as the set of $\ux\in\R^n$ for which (i)-(vi) hold. 
\end{definition}

  Clearly
\begin{equation} \label{eq:absaty}
\Gamma_n\supseteq \Theta^n\supseteq \bigcup_{w\in[n,\infty] } \Theta_n(w), \qquad n\ge 2.
\end{equation}
We can now finally state the following rather satisfactory 
partial answer to
Problem~\ref{p1}. 

\begin{theorem}  \label{H}
Let $n\ge 2$ and $w\in [n,\infty]$. Then 
\begin{equation} \label{eq:theclaim}
\dim_H(\Theta_n(w)) \ge n-2+\frac{1}{w^2+1}\;,\qquad\quad \dim_H(\Theta^n) > n-2+\frac{1}{n^2+1}. 
\end{equation}
Moreover 
\begin{equation} \label{eq:DREI}
\dim_P(\Theta^2) \ge 1,\qquad\quad  \dim_P(\Theta^n) = n-1, \quad (n\ge 3).
\end{equation}
In particular $\Theta^n\ne \emptyset$ and $N=2$ can be reached
in Problem~\ref{p1}.
Conversely, we have 
\begin{equation}  \label{eq:zzwei}
\dim_H(\Theta^2) \leq \frac{7}{5}
\end{equation}
and
\begin{equation}  \label{eq:uc}
\dim_H(\Theta^{3})\le \frac{17-\sqrt{13}}{8},\qquad\quad
\dim_H(\Theta^n) \leq n-2+\frac{2}{n} \quad (n\ge 4).
\end{equation}
\end{theorem}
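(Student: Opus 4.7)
The plan is to split the proof into three parts: an explicit inductive construction in $\R^2$ realizing conditions (i)-(vi) with prescribed uniform exponent $w\in[n,\infty]$; an amplification step lifting such vectors to $\Theta^n$ and $\Theta_n(w)$ for $n\ge 3$; and the upper bounds, which for $n\ge 3$ are immediate from \eqref{eq:absaty} together with Theorem~\ref{G}, so that only \eqref{eq:zzwei} for $n=2$ requires a separate argument.

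For the construction in $\R^2$, I would build $\xi_1,\xi_2$ simultaneously by an inductive procedure on their two continued fraction expansions, alternately extending one of them so that the convergents of $\xi_1$ and $\xi_2$ interleave in denominator with a prescribed growth ratio depending on $w$. The ratios are calibrated so that $\|\hat\qq_{j+1}\|\asymp\|\hat\qq_j\|^{w^2}$ and $|\qq_j\cdot\uxs|\asymp\|\hat\qq_{j+1}\|^{-1}$, which forces $\om(\ux)=w^2$ and makes $\wo(\ux)=w$ realizable. The crucial point is to verify that this alternating sequence is the complete sequence of best approximations, not merely a subsequence of it. For this I would apply Minkowski's Second Convex Body Theorem to the parallelepipeds
\[
K(Q,\varepsilon)=\{\vv\in\R^3:\ \|\hat\vv\|\le Q,\ |\vv\cdot\uxs|\le \varepsilon\}
\]
at appropriately chosen $(Q,\varepsilon)$: the prescribed separation of scales forces the first two successive minima to be realized by the two most recent convergent-type vectors from $\mathcal{H}_1$ and $\mathcal{H}_2$, leaving no room for a mixed integer vector to compete. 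Condition (v) is then automatic from the linear independence of these two vectors with any later best approximation.

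For the Hausdorff lower bound of \eqref{eq:theclaim} with $n=2$, I would retain at each inductive stage a block of roughly $\|\hat\qq_j\|^{w^2-1}$ admissible partial quotients, then run a standard mass distribution argument on the resulting Cantor-type set; the Jarn\'ik--Besicovitch style ledger produces the exponent $1/(w^2+1)$. To pass to $n\ge 3$, I would append coordinates $(\alpha_3,\ldots,\alpha_n)$ chosen from a badly-approximable set of Hausdorff (respectively packing) dimension $n-2$, so that any integer vector with some $q_i\ne 0$ for $i\ge 3$ yields a linear-form value bounded away from $0$ and cannot compete with the tiny linear forms coming from the $(\xi_1,\xi_2)$-pair; conditions (i)-(vi) then transfer to the extended vector, and the classical Cartesian-product and fiber inequalities for both dimensions give \eqref{eq:theclaim} and the lower half of \eqref{eq:DREI}. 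The strict inequality in the second part of \eqref{eq:theclaim} follows by taking $w$ slightly above $n$. The packing upper bound in \eqref{eq:DREI} and the bounds \eqref{eq:uc} are immediate from $\Theta^n\subseteq \Gamma_n$ together with \eqref{eq:grim} and \eqref{eq:mm}.

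For \eqref{eq:zzwei} I would cover $\Theta^2$ by the level sets $\{\wo(\ux)=w\}\cap\Theta^2$ for $w\ge 2$, bound each via a Jarn\'ik--Besicovitch count exploiting the forced regime $\om(\ux)=w^2$ coming from (v) and the alternation in (iv), and then optimize over $w$; the resulting optimum $7/5$ arises from summing the individual dimensional contributions of $\xi_1$ and $\xi_2$ under this constraint. The principal technical obstacle throughout is the best-approximation verification in the base construction: upgrading the statement ``our alternating sequence contains a tail of best approximations'' to ``\emph{every} best approximation in that tail is of this form'' requires tight quantitative control of the Minkowski picture at every scale, and this is where the bulk of the work will concentrate.
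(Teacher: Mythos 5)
Your overall architecture (base construction in $\R^2$, amplification to general $n$, separate upper bound for $n=2$) matches the paper, but there are genuine gaps in two of the three pieces.

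\textbf{The amplification step does not work as stated.} You propose to append $(\xi_3,\ldots,\xi_n)$ from a badly approximable set so that ``any integer vector with some $q_i\ne 0$ for $i\ge 3$ yields a linear-form value bounded away from $0$.'' Writing $\theta=q_1\xi_1+q_2\xi_2$, you would need a \emph{uniform inhomogeneous} lower bound for $|\theta+q_3\xi_3+\cdots+q_n\xi_n+q_{n+1}|$ over the countably many shifts $\theta$ produced by the construction. Dual badly approximable $(\xi_3,\ldots,\xi_n)$ does not grant such a uniform bound for arbitrary shifts; for every fixed such vector there exist shifts making the inhomogeneous form arbitrarily small relative to the norm, and you have no control on whether the $\theta$ arising from $q_1\xi_1+q_2\xi_2$ avoid these bad shifts. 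The paper sidesteps this entirely by invoking Moshchevitin's result \cite[Theorem~12]{ngm} (recalled as Theorem~\ref{moschus}): under a summability condition implied by $\wo(\xi_1,\xi_2)>n$, the embedded best approximations of $(\xi_1,\xi_2)$ form the tail of best approximations of $\ux$ for Lebesgue-a.e.\ $(\xi_3,\ldots,\xi_n)\in\R^{n-2}$, and then Lemma~\ref{Mars} is applied to this full-measure fiber. (Theorem~\ref{neu} does give a non-metrical alternative via Minkowski's theorem with explicit auxiliary lattice vectors, but it requires the stronger hypothesis $\wo(\xi_1,\xi_2)>3n-4$ and is not part of the proof of Theorem~\ref{H}.)

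\textbf{Minkowski alone cannot verify the best-approximation property in the base case.} If a candidate competitor $\bb$ lies in the $\R$-span of two consecutive vectors $\vv_j\in\mathcal H_1$ and $\w_j\in\mathcal H_2$, then $\bb,\vv_j,\w_j$ are linearly dependent and Minkowski's second theorem sees nothing. What kills such $\bb=g\vv_j+h\w_j$ is arithmetic, not scale separation: the paper enforces coprimality (choosing $A_j,B_j$ coprime prime powers, Proposition~\ref{prop}; or the CRT argument for \eqref{eq:vieE} in \S~\ref{s4}) to show $\scp{\vv_j,\w_j}_\R\cap\Z^{n+1}=\scp{\vv_j,\w_j}_\Z$, whence $h\ne 0$ forces $\Vert\hat\bb\Vert\ge B_j$. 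Your inductive continued-fraction construction can be made to serve, but only after this coprimality hurdle is addressed; the paper's \S~\ref{s4} does exactly that, and the dimension count $1/(\tau^2+1)$ is then extracted via Sun's formula / the variational principle, not an ad hoc mass distribution.

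\textbf{The upper bound premise is wrong.} Conditions (iv), (v) do not force $\om(\ux)=\wo(\ux)^2$; the \S~\ref{s1} construction already produces $\om(\ux)=\tau^2-1$ while $\wo(\ux)=(\tau^2-1)/\tau$, so $\om\ne\wo^2$ there. The paper's argument for \eqref{eq:zzwei} is instead that alternation plus Dirichlet and the theory of continued fractions force $\min\{\om(\xi_1),\om(\xi_2)\}\ge 4$ pointwise on $\Theta^2$, giving $\Theta^2\subseteq\{\om(\xi_1)\ge 4\}\times\R$ and hence, via Jarn\'ik's formula and \eqref{eq:yz}, the bound $1+2/5=7/5$. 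Your optimization over $\wo$-level sets rests on a relation that does not hold.
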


In view of \eqref{eq:absaty}, estimates \eqref{eq:uc} are an obvious consequence of \eqref{eq:mm}
from Theorem~\ref{G}. The bound \eqref{eq:zzwei} can be improved
with some effort, see Remark~\ref{RR} below. On the other hand,
we are unable to provide a non-trivial upper bound for $\dim_P(\Theta^2)$. Comparing the bounds of Theorem~\ref{G} and
Theorem~\ref{H} yields the following corollary.

\begin{corollary}
	We have
	$\dim_H(\Gamma_n)- \dim_H(\Theta^n)=o(1)$ as $n\to\infty$.
	We have $\dim_P(\Gamma_n)=\dim_P(\Theta^n)$ for any $n\ge 3$. Moreover 
	$2=\dim_H(\Gamma_2)>\dim_H(\Theta^2)$.
\end{corollary}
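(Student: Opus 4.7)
The plan is simply to verify the three assertions by unpacking the quantitative bounds already established in Theorems~\ref{G} and~\ref{H}; no new argument is required, and the corollary serves as a summary of how closely the ``union of two sublattices'' set $\Theta^n$ tracks the full set $\Gamma_n$.

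For the first claim, I would start from \eqref{eq:mm}, which for $n\ge 4$ gives $\dim_H(\Gamma_n)=n-2+\tfrac{2}{n}$, and combine it with the two-sided bound
\[
n-2+\frac{1}{n^2+1} \;<\; \dim_H(\Theta^n) \;\le\; n-2+\frac{2}{n},
\]
whose left inequality is the second estimate in \eqref{eq:theclaim} and whose right inequality is the $n\ge 4$ bound in \eqref{eq:uc}. Subtracting yields
\[
0 \;\le\; \dim_H(\Gamma_n)-\dim_H(\Theta^n) \;<\; \frac{2}{n}-\frac{1}{n^2+1},
\]
and the right-hand side is $O(n^{-1})$, hence in particular $o(1)$ as $n\to\infty$.

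The second claim is immediate: for $n\ge 3$, equation \eqref{eq:grim} asserts $\dim_P(\Gamma_n)=n-1$, while the second half of \eqref{eq:DREI} asserts $\dim_P(\Theta^n)=n-1$, and the two values coincide.

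For the final assertion I would invoke the observation recorded at the start of \S\ref{intro} that $\R^2\setminus\Gamma_2$ is a countable union of rational affine lines, so $\dim_H(\Gamma_2)=2$; on the other hand \eqref{eq:zzwei} gives $\dim_H(\Theta^2)\le 7/5<2$, establishing the strict inequality. There is no substantial obstacle here; the only mild subtlety is that the cases $n=2,3$ sit outside the clean asymptotic regime and must be treated with the separate bounds in \eqref{eq:uc} and \eqref{eq:zzwei} rather than via the uniform formula in \eqref{eq:mm}.
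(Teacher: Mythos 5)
Your proof is correct and takes exactly the approach the paper intends: the corollary is stated immediately after the remark ``Comparing the bounds of Theorem~\ref{G} and Theorem~\ref{H} yields the following corollary,'' and you carry out precisely that comparison, reading off \eqref{eq:mm}, \eqref{eq:theclaim}, \eqref{eq:uc}, \eqref{eq:grim}, \eqref{eq:DREI}, \eqref{eq:zzwei}, together with the fullness of $\Gamma_2$. Nothing to add.
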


The following problem remains open.

\begin{problem}
	For $n\ge 3$, is the inequality $\dim_H(\Gamma_n)\ge \dim_H(\Theta^n)$ strict? 
\end{problem}

An explicit lower bound slightly exceeding
$n-2+1/(n^2+1)$ for the Hausdorff dimension of $\Theta^n$
can be readily deduced from the proof of Theorem~\ref{H} below,
see formula \eqref{eq:taurin}. 
Inserting small $n$ in this strengthened bound \eqref{eq:taurin},
the decimal expansions start with
\begin{equation} \label{eq:werte}
\dim_H(\Theta^2)\ge 0.2023\ldots, \quad 
\dim_H(\Theta^{3})\ge 1.1009\ldots, \quad 
\dim_H(\Theta^4)\ge 2.0590\ldots.
\end{equation}
See also Remark~\ref{hhh} below on bounds for Hausdorff or packing dimension when additionally restricting
$\wo(\ux)$, complementing the left estimate of \eqref{eq:theclaim}.
Our method suggests the
following refinements of \eqref{eq:theclaim} (and \eqref{eq:taurin}).

\begin{conjecture}  \label{c1}
	For $n\ge 2$ and $w\in[n,\infty]$ we have
	\[
	\dim_H(\Theta_n(w)) \ge n-2+\frac{2}{w^2+1},\qquad \dim_H(\Theta^n) > n-2+\frac{2}{n^2+1}.
	\]
\end{conjecture}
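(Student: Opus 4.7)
The plan is to upgrade the one-parameter construction underlying Theorem~\ref{H} into a genuinely independent two-parameter Cantor-type construction in the first two coordinates, so that both $\xi_1$ and $\xi_2$ each contribute $1/(w^2+1)$ to the Hausdorff dimension rather than only one of them.

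First I would fix the scale skeleton imposed by (iv) and (vi). If $A_j$ and $B_j$ denote the successive norms of best approximations lying in $\mathcal{H}_1$ and $\mathcal{H}_2$ respectively, then $\wo(\ux)=w$ and $\om(\ux)=w^2$ together with the alternation (iv) force $B_j\asymp A_j^w$ and $A_{j+1}\asymp B_j^w\asymp A_j^{w^2}$ up to subdominant factors, so $A_j$ grows doubly exponentially in $w^2$. Second, at each stage $j$ I would choose \emph{independently} a reduced fraction $p_j/A_j$ constraining $\xi_1$ to an interval of radius $\asymp A_j^{-w^2-1}$, and a reduced fraction $r_j/B_j$ constraining $\xi_2$ to an interval of radius $\asymp B_j^{-w^2-1}$, while letting $(\xi_3,\dots,\xi_n)$ range freely in an $(n-2)$-dimensional auxiliary set. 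A standard Jarnik-type mass distribution along the prescribed lacunary denominator sequence assigns each axis of the resulting Cantor-type set in the $(\xi_1,\xi_2)$-plane Hausdorff dimension $1/(w^2+1)$, so a product-of-Cantor-sets computation, combined with the fiber and Cartesian product dimension estimates already invoked in the proof of Theorem~\ref{H}, delivers the lower bound $n-2+2/(w^2+1)$ for $\dim_H(\Theta_n(w))$. The strict inequality for $\dim_H(\Theta^n)$ at $w=n$ should then follow by exploiting one additional layer of freedom, either in the initial scale $A_1$ or via a slightly perturbed exponent $w$ close to $n$, in the same spirit as the derivation of the refined bound \eqref{eq:taurin} from the proof of Theorem~\ref{H}.

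The hard part is verifying that after the independent choice of $p_j$ and $r_j$, conditions (ii)--(v) still hold. In the existing proof of Theorem~\ref{H} the second best-approximation sequence is tightly slaved to the first; here I need to rule out, for generic admissible pairs $(p_j,r_j)$, the existence of an off-lattice integer vector $(a,b,0,\dots,0,c)\in\Z^{n+1}$ with $ab\ne 0$ producing $|a\xi_1+b\xi_2+c|$ smaller than the diagonal errors along $\mathcal{H}_1\cup\mathcal{H}_2$ at the relevant norm scale. The natural tool is again Minkowski's Second Convex Body Theorem applied to the lattice $\scp{\ee_1,\ee_2,\ee_{n+1}}_{\Z}$ intersected with the slab $\{|\qq\cdot\uxs|\le\eta,\ \|\hat\qq\|\le Q\}$, showing that at consecutive scales $A_j$ and $B_j$ the first two successive minima are realized by vectors of $\mathcal{H}_1$ and $\mathcal{H}_2$ respectively.

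The principal obstacle is a quantitative avoidance lemma: one must bound the Hausdorff dimension of the exceptional set of pairs $(\xi_1,\xi_2)$ for which some cross-vector with $ab\ne 0$ attains a competing minimum. Heuristically this bad set has dimension strictly smaller than $2/(w^2+1)$ because each cross-condition is a codimension-one constraint coupling the two coordinates, but turning this heuristic into a rigorous inductive exclusion consistent with the alternation (iv) at \emph{all} scales simultaneously, probably via the variational principle and its consequences from \cite{dfsu1,dfsu2,ngm}, is exactly what places the statement in the realm of a conjecture rather than a theorem at present.
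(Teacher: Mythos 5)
The statement you are trying to prove is Conjecture~\ref{c1}; the paper does not prove it, and your proposal correctly recognizes this. What can be compared is your heuristic against the paper's own heuristic, which is spelled out in Remark~\ref{r3} and rests on the construction of~\S~\ref{s4}.

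Your overall shape is right and matches the paper's: the paper proves the bound $n-2+1/(\tau^2+1)$ by showing the \emph{projection} $\Pi(\mathcal{K}(\tau))$ to the $\xi_1$-axis already has dimension $1/(\tau^2+1)$ (see \eqref{eq:EE}--\eqref{eq:middle}), and the expected improvement to $2/(\tau^2+1)$ should come from verifying that the \emph{fiber} of $\mathcal{K}(\tau)$ over each admissible $\xi_1$ also has dimension $1/(\tau^2+1)$, so that Lemma~\ref{Mars}(I) doubles the contribution. This is exactly what you describe, albeit in the language of a two-parameter Cantor set. One small inaccuracy: your construction is not ``genuinely independent'' in the two coordinates. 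Condition \eqref{eq:YE} rigidly interleaves $s_{2,j}\asymp s_{1,j}^{\tau}$ and $s_{1,j+1}\asymp s_{2,j}^{\tau}$, so the $\xi_2$-scales are slaved to the $\xi_1$-scales; what varies freely is the choice of numerators and partial quotients within that skeleton, and the resulting set is a fibered Cantor structure rather than a Cartesian product.

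Where your diagnosis genuinely diverges from the paper's is in identifying the bottleneck. You single out as the principal obstacle a ``quantitative avoidance lemma'' for cross-vectors $(a,b,0,\ldots,0,c)$ with $ab\ne 0$. In the paper's setup this problem is already disposed of: vectors outside $\scp{\g_j,\h_j}_{\R}$ (Case~2) are excluded by Lemma~\ref{lemur} via Minkowski's Second Convex Body Theorem as soon as $\tau>n$, and within that span (Case~1) the coprimality condition \eqref{eq:vieE} drives the analogue of Proposition~\ref{prop}, which forces any integer vector in the span with bounded norm to be $\pm\g_j$ or $\pm\h_j$. If you drop coprimality to make the two coordinates ``independent'', you create the cross-vector problem yourself; with coprimality retained, it does not arise. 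The gap the paper actually identifies (Remark~\ref{r3}) is different and more delicate: one must show that the coprimality condition \eqref{eq:vieE} is metrically negligible, i.e.\ that it does not decrease the Hausdorff dimension of the fiber $\{\xi_2:(\xi_1,\xi_2)\in\mathcal{K}(\tau)\}$ below the anticipated value $1/(\tau^2+1)$, uniformly in $\xi_1$ on a set of dimension $1/(\tau^2+1)$, so that Lemma~\ref{Mars}(I) can be applied. That is a statement about how a sparse arithmetic constraint along an exponentially growing scale skeleton affects Hausdorff dimension of a Cantor-type set, not a geometry-of-numbers exclusion problem. The paper's own counting argument in~\S~\ref{s4} shows the fiber is nonempty but not that it is metrically large, and the paper also flags in Remark~\ref{r3} that the template method of~\S~\ref{sp} should give a marginally stronger bound than $n-2+2/(n^2+1)$; your proposal does not engage with either of those points.
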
 

See Remark~\ref{r3} in~\S~\ref{s4} below for more details.
For small $n$, Conjecture~\ref{c1} would yield considerable improvements of \eqref{eq:werte}.
While the right inequality is again strict, asymptotically
our results suggest just a small improvement, with lower bound 
still of order $n-2+2/n^2-O(n^{-4})$.
We wonder if the special choice of $\mathcal{H}_1, \mathcal{H}_2$
in the sets $\Theta_n(w), \Theta^n$ are significant. This is constituted in the following more general problem.

\begin{problem} \label{co2}
	Do the lower bounds of Theorem~\ref{H} (possibly Conjecture~\ref{c1}) hold for the set of $\ux\in\R^n$ with the property that some tail of best approximations 
	$(\qq_j)_{j\ge j_0}$ lies in a union of any two fixed two-dimensional
	sublattices of $\Z^{n+1}$? Do the upper bounds of Theorem~\ref{H} (possibly Conjecture~\ref{c1}) hold for the set of $\ux\in\R^n$ with some tail $(\qq_j)_{j\ge j_0}$ in a union of any two (or finite?) two-dimensional sublattices of $\Z^{n+1}$, independent of $\ux$?
\end{problem}

It seems the method of~\S~\ref{genc} can be used to verify the
second part of Problem~\ref{co2} for the special case of the two
sublattices jointly spanning a three-dimensional lattice in $\Z^{n+1}$. This
may be a necessary and sufficient criterion for the lower bounds as well.

The main substance of Theorem~\ref{H} are the lower bounds.
In short, to prove \eqref{eq:theclaim},
we combine a new construction for $n=2$ with
a result by Moshchevitin~\cite[Theorem~12]{ngm}. It follows 
that a ``generic'' vector in $\R^{n-2}$, in sense of Lebesgue measure,
gives rise to vectors in $\Theta_n(w)$ by adding two more suitable real components. This further directly implies the lower bound $n-2$
for the Hausdorff
dimension of the sets $\Theta_n(w)$. With some refined argument and using metrical results by Sun~\cite{sun} and
by Das, Fishman, Simmons, Urba\'nski~\cite{dfsu1, dfsu2}
we find the stronger lower bounds in \eqref{eq:theclaim}.
The upper bound \eqref{eq:zzwei} not implied by Theorem~\ref{G} 
follows independently from a classical formula by Jarn\'ik~\cite{jarnik} and the theory of continued fractions.

By small modifications of the proof of Theorem~\ref{H}, we
can obtain best approximations ultimately lying in 
a union of $k$ two-dimensional sublattices of $\Z^{n+1}$,
that together span a $(k+1)$-dimensional space,
but in no smaller number of two-dimensional subspaces. 
We want to explicitly state
this generalization of the case $k=2$ of Theorem~\ref{H}, but avoid detailed metrical formulas for brevity.

\begin{theorem}  \label{thm3}
	Let $n\ge 3$. For any $2\le k\le n$, there exists a set
	of Hausdorff dimension strictly greater than $n-k$, consisting of $\ux\in\R^n$ with property
	(i) and
	\begin{itemize}
		\item[$(ii^{\ast})$] Some tail of the best approximation sequence $(\qq_j)_{j\ge j_0}$
		lies in the union 
		\[
		\bigcup_{i=1}^{k} \mathcal{H}_i= \bigcup_{i=1}^{k} \scp{\ee_i, \ee_{n+1}}_{\Z}
		\]
		of $k$ two-dimensional sublattices 
		of $\Z^{n+1}$ that together span the $(k+1)$-dimensional sublattice 
		\[
		\mathcal{L}_{n,k}=\scp{\ee_1,\ldots,\ee_{k},\ee_{n+1}}_{\Z}\subseteq \Z^{n+1}.
		\] 
		\item[(vii)] No tail $(\qq_j)_{j\ge j_1}$ lies in union
		of less than $k$ two-dimensional sublattices of $\Z^{n+1}$, 
		similarly no tail is contained
		in a sublattice of dimension $k$ or less. 
	\end{itemize}	
\end{theorem}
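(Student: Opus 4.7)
The plan is to generalize the proof of the lower bound of Theorem~\ref{H} (the case $k=2$, where best approximations alternate between $\mathcal{H}_1$ and $\mathcal{H}_2$) to a cyclic alternation among $k$ sublattices. First I would construct a base vector $\ux^{(0)} \in \R^k$ whose sequence of best approximations for the linear form lies in $\bigcup_{i=1}^{k} \langle \ee_i, \ee_{k+1}\rangle_{\Z} \subseteq \Z^{k+1}$ and cycles through all $k$ of these sublattices. Then, following the strategy of Theorem~\ref{H}, I would apply Moshchevitin's amplification \cite[Theorem~12]{ngm} to insert $n-k$ further coordinates chosen from a generic subset of $\R^{n-k}$, producing the desired vectors in $\R^n$ whose best approximations in $\Z^{n+1}$ lie in the advertised union $\bigcup_{i=1}^{k} \mathcal{H}_i$.

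For the base construction, I would choose $k$ totally irrational reals $\xi_1, \ldots, \xi_k$ whose continued-fraction convergents $p_j^{(i)}/q_j^{(i)}$ to $\xi_i$ have denominators interleaved cyclically, so that the sorted sequence of all $q_j^{(i)}$ (over $j$ and $i$) visits the indices in the order $1, 2, \ldots, k, 1, 2, \ldots$, with large prescribed ratios between consecutive entries. An application of Minkowski's Second Convex Body Theorem, analogous to the one underpinning the $k=2$ case, should then show that between two consecutive denominators in this sorted sequence, only single-coordinate vectors of the form $(0, \ldots, p_j^{(i)}, \ldots, 0, -q_j^{(i)}) \in \mathcal{H}_i$ (with $i$ the currently active index) can serve as best approximations: any lattice vector mixing two or more of the $\mathcal{H}_i$ directions yields a strictly larger linear form once the denominator gaps are sufficiently wide. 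This establishes (ii$^{\ast}$) and, by the cyclic visitation, the first half of (vii). For the second half, a full cycle produces $k$ best approximations (one from each $\mathcal{H}_i$) whose defining matrix has rank $k$; the next approximation from $\mathcal{H}_1$ in the following cycle supplies a $(k+1)$-st linearly independent vector, by coprimality of consecutive convergents of $\xi_1$. Hence any tail spans a $(k+1)$-dimensional sublattice and no sublattice of dimension $\leq k$ contains it, so $R(\ux) = k+1$.

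For the Hausdorff dimension bound, the base construction admits positive-dimensional freedom in the choice of partial quotients of each $\xi_i$ within admissible growth ranges, giving the base set positive Hausdorff dimension; the Moshchevitin amplification then contributes an additional $n-k$ via the generic $n-k$ Lebesgue coordinates, yielding a total strictly greater than $n-k$. The main obstacle will be the Minkowski step with $k$-way interleaving rather than two-way: one must rule out, at every intermediate scale, any combination of lattice vectors from two or more distinct $\mathcal{H}_i$ that could beat the single-coordinate candidate from the active $\mathcal{H}_i$. This should be arrangeable by imposing sufficiently fast growth on the ratios $q_{j+1}^{(i')}/q_j^{(i)}$, but the bookkeeping is substantially more intricate than in the case $k=2$ and is where the bulk of the work would lie.
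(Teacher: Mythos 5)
Your proposal follows the same overall architecture as the paper's proof: cyclically interleave $k$ families of approximation vectors lying in $\mathcal{H}_1,\ldots,\mathcal{H}_k$, invoke Minkowski's Second Convex Body Theorem to exclude best approximations outside the spans of consecutively constructed vectors, then amplify to $\R^n$ via Moshchevitin's Theorem~\ref{moschus} and extract the strict dimension surplus from the freedom in the construction. The paper's base construction takes $\xi_i=\sum_j A_{i,j}^{-1}$ with $A_{i,j}=p_i^{\alpha_{i,j}}$ for pairwise distinct primes $p_i$ and a single growth parameter $\tau$, but the continued-fraction formulation you describe is exactly what the paper itself deploys in~\S\ref{s4}--\S\ref{sp} to obtain the strict inequality, so these are essentially the same route.

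There is, however, a concrete gap in one step. You assert that ``any lattice vector mixing two or more of the $\mathcal{H}_i$ directions yields a strictly larger linear form once the denominator gaps are sufficiently wide,'' and attribute this to the Minkowski step. But Minkowski's theorem only controls collections of \emph{linearly independent} vectors; it says nothing about an integer vector $\bb$ lying in the $\R$-span of, say, two consecutive constructed vectors $\vv_{1,j}$ and $\vv_{2,j}$ with non-integer rational coefficients. Such a $\bb$ is linearly dependent on them, may well have norm smaller than $\max\{\Vert\hat\vv_{1,j}\Vert,\Vert\hat\vv_{2,j}\Vert\}$, and has linear form value bounded by a convex-type combination of two already small quantities, so wide gaps alone do not exclude it from being a best approximation. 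The paper rules this out by an \emph{arithmetic} device, the analogue of Proposition~\ref{prop}: the $A_{i,j}$ are powers of pairwise distinct primes $p_i$ and $F_{i,j}\equiv 1\bmod p_i$, which forces $\scp{\mathcal{C}_{i_0,j_0}}_{\R}\cap\Z^{n+1}=\scp{\mathcal{C}_{i_0,j_0}}_{\Z}$, so any primitive integer vector in the span of consecutive $\vv_{i,j}$ of small norm must actually be one of them. In the continued-fraction setting this becomes a pairwise coprimality condition on the denominators $q_j^{(i)}$ across different indices $i$, the analogue of~\eqref{eq:vieE}, which is a nontrivial constraint whose compatibility with the dimension count must be checked (the paper does this for $k=2$ via the prime-counting argument in~\S\ref{s4}). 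Once this arithmetic condition is imposed, your outline fills in to the paper's argument; without it, the Minkowski step alone leaves the in-span case open.
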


In fact analogues of all (i)-(vi) hold for the $\ux$ in Theorem~\ref{thm3}. For fixed $n$,
it is natural to expect that the Hausdorff dimension of the
set in Theorem~\ref{thm3} increases with $k$. This is not reflected in the claim. For $k=n$, the according set has full $n$-dimensional Lebesgue
measure (a rigorous argument for this follows from similar method in~\S~\ref{31} below). An immediate corollary of Theorem~\ref{thm3} reads as follows.

\begin{corollary} \label{B}
For any $n\ge 2$, any value
$R(\ux)\in\{3,4,\ldots,n+1\}$ is attained for some
totally irrational $\ux\in\R^n$, in fact they form a set of positive
Hausdorff dimension. 	
\end{corollary}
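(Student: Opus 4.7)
The plan is to deduce Corollary~\ref{B} immediately from Theorem~\ref{thm3}, which we are entitled to assume. For each integer $k$ with $2\le k\le n$, Theorem~\ref{thm3} produces a set $S_k\subseteq \R^n$ of Hausdorff dimension strictly greater than $n-k$, consisting of totally irrational $\ux$ that satisfy conditions (i), (ii$^{\ast}$), and (vii).

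First I would verify that every $\ux\in S_k$ satisfies $R(\ux)=k+1$. The upper bound $R(\ux)\le k+1$ is immediate from (ii$^{\ast}$), since some tail of the best approximation sequence is contained in $\mathcal{L}_{n,k}=\scp{\ee_1,\ldots,\ee_k,\ee_{n+1}}_{\Z}$, a sublattice of $\Z^{n+1}$ of dimension $k+1$. The matching lower bound $R(\ux)\ge k+1$ is exactly what condition (vii) asserts: if some tail lay in a sublattice of dimension $R\le k$, this would contradict (vii), so every tail requires a containing sublattice of dimension at least $k+1$, giving $R(\ux)\ge k+1$. Hence $R(\ux)=k+1$ throughout $S_k$.

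Letting $k$ range over $\{2,3,\ldots,n\}$, the value $k+1$ ranges precisely over $\{3,4,\ldots,n+1\}$, so every value of $R(\ux)$ claimed in the corollary is realized by some totally irrational vector. Moreover each $S_k$ has Hausdorff dimension strictly greater than $n-k\ge 0$, which is positive even in the extremal case $k=n$ (where we only get $\dim_H(S_n)>0$). This yields the desired positive-dimensional fibers of the map $\ux\mapsto R(\ux)$ over the target set $\{3,\ldots,n+1\}$.

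There is essentially no obstacle beyond Theorem~\ref{thm3} itself; the corollary is a purely bookkeeping consequence. The only point requiring a moment of care is the direction $R(\ux)\ge k+1$, which relies on reading (vii) in the strong form that is explicitly stated there, forbidding containment of a tail not only in unions of fewer than $k$ two-dimensional sublattices but also in any single sublattice of dimension at most $k$.
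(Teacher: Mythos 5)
Your deduction is exactly the one the paper has in mind---the paper labels Corollary~\ref{B} ``an immediate corollary'' of Theorem~\ref{thm3} and gives no further argument, and your reading of $(ii^{\ast})$ and (vii) to pin down $R(\ux)=k+1$ is the right bookkeeping. The only point you leave unaddressed is the edge case $n=2$, which Theorem~\ref{thm3} does not cover (its hypothesis is $n\ge 3$); but there the target set is just $\{3\}$, and since $R(\ux)\ge 3$ always holds for totally irrational $\ux$ while $R(\ux)\le n+1=3$ is trivial, every totally irrational $\ux\in\R^2$ has $R(\ux)=3$, a set of full Hausdorff dimension $2$. With that one-line remark appended, your proof is complete and matches the paper's intent.
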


Probably Corollary~\ref{B} could be derived independently from \eqref{eq:RHs}, \eqref{eq:Rhs} upon determining (estimating) 
the involved Hausdorff dimensions.
For sake of completeness,
we end this section with estimating
the size of the set $\mathcal{Y}^{n,k}$ of $\ux\in\R^n$ inducing infinitely many best approximations in any finite union of $k$-dimensional sublattices $\mathcal{L}_1, \ldots, \mathcal{L}_{i(\ux)}$, depending on $\ux$, of $\Z^{n+1}$. The proof is not complicated.

\begin{theorem} \label{Y}
	Let $n\ge k\ge 2$ be integers
	and $\mathcal{Y}^{n,k}$ be as above. Then
	\[
	\dim_H(\mathcal{Y}^{n,k}) \leq n-1+ \frac{k}{n+1}.
	\] 
\end{theorem}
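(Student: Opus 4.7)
The plan is to reduce to a single $k$-dimensional sublattice and then carry out a Hausdorff--Cantelli style covering. There are only countably many $k$-dimensional sublattices of $\Z^{n+1}$, and if $\ux$ admits infinitely many best approximations inside a finite union $\mathcal{L}_1\cup\cdots\cup\mathcal{L}_m$, then by pigeonhole infinitely many lie in a single $\mathcal{L}_i$. By countable stability of Hausdorff dimension it therefore suffices to prove
\[
\dim_H E'(\mathcal{L}) \;\leq\; n-1+\frac{k}{n+1}
\]
for each fixed $k$-dimensional sublattice $\mathcal{L}\subset\Z^{n+1}$, where $E'(\mathcal{L})$ denotes the set of $\ux\in[-1,1]^n$ (localization being harmless since $\R^n$ is covered by countably many such boxes) whose sequence of best approximations contains infinitely many vectors from $\mathcal{L}$.

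The key input is Dirichlet's theorem in the form: for every $\ux$ and every $Q\ge 1$ there is $\bb\in\Z^{n+1}$ with $0<\Vert\hat\bb\Vert\le Q$ and $|\bb\cdot\uxs|\le c_n Q^{-n}$. Applied at $Q=\Vert\hat\qq_j\Vert$ to a best approximation $\qq_j$, this forces $|\qq_j\cdot\uxs|\le c_n\Vert\hat\qq_j\Vert^{-n}$. Consequently $E'(\mathcal{L})$ is contained in the $\limsup$ of the slabs
\[
S_\qq\;=\;\{\ux\in[-1,1]^n:|\qq\cdot\uxs|\le c_n\Vert\hat\qq\Vert^{-n}\},\qquad \qq\in\mathcal{L},\ \hat\qq\ne \0,
\]
each of thickness $\asymp \Vert\hat\qq\Vert^{-n-1}$ normal to $\hat\qq$ and with unit transverse extent.

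At a dyadic scale $Q=2^j$, the number of slabs $S_\qq$ with $\Vert\hat\qq\Vert\asymp Q$ is $\asymp Q^k$. This splits into two cases: if $\ee_{n+1}\notin\mathcal{L}$, the projection $\pi(\mathcal{L})\subset\Z^n$ to the first $n$ coordinates has rank $k$, giving $\asymp Q^k$ vectors $\hat\qq$ of that norm, each with $q_{n+1}$ determined by $\mathcal{L}$; whereas if $\ee_{n+1}\in\mathcal{L}$, then $\pi(\mathcal{L})$ has rank $k-1$ and contributes $\asymp Q^{k-1}$ directions, while $q_{n+1}$ now freely attains the $\asymp Q$ integer values for which $S_\qq\cap[-1,1]^n$ is nonempty. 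Covering each such slab by $\asymp Q^{(n+1)(n-1)}$ balls of radius $r_Q=Q^{-n-1}$ yields the total $s$-content
\[
Q^k\cdot Q^{(n+1)(n-1)}\cdot Q^{-(n+1)s}\;=\;Q^{k-(n+1)(s-n+1)},
\]
which is summable over $j$ precisely when $s>n-1+k/(n+1)$; Hausdorff--Cantelli then delivers the claim. The main obstacle I expect is the uniform $\asymp Q^k$ slab count across the two projection cases: a clean but indispensable lattice-point accounting which also crucially uses that the transverse $q_{n+1}$ integers are $O(Q)$ after localizing $\ux$. Once this is in place, the covering estimate is routine.
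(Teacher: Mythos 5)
Your argument is correct and reaches the stated bound, but it takes a genuinely different route from the paper's. After the same initial reduction (countable stability plus pigeonhole to a single $k$-dimensional $\mathcal{L}$), the paper proceeds \emph{structurally}: it maps $\mathcal{L}$ by a rational automorphism of $\R^{n+1}$ to the coordinate sublattice $\scp{\ee_1,\ldots,\ee_{k-1},\ee_{n+1}}_{\Z}$ (as in \S\ref{genc}), deduces via Dirichlet that membership forces $\om(\xi_1,\ldots,\xi_{k-1})\ge n$, and then invokes the generalized Jarn\'ik formula from~\cite{bdv} together with the Cartesian-product bound \eqref{eq:yz} to obtain $n-k+1+\bigl(k-2+\tfrac{k}{n+1}\bigr)=n-1+\tfrac{k}{n+1}$. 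You instead run a direct Hausdorff--Cantelli covering: $E'(\mathcal{L})\subseteq\limsup_{\qq\in\mathcal{L}}S_\qq$, with $\asymp Q^k$ slabs at dyadic scale $Q$, each of thickness $\asymp Q^{-n-1}$ and hence covered by $\asymp Q^{(n+1)(n-1)}$ balls of radius $Q^{-n-1}$, so that $\sum Q^{k+(n+1)(n-1)-(n+1)s}<\infty$ exactly when $s>n-1+k/(n+1)$. This is more self-contained — it bypasses both the automorphism reduction and the citation of the higher-dimensional Jarn\'ik formula, essentially re-proving what is needed from scratch — whereas the paper's argument makes the connection to classical exponent sets explicit, which it also exploits in Theorem~\ref{N}. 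One small point: your case split on $\ee_{n+1}\in\mathcal{L}$ versus $\ee_{n+1}\notin\mathcal{L}$ should really be on $\ee_{n+1}\in\scp{\mathcal{L}}_{\R}$, since $\mathcal{L}$ could contain $m\ee_{n+1}$ for $m\ge 2$ without containing $\ee_{n+1}$ itself, making $\pi|_{\mathcal{L}}$ non-injective; in any case the cleanest formulation is simply that a $k$-dimensional sublattice has $\asymp Q^k$ points of norm $\ll Q$, with the localization to $[-1,1]^n$ guaranteeing $|q_{n+1}|\ll Q$.
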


For $n=k=2$ the upper bound becomes $5/3$. One may compare this
with the smaller bound $7/5$ 
from \eqref{eq:zzwei} for the smaller set $\Theta^2\subseteq \mathcal{Y}^{2,2}$ dealing with a special collection of two-dimensional sublattices in which all but finitely many best approximations lie.

\begin{problem}
	Are the packing dimensions of $\mathcal{Y}^{n,k}$ full? 
	What if we instead
	require all large best approximations to lie in $\mathcal{L}_1, \ldots, \mathcal{L}_{i(\ux)}$?
\end{problem}

\subsection{On the conditions (i)-(vi)}  \label{SS}

Clearly $(iv)\Rightarrow (ii)\Rightarrow (iii)$ and $(iv)\Rightarrow (i)$. Moreover $(iv)\Rightarrow (v)$ by the observation
on one-dimensional subspaces from Remark~\ref{reh}. So (i), (ii), (iii) and (v) are rather stated for sake of completeness.
Moreover, the theory of continued fractions 
and Dirichlet's Theorem 
\eqref{eq:diri} easily imply
that conversely $(ii)\Rightarrow (iv)$, see the proof in~\S~\ref{sieben}. So $(ii)\Leftrightarrow (iv)$.
We want to comment on
(vi) in the light of Theorem~\ref{MT}. The subset of vectors 
within $\Gamma_n$
originally constructed by Moshchevitin in~\cite{mosh} have the property
$\wo(\ux)=\infty$. Thus they
form a set of Hausdorff dimension at most $n-2$ in view of~\cite[Theorem~3.6]{dfsu2}, so the metrical claim in Theorem~\ref{MT} cannot be improved in this way. 
However, using the refinements from~\cite{ngm} together with
some new idea for linear forms in two variables, 
we will find $\ux$ satisfying (i)-(v) and with  
finite uniform exponent of approximation. This enables us to
surpass this treshold value $n-2$ for the Hausdorff dimension
even for the smaller sets $\Theta^n\subseteq \Gamma_n$ in Theorem~\ref{H}.

\section{A structural result on $\Gamma_n$}

In the proof of Theorem~\ref{G}, we show 
via~\cite[Theorem~12]{ngm} the following: For any
$(\xi_1,\xi_2)\in\R^2$ inducing $\wo(\xi_1,\xi_2)>n$, there
is some set $F_n(\xi_1,\xi_2)\subseteq \R^{n-2}$ of full $(n-2)$-dimensional Lebesgue measure so that $(\xi_1,\ldots,\xi_n)\in \Gamma_n$ for any $(\xi_3,\ldots,\xi_n)\in F_n(\xi_1,\xi_2)$
(in fact $(\xi_1,\ldots,\xi_n)\in \widetilde{\Gamma}_n$
for the smaller set $\widetilde{\Gamma}_n$
with special choice of the three-dimensional lattice $\scp{\ee_1,\ee_2,\ee_{n+1}}_{\Z}$). The sets $F_n(\xi_1,\xi_2)$ 
are hereby implicitly derived from the convergence
part of the Borel-Cantelli Lemma within the proof in~\cite{ngm}.
In our last new result, we provide an
explicit set for $F_n$ independent of the choice of $\xi_1,\xi_2$
in terms of Diophantine properties, upon
increasing the lower bound on the uniform exponent to $3n-4$.

\begin{theorem} \label{neu}
	Let $n\ge 2$. For any vector $\ux=(\xi_1,\ldots,\xi_n)$ in the set
	\[
	\mathcal{T}_n:= \left\{ (\xi_1,\xi_2): \wo(\xi_1,\xi_2)>3n-4 \right\}\times \{ (\xi_3,\ldots,\xi_n): \om(\xi_3,\ldots,\xi_n)=n-2 \}\subseteq \R^n,
	\]
	the tail of best approximations lies in the three-dimensional sublattice $\mathcal{L}_{n,3}=\scp{\ee_1,\ee_2,\ee_{n+1}}_{\Z}$ of $\Z^{n+1}$.
	In particular
	\[
	\mathcal{T}_n\subseteq \Gamma_n.
	\]
	Moreover $\wo(\ux)=\wo(\xi_1, \xi_2)$ and $\om(\ux)=\om(\xi_1, \xi_2)$ hold for any $\ux\in \mathcal{T}_n$.
\end{theorem}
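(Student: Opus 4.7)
Proof proposal. The plan is to establish the central assertion: for every $\ux=(\ux',\ux'')\in\mathcal{T}_n$ with $\ux'=(\xi_1,\xi_2)$ and $\ux''=(\xi_3,\ldots,\xi_n)$, the tail of the best-approximation sequence of $\ux$ lies in $\mathcal{L}_{n,3}=\scp{\ee_1,\ee_2,\ee_{n+1}}_\Z$. Once this is done, the identities $\wo(\ux)=\wo(\ux')$ and $\om(\ux)=\om(\ux')$ are immediate, because the restriction of the linear form $\qq\cdot\uxs$ to $\mathcal{L}_{n,3}$ depends only on $(q_1,q_2,q_{n+1})$, so the tail best-approximation sequence of $\ux$ coincides with the embedded best-approximation sequence of $\ux'$. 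Total irrationality of $\ux$ is automatic: $\om(\ux'')=n-2<\infty$ and $\wo(\ux')>0$ force $\ux''$ and $\ux'$ to be totally irrational.

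Fix $\delta>0$ with $\wo(\ux')>3n-4+2\delta$ and $\epsilon\in(0,\delta/2)$, and let $\|\cdot\|$ denote the distance to the nearest integer on $\R$. For every $Q$ sufficiently large the definition of $\wo(\ux')$ supplies $(p_1,p_2,r)\in\Z^3$ with $\max(|p_1|,|p_2|)\le Q$ and $|p_1\xi_1+p_2\xi_2+r|\le Q^{-(3n-4)-\delta}$; its embedding $\bb^{\ast}=(p_1,p_2,0,\ldots,0,r)\in\mathcal{L}_{n,3}$ is a valid test vector, so any best approximation $\qq$ of $\ux$ at scale $Q=\Vert\hat\qq\Vert$ large satisfies $|\qq\cdot\uxs|\le Q^{-(3n-4)-\delta}$. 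Writing $L_1=q_1\xi_1+q_2\xi_2$, $L_2=q_3\xi_3+\cdots+q_n\xi_n$ and taking $q_{n+1}$ optimally, $|\qq\cdot\uxs|=\|L_1+L_2\|$. If $\qq''=(q_3,\ldots,q_n)\ne\0$, the assumption $\om(\ux'')=n-2$ yields $\|L_2\|\ge Q_2^{-(n-2)-\epsilon}$ for $Q_2=\Vert\qq''\Vert$ large, and the $\R/\Z$-triangle inequality $\|L_1\|\ge\|L_2\|-\|L_1+L_2\|$ then forces
\[
\|L_1\|\ge\tfrac12 Q_2^{-(n-2)-\epsilon}
\]
for $Q$ large, since $Q_2\le Q$ and $(3n-4)+\delta>(n-2)+\epsilon$ with margin $2n-2$.

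Supposing $\qq''\ne\0$, the sub-case $(q_1,q_2)=\0$ gives the immediate contradiction $\|L_1\|=0$. The complementary sub-case $(q_1,q_2)\ne\0$ is necessarily in the cancellation regime where $L_1$ and $L_2$ are forced to approximately cancel modulo $\Z$, and I plan to treat it by adapting the Borel--Cantelli style counting of~\cite[Theorem~12]{ngm}, replacing the probabilistic input by the deterministic hypothesis $\om(\ux'')=n-2$. The key step is that any two putative bad vectors $\qq^{(1)},\qq^{(2)}$ sharing the pair $(q_1,q_2)$ would yield, by subtraction, a non-zero integer vector in the $(\ux'',1)$-linear form of norm $\le 2Q_2$ and value $\le 2Q^{-(3n-4)-\delta}$, which is forbidden by $\om(\ux'')=n-2$ once $Q$ is large because $(3n-4)+\delta>(n-2)+\epsilon$. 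Hence at most one bad $\qq''$ can occur per admissible pair $(q_1,q_2)$; distributing the at most $O(Q_1^{\,2})$ such pairs and pigeonholing over bins of width $Q^{-(3n-4)-\delta}$ for the fractional part of $L_1$ turns any remaining coincidence into an inadmissibly good homogeneous approximation of $\ux'$, contradicting the density structure of best approximations of $\ux'$ dictated by $\wo(\ux')>3n-4$. The threshold $3n-4$ is chosen precisely so that the exponent gain $2n-2=(3n-4)-(n-2)$ absorbs the combinatorial cost $n=2+(n-2)$ of the free coordinates in $(q_1,\ldots,q_n)$, forcing the resulting dyadic total to converge deterministically.

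The main obstacle lies in the cancellation sub-case, where the counting/pigeonhole step must be carried out carefully to track constants through the interplay of the two exponent hypotheses. The remaining ingredients are routine, and once $\qq''=\0$ holds for all large $Q$, the conclusions of Theorem~\ref{neu} follow as indicated above.
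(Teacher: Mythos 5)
There is a genuine gap. The only substantive case — $\qq''\ne\0$ and $(q_1,q_2)\ne\0$ — is not resolved; what you offer is a plan ("I plan to treat it by adapting the Borel–Cantelli style counting…") rather than a proof, and the plan does not hold up to scrutiny. Your observation that at most one bad $\qq''$ can occur per pair $(q_1,q_2)$ has no evident force: there is anyway at most one best approximation of $\ux$ per scale, so the subtraction step does not reduce the candidate set in a useful way, and the subsequent pigeonhole "contradicting the density structure of best approximations of $\ux'$" is not a contradiction at all — $\wo(\ux')>3n-4$ guarantees that good rational approximations to $(\xi_1,\xi_2)$ exist, it does not forbid them. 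Moreover your stated arithmetic rationale for the threshold is wrong: you write that the exponent gain $2n-2=(3n-4)-(n-2)$ "absorbs the combinatorial cost $n=2+(n-2)$," but $2n-2\ne n$ for $n\ge 3$, so this heuristic does not even balance. (The actual reason $3n-4$ appears, as seen in the paper's computation, is that $3(n-\sigma)/(1+\sigma)+2(n-2)/(n-1)<0$ is equivalent to $\sigma>3n-4$, i.e.\ $3n^2-n-4=(3n-4)(n+1)$ factors; nothing in your sketch touches this.)

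The paper takes an entirely different route for the hard case. It does not count bad $\qq$ at all. Instead it works at a fixed scale: given a putative bad best approximation $\bb$ that is linearly independent from the two consecutive best approximations $\vv_j,\vv_{j+1}$ of $(\xi_1,\xi_2)$ embedded into $\Z^{n+1}$, it invokes Lemma~\ref{llama} (an easy consequence of Minkowski's Second Convex Body Theorem together with $\om(\xi_3,\ldots,\xi_n)=n-2$) to produce $n-1$ further linearly independent integer vectors supported on the last $n-1$ coordinates, each inducing a value $\ll Q^{-(n-2)+\varepsilon}$ for the $(\xi_3,\ldots,\xi_n,1)$-form. After verifying the linear independence \eqref{eq:Zy} (which itself requires a separate argument using the best-approximation structure of $(\xi_1,\xi_2)$), one obtains $n+1$ linearly independent integer vectors, all of which are small for the full convex body $K(T)$ at an appropriately tuned scale $T$. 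This forces the product of all $n+1$ successive minima of $K(T)$ to have a strictly negative $T$-exponent once $\sigma>3n-4$, contradicting Minkowski's Second Theorem. None of this machinery — the auxiliary vectors from Lemma~\ref{llama}, the linear-independence verification \eqref{eq:Zy}, or the successive-minima bookkeeping — appears in your sketch, and the Borel–Cantelli/pigeonhole route you propose has no visible way to reach the same conclusion. Your opening reductions (the "moreover" claims, total irrationality, the case $(q_1,q_2)=\0$) are fine, but the core of the theorem remains unproved.
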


See also Remark~\ref{r9} below for refinements.
From Theorem~\ref{neu} together with metrical results from~\cite{dfsu1, dfsu2}, we get another proof for the lower bound
$n-1$ for the packing dimension of $\Gamma_n$. For its Hausdorff dimensions, using a classical result of Khintchine~\cite{khint} and again~\cite{dfsu1, dfsu2},
we can 
deduce from Theorem~\ref{neu} the lower bound $n-2+2/(3n-4)$ for $n\ge 3$, weaker
than the bound in Theorem~\ref{G}. 
In fact both claims again hold for $\widetilde{\Gamma}_n$.
While there are some similarities 
underying the fundamental ideas of Theorem~\ref{neu}
and~\cite[Theorem~12]{ngm}, the proofs differ considerably. 
Theorem~\ref{neu} uses Minkowski's Second Convex Body Theorem instead
of the Borel Cantelli Lemma.

The choice of the maximum norm in our new results is just for convienence,
we may establish the analogous result for a large class of 
norms, including all $p$-norms $(\sum |x_i|^p)^{1/p}$, by small
modifications of the proof. In fact, at least in Theorems~\ref{G},~\ref{neu},
we may take any norm.

\section{Preliminary results}

\subsection{An auxiliary result by Moshchevitin}  \label{MO}

We recall a partial result of~\cite[Theorem~12]{ngm}.  
In the notation of~\cite{ngm}, its special case $n=1, m=2$ and $m^{\ast}$ equal to our present $n$, yields:

\begin{theorem}[Moshchevitin]  \label{moschus}
	Let $n\ge 3$. If $\qq_j=(q_{j,1}, q_{j,2}, q_{j,3})\in\Z^3$ is the sequence of best approximations for 
	$(\xi_1, \xi_2)\in\R^2$ and we have 
	\begin{equation}  \label{eq:mc}
	\sum_{j=1}^{\infty} \Vert \qq_{j+1}\Vert^{n} \log (\Vert \qq_{j+1}\Vert)\cdot \vert q_{j,1}\xi_1+q_{j,2}\xi_2+q_{j,3} |< \infty,
	\end{equation}
	then for almost all choices
	of remaining entries $(\xi_3,\ldots,\xi_n)\in\R^{n-2}$
	with respect to $(n-2)$-dimensional Lebesgue measure,
	for the vector $\ux=(\xi_1,\ldots,\xi_n)$ and some $j_0(\ux)$, the embedded sequence 
	\[
	q_{j,1}\ee_1+q_{j,2}\ee_2+q_{j,3}\ee_{n+1}=(q_{j,1},q_{j,2},0,\ldots,0,q_{j,3})\in\Z^{n+1}, \qquad j\ge j_0(\ux),
	\] 
	is the tail of the sequence of best approximations.
\end{theorem}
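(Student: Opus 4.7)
The plan is a Borel--Cantelli argument on the parameter $\uy:=(\xi_3,\ldots,\xi_n)\in\R^{n-2}$. The key preliminary observation is that for any $\bb\in\scp{\ee_1,\ee_2,\ee_{n+1}}_{\Z}$ the norm $\Vert\hat\bb\Vert=\max(|b_1|,|b_2|)$ and the linear form $\bb\cdot\uxs=b_1\xi_1+b_2\xi_2+b_{n+1}$ are independent of $\uy$. Hence the best approximations of $\ux$ \emph{within} this sublattice are precisely the embeddings of the 2D best approximations $\qq_j$, and the embedded sequence fails to be the tail of best approximations of $\ux$ if and only if there exist arbitrarily large $j$ and a ``competitor'' $\bb\in\Z^{n+1}\setminus\scp{\ee_1,\ee_2,\ee_{n+1}}_{\Z}$ with $0<\Vert\hat\bb\Vert\le Q_{j+1}:=\Vert\hat\qq_{j+1}\Vert$ and $|\bb\cdot\uxs|\le L_j:=|q_{j,1}\xi_1+q_{j,2}\xi_2+q_{j,3}|$.

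Fix $T>0$ and let $E_j\subseteq[-T,T]^{n-2}$ be the set of $\uy$ admitting such a competitor. For any admissible $\bb$ some coordinate $b_k$ with $3\le k\le n$ is nonzero; put $M:=\max_{3\le\ell\le n}|b_\ell|\ge1$ and let $k$ attain this maximum. The inequality $|\bb\cdot\uxs|\le L_j$ is linear in $\xi_k$ with leading coefficient $b_k$, so for any fixed values of the other coordinates and of $b_{n+1}$ it cuts out a slab in $\xi_k$ of length $\le 2L_j/M$; by Fubini the corresponding subset of $[-T,T]^{n-2}$ has measure $\le (2T)^{n-3}\cdot 2L_j/M$. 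Summing over the $\ll Q_{j+1}^2$ choices of $(b_1,b_2)$, the $\ll M^{n-3}$ choices of $(b_3,\ldots,b_n)$ with max equal to $M$, and (since $L_j\to0$ while $|\hat\bb\cdot\ux|$ ranges over an interval of length $O_{T}(Q_{j+1})$ on the cube) the $\ll Q_{j+1}$ admissible integers $b_{n+1}$, yields
\[
|E_j|\;\ll_{T,n}\;\sum_{M=1}^{Q_{j+1}} Q_{j+1}^{2}\cdot M^{n-3}\cdot Q_{j+1}\cdot\frac{L_j}{M}\;\ll\;\Vert\qq_{j+1}\Vert^{n}\log(\Vert\qq_{j+1}\Vert+1)\cdot L_j,
\]
using $\Vert\qq\Vert\asymp_{\ux}\Vert\hat\qq\Vert$ together with $\sum_{M=1}^{Q}M^{n-4}\ll Q^{n-3}$ for $n\ge4$ and $\asymp\log Q$ for $n=3$.

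By hypothesis \eqref{eq:mc} we then have $\sum_j|E_j|<\infty$, so the convergence Borel--Cantelli lemma gives that a.e.\ $\uy\in[-T,T]^{n-2}$ lies in only finitely many $E_j$; choosing $j_0=j_0(\ux)$ larger than all these indices then makes the embedded sequence the tail of best approximations of $\ux$. Taking a countable union over $T\in\mathbb{N}$ promotes this to almost every $\uy\in\R^{n-2}$. The main technical obstacle is the integer-vector counting above, where the exponent $\Vert\qq_{j+1}\Vert^n$ must match exactly the summability condition \eqref{eq:mc}; the critical case $n=3$ is precisely the one in which the sum $\sum_{M}M^{-1}$ is borderline divergent, explaining the sharp appearance of the $\log$ factor in the hypothesis.
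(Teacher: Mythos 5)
The paper does not reproduce a proof of this theorem; it cites Moshchevitin~\cite{ngm} and notes that the exceptional sets are derived from the convergence part of the Borel--Cantelli Lemma in~\cite{ngm}. Your Borel--Cantelli argument (competitor characterization within $\Z^{n+1}\setminus\scp{\ee_1,\ee_2,\ee_{n+1}}_\Z$, Fubini slab estimate, summation over $\bb$) implements exactly that strategy and is correct. One refinement is worth noting: once $(b_1,\ldots,b_n)$ is fixed with $M=\max_{3\le\ell\le n}|b_\ell|$, the quantity $b_3\xi_3+\cdots+b_n\xi_n$ ranges over an interval of length $O_T(M)$ on $[-T,T]^{n-2}$, not merely $O_T(Q_{j+1})$, so the number of admissible $b_{n+1}$ is $\ll_{T,n}M$; inserting this into your sum gives $|E_j|\ll_{T,n}Q_{j+1}^2 L_j\sum_{M\le Q_{j+1}}M^{n-3}\ll Q_{j+1}^{n}L_j$ even when $n=3$. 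Thus the logarithmic factor in~\eqref{eq:mc} is an artifact of the looser bound on $b_{n+1}$ rather than a genuine obstruction, and your final remark attributing it to the borderline divergence of $\sum_M M^{-1}$ at $n=3$ is accurate only for your chosen count; the argument as a whole nevertheless proves the theorem as stated.
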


In fact, the logarithmic factor in \eqref{eq:mc} is only 
required when $n=3$.
Since the norms of best approximations grow exponentially~\cite{blau},
we see that the condition \eqref{eq:mc} holds as soon as
for some $\epsilon>0$ we have
\[
\vert q_{j,1}\xi_1+q_{j,2}\xi_2+q_{j,3} | < \Vert \qq_{j+1}\Vert^{-n-\epsilon}, \qquad j\ge j_0.
\]
It is not hard to see that this is in turn satisfied if
\begin{equation}  \label{eq:U}
\wo(\xi_1, \xi_2)>n.
\end{equation}
Note that any $\ux\in\R^n$ arising from Theorem~\ref{moschus} is automatically totally irrational as soon as $\xi_1, \xi_2$ 
has this property (otherwise the sequence of best approximations for $\ux$ would terminate).

\subsection{Metric results for Cartesian products and fibers}

Part (I)
of the following is partial claim of~\cite[Proposition~2.3]{mattila},
originally due to Marstrand~\cite{mars} when $n_1=n_2=1$, see Federer~\cite[\S~2.10.25]{fed} for arbitrary dimension. Part (II) can
be obtained by slightly generalizing the proof
of part (c) in the proof of Tricot's~\cite[Theorem~3]{tricot}.

\begin{lemma}[Marstrand; Tricot]  \label{Mars}
	Let $n_1, n_2$ be positive integers and $M\subseteq \R^{n_1+n_2}$
	be measurable. Denote fibers by $M_{\xx}=\{ \yy\in\R^{n_2}: (\xx,\yy)\in M\}$.
	\begin{itemize}
		\item[(I)] If $\dim_H(\{ \xx\in\R^{n_1}: \dim_H(M_{\xx})\ge t\})\ge s$, then $\dim_H(M)\ge s+t$. 
		\item[(II)] If $\dim_P(\{ \xx\in\R^{n_1}: \dim_H(M_{\xx})\ge t\})\ge s$, then $\dim_P(M)\ge s+t$. 
	\end{itemize} 
\end{lemma}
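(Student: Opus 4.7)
The plan is to prove both parts by a unified Frostman-type measure construction, exploiting the characterizations of $\dim_H$ and $\dim_P$ via the local dimensions of measures. In both cases I fix $s' < s$ and $t' < t$, and set $E = \{\xx \in \R^{n_1} : \dim_H(M_{\xx}) \ge t\}$.

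For part (I), by Frostman's lemma there is a compactly supported Borel probability measure $\mu$ on $E$ satisfying $\mu(B(\xx, r)) \le C\, r^{s'}$ for all $\xx \in \R^{n_1}$ and $r > 0$. For each $\xx \in E$, applying Frostman's lemma on the fiber $M_\xx$ yields a probability measure $\nu_{\xx}$ on $M_{\xx}$ with $\nu_{\xx}(B(\yy, r)) \le C_{\xx} r^{t'}$. To handle measurability of $\xx \mapsto \nu_{\xx}$ and uniformity of the constant $C_\xx$, I stratify $E = \bigcup_{k \ge 1} E_k$ with $E_k = \{\xx \in E : C_{\xx} \le k\}$, pick some $k$ with $\mu(E_k) > 0$, restrict $\mu$ to $E_k$, and invoke a standard measurable selection theorem for the kernel $\xx \mapsto \nu_\xx$. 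The disintegrated measure $d\lambda(\xx,\yy) := d\nu_{\xx}(\yy)\, d\mu(\xx)$ is then supported in $M$ and satisfies
\[
\lambda\bigl(B((\xx_0,\yy_0), r)\bigr) \;\le\; \int_{B(\xx_0, r)} \nu_{\xx}(B(\yy_0, r))\, d\mu(\xx) \;\le\; Ck\, r^{s'+t'},
\]
so the mass distribution principle gives $\dim_H(M) \ge s' + t'$. Letting $s' \nearrow s$ and $t' \nearrow t$ proves (I).

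For part (II), the only modification is to replace the Hausdorff--Frostman measure on $E$ by a packing--Frostman measure: since $\dim_P(E) \ge s > s'$, there exists a probability measure $\mu$ on $E$ with upper local dimension $\overline{d}_\mu(\xx) := \limsup_{r \to 0} \log \mu(B(\xx,r))/\log r \ge s'$ for $\mu$-almost every $\xx$. Keeping the same fiber measures $\nu_{\xx}$ and the same stratification, the preceding estimate still gives $\lambda(B((\xx_0, \yy_0), r)) \le Ck\, r^{t'}\, \mu(B(\xx_0, r))$. Dividing by $\log r < 0$ and taking $\limsup_{r \to 0}$, the $O(1)$ terms vanish and one obtains
\[
\overline{d}_\lambda(\xx_0, \yy_0) \;\ge\; t' + \overline{d}_\mu(\xx_0) \;\ge\; t' + s'
\]
for $\lambda$-almost every $(\xx_0, \yy_0)$ by Fubini. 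The characterization of packing dimension in terms of upper local dimensions then yields $\dim_P(M) \ge \dim_P(\mathrm{supp}(\lambda)) \ge s' + t'$, and letting $s', t'$ tend to their respective suprema finishes (II).

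The main obstacle is ensuring that the assignment $\xx \mapsto \nu_{\xx}$ can be made Borel measurable with a uniform Frostman constant, so that the disintegration $\lambda$ is a well-defined Borel measure; this is resolved by the stratification into the sets $E_k$ together with a standard measurable selection argument. A minor technical point is that in a general norm the ball $B((\xx_0,\yy_0),r) \subseteq \R^{n_1+n_2}$ is comparable to the product $B(\xx_0, r) \times B(\yy_0, r)$ only up to constants (and is in fact equal in the maximum norm used throughout the paper), but these constants affect only the Frostman constant, not the exponent, and so do not influence the dimension conclusion.
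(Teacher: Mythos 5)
Your proof takes a genuinely different route from the sources the paper cites. For (I), the paper points to Mattila's Proposition~2.3 and Federer~2.10.25, which proceed via the Eilenberg/coarea inequality for Hausdorff measures applied to the coordinate projection: if $\mathcal{H}^{s'+t'}(M)=0$ then $\int^*\mathcal{H}^{t'}(M_{\xx})\,d\mathcal{H}^{s'}(\xx)\ll \mathcal{H}^{s'+t'}(M)=0$, whence $\mathcal{H}^{t'}(M_{\xx})=0$ for $\mathcal{H}^{s'}$-a.e.\ $\xx$; choosing $s'<s$, $t'<t$ with $s'+t'>\dim_H M$ then shows $\dim_H(\{\xx:\dim_H(M_{\xx})\ge t\})\le s'<s$, the contrapositive of (I). This avoids constructing measures on the fibers altogether. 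For (II) the paper points to an adaptation of Tricot's covering-based argument. Your approach instead builds a product-like measure $\lambda$ from Frostman measures on the base and on each fiber, which is a legitimate and conceptually attractive alternative, and your pointwise estimate $\overline{d}_\lambda(\xx_0,\yy_0)\ge t'+\overline{d}_\mu(\xx_0)$ and the subsequent use of the upper-local-dimension characterization of $\dim_P$ are correct.

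There is, however, a real gap at the step you flag as the ``main obstacle'': you need the kernel $\xx\mapsto\nu_{\xx}$ to be a Borel (or at least $\mu$-measurable) family of probability measures before $d\lambda=d\nu_{\xx}\,d\mu$ is even a well-defined Borel measure on $\R^{n_1+n_2}$, and you need the sets $E_k$ to be $\mu$-measurable. Stratifying by the Frostman constant does not by itself accomplish either: the constant $C_{\xx}$ is not canonically defined (there is no preferred $\nu_{\xx}$), so ``$C_{\xx}\le k$'' only makes sense after one has already exhibited a measurable choice, which is the very thing being sought. What is actually required is (a) $M$ Borel, so that the multifunction $\xx\mapsto\{\nu\in\mathcal{P}(K):\nu(M_{\xx})=1,\ \nu(B(y,r))\le k\,r^{t'}\ \forall y,r\}$ has a Borel graph and closed (weak-$*$ compact) values, and (b) an appeal to a selection theorem such as Kuratowski--Ryll-Nardzewski or Jankov--von Neumann. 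Neither verification is carried out, and it is not a routine one-liner; note also that the paper's hypothesis ``$M$ measurable'' is weaker than what this route (and, for that matter, Frostman's lemma on the fibers) actually needs, so one should state the Borel/analytic hypothesis explicitly. A minor additional point: in (II), conclude directly from $\lambda(M)=1$ and $\overline{d}_\lambda\ge s'+t'$ $\lambda$-a.e.\ that $\dim_P(M)\ge s'+t'$; the detour through $\dim_P(\mathrm{supp}\,\lambda)$ is unnecessary and slightly off, since $\mathrm{supp}\,\lambda$ need not be contained in $M$.
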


For $A\subseteq \R^{n_1},B\subseteq \R^{n_2}$ any 
non-empty measurable sets,
Lemma~\ref{Mars} implies
\begin{equation}  \label{eq:hdcp}
\dim_H(A\times B) \ge \dim_H(A)+ \dim_H(B),\quad \dim_P(A\times B) \ge \dim_H(A) + \dim_P(B).
\end{equation}
However, we will require the more general properties
of Lemma~\ref{Mars} in place of \eqref{eq:hdcp}.
Conversely, the upper bounds
\begin{equation} \label{eq:yz}
\dim_H(A\times B)\le \dim_H(A)+\dim_P(B)\le  \dim_H(A)+ n_2,
\end{equation}
and
\begin{equation} \label{eq:ttre}
\dim_P(A\times B)\leq \dim_P(A) + \dim_P(B)\le \dim_P(A)+ n_2,
\end{equation}
hold, where the non-obvious left estimates are again part of~\cite[Theorem~3]{tricot}.
The full claim of~\cite[Theorem~3]{tricot} summarizes
the properties \eqref{eq:hdcp}, \eqref{eq:yz}, \eqref{eq:ttre} 
in short as
\begin{align*}
\dim_H(A)+ \dim_H(B)&\le \dim_H(A\times B)\le \dim_H(A) + \dim_P(B)\\ &\le \dim_P(A\times B)\leq \dim_P(A) + \dim_P(B).
\end{align*}


\section{Proof of Theorem~\ref{G}  }  \label{s3}

Let us immediately introduce a subset of $\Gamma_n$
of relevance below. 

\begin{definition}
	Let $\widetilde{\Gamma}_n\subseteq \R^n$ be the set of $\ux\in \Gamma_n$ for which the three-dimensional lattice
	from the definition of $R(\ux)$ can be chosen $\mathcal{L}=\mathcal{L}_{n,3}=\scp{\ee_1,\ee_2,\ee_{n+1}}_{\Z}$.
\end{definition}

It is obvious that $\Gamma_2=\widetilde{\Gamma}_2$ which is just 
the set of totally irrational $\ux\in\R^2$, as well as
\begin{equation} \label{eq:OOO}
\Theta^n\subseteq\widetilde{\Gamma}_n\subseteq \Gamma_n, \qquad n\ge 2.
\end{equation}

\subsection{Proof of lower bounds}  \label{boup}

A key observation is that the work of Das, Fishman, Simmons, Urba\'nski~\cite[\S~3.3]{dfsu2} implies that
\begin{equation} \label{eq:inv}
\mathcal{V}(w):= \dim_H(\{ (\xi_1,\xi_2): \wo(\xi_1,\xi_2)>w\})= 
\frac{2}{w}, \qquad w\in [2+\sqrt{2},\infty].
\end{equation}
Here we implicitly restrict to $(\xi_1,\xi_2)$ totally irrational.
They further provided a different, more complicated, explicit formula for $w<2+\sqrt{2}$ as well that we want to avoid stating.
Formula~\eqref{eq:inv}, but not the formula for $n=3$, follows alternatively from the independent paper~\cite{bcc} and Jarn\'ik's identity
that relates one linear form with simultaneous approximation
for two variables. 

On the other hand, by the observations in~\S~\ref{MO}, 
via using Theorem~\ref{moschus}, we may apply
(I) of Lemma~\ref{Mars} with $M=\widetilde{\Gamma}_n$, and parameters $n_1=2, t=n_2=n-2$ and $s=\mathcal{V}(n)$.
Note hereby that any arising $\ux$ is indeed totally
irrational by the concluding remark in~\S~\ref{MO}.
By \eqref{eq:OOO} and Lemma~\ref{Mars} we infer the lower bound
\[
\dim_H(\Gamma_n)\ge \dim_H(\widetilde{\Gamma}_n)\ge s+t=\mathcal{V}(n)+n-2.
\]
By using \eqref{eq:inv} if $n\ge 4>2+\sqrt{2}$ and after some simplifications of the according formula
when $n=3<2+\sqrt{2}$, the right hand side becomes the respective values in \eqref{eq:mm}.

%

Regarding packing dimension, it follows directly from~\cite[Theorem~3.10]{dfsu2} that the packing dimension of the set involved in \eqref{eq:inv}
is at least $1$ for any $w\ge 2$, with equality for $w\ge 3$.
Combined with (II) of Lemma~\ref{Mars} for the same parameters
$n_i, t$ as above, indeed
\begin{align*}
\dim_P(\Gamma_n) \geq
\dim_P(\widetilde{\Gamma}_n) \geq n-2 + \dim_P(\{ (\xi_1,\xi_2)\in\R^2:\; \wo(\xi_1,\xi_2)>n \})=n-1,
\end{align*}
for $n\ge 3$. For $n=2$ the claim is obvious.


\begin{remark}
	An alternative proof of the bound for the packing dimension follows from \eqref{eq:OOO} and the stronger claim
	$\dim_P(\Theta^n)\ge n-1$ proved 
	in~\S~\ref{sp} below.
\end{remark}

\subsection{Upper bounds: Special three-dimensional lattice} \label{31}

We first show the upper bounds for the smaller set $\widetilde{\Gamma}_n$
where the three-dimensional lattice containing all large best approximations is just $\scp{\ee_1,\ee_2,\ee_{n+1}}_{\Z}$.
From the definition of $\widetilde{\Gamma}_n$
and by Dirichlet's Theorem \eqref{eq:diri}, 
we see that any $\ux=(\xi_1,\ldots,\xi_n)\in \widetilde{\Gamma}_n$ 
satisfies
\[
\wo(\xi_1, \xi_2)\ge n.
\]
Thus
\begin{equation}  \label{eq:geduld}
\widetilde{\Gamma}_n\subseteq \{ (\xi_1,\xi_2)\in\R^2: \wo(\xi_1, \xi_2)\ge n  \} \times \R^{n-2}, \qquad n\ge 2.
\end{equation}
Combined with \eqref{eq:yz}, we get
\[
\dim_H(\widetilde{\Gamma}_n)\le n-2+ \dim_H(\{ (\xi_1,\xi_2)\in\R^2: \wo(\xi_1, \xi_2)\ge n  \}).
\]
As previously noticed,
by~\cite[Theorem~4.9]{dfsu2} the right dimension is again $\mathcal{V}(n)$ as for the sets in \eqref{eq:inv}
where strict inequality is imposed. This proves the reverse upper bound
for the Hausdorff dimension of the sets $\widetilde{\Gamma}_n$. 

Combining \eqref{eq:geduld} with \eqref{eq:ttre}, we get that
\[
\dim_P(\widetilde{\Gamma}_n)\le n-2+ \dim_P(\{ (\xi_1,\xi_2)\in\R^2: \wo(\xi_1, \xi_2)\ge n  \})= n-1, \qquad n\ge 3,
\]
where the last identity is again due to~\cite[\S~3.3]{dfsu2}. 
The reverse lower bound $n-1$ for $\dim_P(\widetilde{\Gamma}_n)$ 
(thus also for $\dim_P(\Gamma_n)$) for $n\ge 2$
was already shown in~\S~\ref{boup}, hence identity \eqref{eq:grim} is proved for the smaller sets $\widetilde{\Gamma}_n$.

\subsection{Upper bounds: General case} \label{genc}
We
settle the upper bounds for the larger sets $\Gamma_n$
where the three-dimensional integer lattice is arbitrary.
The main idea is to apply rational automorphisms
of $\R^{n+1}$ to reduce it to the special case of~\S~\ref{31}.
Our proof below performing this in detail is reasonably lengthy 
and may not be the easiest available.

First notice that
since there are only countably many three-dimensional sublattices of $\Z^{n+1}$ and by sigma-additivity of measures, 
it suffices to show that for any 
fixed three-dimensional sublattice $\mathcal{L}$ of $\Z^{n+1}$, the set of $\ux\in\R^n$ inducing some tail of best approximations in $\mathcal{L}$, has Hausdorff and packing dimension at most as in Theorem~\ref{G}. 
Denote by $\Gamma_n(\mathcal{L})\subseteq \Gamma_n\subseteq \R^n$ this set for 
any fixed given sublattice $\mathcal{L}$ of $\Z^{n+1}$. Then $\widetilde{\Gamma}_n= \Gamma_n(\mathcal{L})$
if $\mathcal{L}=\scp{\ee_1,\ee_2,\ee_{n+1}}_{\Z}$.

There is a bijective linear map $f_{\mathcal{L}}: \R^{n+1}\to \R^{n+1}$ induced by
some integer matrix $A_{\mathcal{L}}\in\mathbb{Z}^{(n+1)\times (n+1)}$
that maps $\mathcal{L}$ to the particular sublattice $\scp{\ee_1,\ee_2,\ee_{n+1}}_{\Z}$, as in $\widetilde{\Gamma}_n$.
To see this, we extend a $\Z$-basis of $\mathcal{L}$ 
to any vector basis 
of $\R^{n+1}$ consisting of integer vectors, then map the 
three $\Z$-base vectors of $\mathcal{L}$
to $\ee_1, \ee_2, \ee_{n+1}$ respectively, then extend 
it to an automorphism of $\R^{n+1}$ by mapping the 
remaining $n-2$ integer
base vectors to $\ee_3,\ldots,\ee_n$, and finally 
multiply the arising rational
matrix by the common denominator.
Denote by $g_{\mathcal{L}}: \R^{n+1}\to \R^{n+1}$ the adjoint map
of the inverse $f_{\mathcal{L}}^{-1}$ of $f_{\mathcal{L}}$. Since  $g_{\mathcal{L}}$ is an automorphism as well, 
it is bi-Lipschitz and thus preserves Hausdorff and
packing dimension~\cite{falconer}. Hence if we write
\[
\bg:= \iota(\Gamma_n(\mathcal{L}))
\]
for the isometric image of $\Gamma_n(\mathcal{L})\subseteq \R^n$
into $\R^{n+1}$ via the 
embedding
\[
\iota:\R^n\to \R^{n+1},\qquad (x_1,\ldots,x_n)\to (x_1,\ldots,x_n,1),
\] 
that just equals the $\ux\to \uxs$ map, we have 
\begin{equation} \label{eq:togol}
\dim_H( g_{\mathcal{L}}(\bg)) = \dim_H(\Gamma_n(\mathcal{L})), \quad \dim_P( g_{\mathcal{L}}(\bg)) = \dim_P(\Gamma_n(\mathcal{L})).
\end{equation}
We will bound the dimensions for the left hand side image sets.

Define an affine and a linear hyperplane of $\R^{n+1}$, parallel
to each other, by
\[
\mathcal{A}=\{ (x_1,\ldots,x_{n+1})\in\R^{n+1}:\; x_{n+1}=1 \},\quad
\mathcal{B}=\{ (x_1,\ldots,x_{n+1})\in\R^{n+1}:\; x_{n+1}=0 \}.
\] 
Define further a map $\Delta: \R^{n+1}\to \R^{n+1}$ by
\[
\Delta(\xx)=\begin{cases}
\frac{1}{x_{n+1}}\cdot (x_1,\ldots,x_{n},x_{n+1}),\qquad \text{if}\; \xx\in \R^{n+1}\setminus \mathcal{B}, \\
(x_1,\ldots,x_{n},0), \qquad\qquad\qquad \text{if}\; \xx\in \mathcal{B}.
\end{cases}
\]
Note that $\uxs=\iota(\ux)\in \mathcal{A}$ for any $\ux\in \R^n$
and $\Delta$ is just the identity on $\mathcal{B}$.
Obviously $\Delta$ maps $\R^{n+1}\setminus \mathcal{B}$
onto $\mathcal{A}$. We claim that
when restricting its domain to 
$g_{\mathcal{L}}(\mathcal{A})\setminus \mathcal{B}\supseteq g_{\mathcal{L}}(\bg)\setminus \mathcal{B}$, it is injective. Indeed,
clearly $g_{\mathcal{L}}(\mathcal{A})$ is an affine but not a linear subspace (as the image of an affine, 
non linear subspace under an
automorphism). Thus it has only a singleton as intersection with any line through the origin, proving the claim in view of 
the definition of $\Delta$.
Hence, as $\bg\subseteq \mathcal{A}$ 
and thus $g_{\mathcal{L}}(\bg)\subseteq g_{\mathcal{L}}(\mathcal{A})$, and as $\Delta$ is locally bi-Lipschitz on $\R^{n+1}\setminus \mathcal{B}$, writing $g_{\mathcal{L}}(\bg)\setminus \mathcal{B}$
as a countable union of sets with last coordinate bounded
away from $0$ in absolute value, by an easy
sigma-additivity argument for measures, we have
\[
\dim_H(\Delta( g_{\mathcal{L}}(\bg)\setminus \mathcal{B}))= \dim_H(  g_{\mathcal{L}}(\bg)\setminus \mathcal{B})
\]
and
\[
\dim_P(\Delta( g_{\mathcal{L}}(\bg)\setminus \mathcal{B}))= \dim_P(  g_{\mathcal{L}}(\bg)\setminus \mathcal{B}).
\]
On the other hand, on $\mathcal{B}$ the map $\Delta$ is just the identity. Thus together with \eqref{eq:togol} we easily conclude that
\begin{equation*}
\dim_H(\Delta( g_{\mathcal{L}}(\bg)))= \dim_H(\Gamma_n(\mathcal{L})),\qquad \dim_P(\Delta( g_{\mathcal{L}}(\bg)))= \dim_P(\Gamma_n(\mathcal{L})).
\end{equation*}
Obviously the same identities hold when restricting 
the left hand side sets,
containing only vectors with last coordinate either $0$ or $1$,
to the first $n$ coordinates (i.e. chopping off the 
last coordinate). In other words, if we let 
\[
\pi:\R^{n+1}\to \R^n, \qquad (x_1,\ldots,x_{n+1})\to (x_1,\ldots,x_n),
\]
the projection that reverses $\iota$, denoting this projected set by 
\[
\mathcal{U}_{\mathcal{L}}:= \pi(\Delta( g_{\mathcal{L}}(\bg)))\subseteq \R^n,
\] 
we have
\begin{equation}  \label{eq:ZWE}
\dim_H(\mathcal{U}_{\mathcal{L}})= \dim_H(\Gamma_n(\mathcal{L})),\qquad \dim_P(\mathcal{U}_{\mathcal{L}})= \dim_P(\Gamma_n(\mathcal{L})).
\end{equation}
Therefore it suffices to bound from above the Hausdorff and packing dimensions of $\mathcal{U}_{\mathcal{L}}$ as in Theorem~\ref{G}.

Let $\ux\in \Gamma_n(\mathcal{L})$ so that $\uxs\in \bg$ be arbitrary,
and $\qq\in \mathcal{L}\subseteq \Z^{n+1}$.
Then by definition of $g_{\mathcal{L}}$ we have
\begin{equation} \label{eq:teilchen}
| \uxs \cdot \qq| = | g_{\mathcal{L}}(\uxs)\cdot f_{\mathcal{L}}(\qq)|, 
\end{equation}
and by construction
\begin{equation}  \label{eq:ffF}
f_{\mathcal{L}}(\qq)\in \scp{\ee_1,\ee_2,\ee_{n+1}}_{\Z},
\end{equation}
in particular it is an integer vector. Moreover, as bijective linear
map $f_{\mathcal{L}}$ is bi-Lipschitz so that
\begin{equation} \label{eq:001}
\Vert f_{\mathcal{L}}(\qq) \Vert \asymp \Vert \qq\Vert,
\end{equation}
with some absolute implied constants.
Write $g_{\mathcal{L}}(\uxs)=(\zeta_1,\ldots,\zeta_{n+1})$
and let
\[
\uz= \pi( \Delta( g_{\mathcal{L}}(\uxs) ))=
\begin{cases}
(\zeta_1/\zeta_{n+1},\ldots,\zeta_n/\zeta_{n+1}), \quad\; \text{if} \;\; g_{\mathcal{L}}(\uxs)\notin \mathcal{B},\\
(\zeta_1, \ldots,\zeta_n), \qquad\qquad\qquad \text{if} \;\; g_{\mathcal{L}}(\uxs)\in \mathcal{B}.
\end{cases}
\]
By \eqref{eq:teilchen} we have 
\[
|\Delta(g_{\mathcal{L}}(\uxs))\cdot f_{\mathcal{L}}(\qq)|=
\frac{1}{|\zeta_{n+1}|}\cdot | g_{\mathcal{L}}(\uxs)\cdot f_{\mathcal{L}}(\qq)|= \frac{1}{|\zeta_{n+1}|}\cdot | \uxs\cdot \qq|,
\qquad \text{if} \; g_{\mathcal{L}}(\uxs)\notin \mathcal{B},
\]
and
\[
|\Delta(g_{\mathcal{L}}(\uxs))\cdot f_{\mathcal{L}}(\qq)|=
| g_{\mathcal{L}}(\uxs)\cdot f_{\mathcal{L}}(\qq)|=  | \uxs\cdot \qq|,
\qquad \text{if} \; g_{\mathcal{L}}(\uxs)\in \mathcal{B}.
\]
In any case,
we get that
\begin{equation} \label{eq:002}
|\Delta(g_{\mathcal{L}}(\uxs))\cdot f_{\mathcal{L}}(\qq)|\asymp  | \uxs\cdot \qq|
\end{equation}
where the implied constant is absolute on sets where $\zeta_{n+1}=0$
or $|\zeta_{n+1}|\in (v^{-1},v)$ for any given $v>1$. 

Combining \eqref{eq:001}, \eqref{eq:002} and as we may choose 
$\qq$ best approximations for $\ux$ and by definition
of $\uz$, we get that
\[
\wo(\uz)\ge \wo(\ux)\ge n.
\]
By the special form \eqref{eq:ffF} of the integer vectors $f_{\mathcal{L}}(\qq)$,
it is further clear that the projection of $\uz$ to the first two coordinates has the same property, i.e. 
\begin{equation} \label{eq:YZ}
\wo(\zeta_1/\zeta_{n+1}, \zeta_2/\zeta_{n+1})\ge n, \; (\uz\notin\mathcal{B})\qquad
\wo(\zeta_1, \zeta_2)\ge n,\; (\uz\in\mathcal{B}).
\end{equation}
For $v>1$, define parametric subsets of $\mathcal{U}_{\mathcal{L}}$ given as
\[
\mathcal{U}_{\mathcal{L}}(v):= \pi(\Delta(X_v)), \qquad 
X_v:= \{ \ux\in\R^n: |\zeta_{n+1}|\in \{0\}\cup (v^{-1},v) \}
\]
where $\zeta_{n+1}$ is the last coordinate of $g_{\mathcal{L}}(\uxs)$ as above. Then $\Delta$ is bi-Lipschitz on any $X_v$. Thus
again by the invariance of Hausdorff and
packing dimension under bi-Lipschitz maps, for any $v>1$,
the quantities $\dim_H(\mathcal{U}_{\mathcal{L}}(v))$ and 
$\dim_P(\mathcal{U}_{\mathcal{L}}(v))$ can be estimated
as in~\S~\ref{31} by precisely the same argument via
\eqref{eq:YZ} and \eqref{eq:yz}, \eqref{eq:ttre}. Since we may write
\[
\mathcal{U}_{\mathcal{L}}= \bigcup_{v>1, v\in\Z} \mathcal{U}_{\mathcal{L}}(v)
\]
as a countable union of such sets, again by sigma-additivity of measures the same estimates hold for $\mathcal{U}_{\mathcal{L}}$
and finally in view of \eqref{eq:ZWE} for $\Gamma_n(\mathscr{L})$ as well.
%
%
%
%

\begin{remark}
	We cannot conclude that $f_{\mathcal{L}}(\qq)\in\Z^{n+1}$ are best approximations for $\uz=\pi(\Delta(g_{\mathcal{L} }((\uxs)))\in\R^n$. However, it suffices for the argument that they induce approximations of order at least $n$.
\end{remark}

\section{Sketch of the Proof of Theorem~\ref{N} }

By a slightly more general version of Theorem~\ref{moschus}
from~\cite{ngm}, again for any element of
$\{(\xi_1,\ldots,\xi_{k-1})\in\R^{k-1}: \wo(\xi_1,\ldots,\xi_{k-1})>n  \}$, we get some full measure set
$F_{n,k}=F_{n,k}(\xi_1,\ldots,\xi_{k-1})\subseteq \R^{n-k+1}$
so that $\ux=(\xi_1,\ldots,\xi_n)$ lies in $\widetilde{\Gamma}_{n,k}$
for any $(\xi_k,\ldots,\xi_{n})\in F_{n,k}$.
Here $\widetilde{\Gamma}_{n,k}\subseteq \Gamma_{n,k}$ is defined likewise as $\widetilde{\Gamma}_n=\widetilde{\Gamma}_{n,3}$ from 
\S~\ref{s3}
with respect to the $k$-dimensional lattice $\mathcal{L}_{n,k}:= \scp{\ee_1,\ldots,\ee_{k-1},\ee_{n+1}}_{\Z}$. 
Conversely, very similarly as in~\S~\ref{31} we get
\begin{equation*} 
\widetilde{\Gamma}_{n,k}\subseteq \{ \ux\in\R^{k-1}: \wo(\ux)\ge n  \} \times \R^{n-k+1}.
\end{equation*}
Combining these properties 
with (I) of Lemma~\ref{Mars} and \eqref{eq:yz} yield the claims 
\eqref{eq:RHs}, \eqref{eq:Rhs}
on the Hausdorff dimension for the smaller sets $\widetilde{\Gamma}_{n,k}$.
Very similarly as in~\S~\ref{genc}, via rational automorphisms that map a given $k$-dimensional rational lattice $\mathcal{L}\subseteq \Z^{n+1}$ to $\mathcal{L}_{n,k}$,
we lift the upper bound to the larger set $\Gamma_{n,k}$.

Regarding packing dimension, we have
\[
\dim_P(\{ \ux\in\R^{k-1}: \wo(\ux)>n  \})= \dim_P(\{ \ux\in\R^{k-1}: \wo(\ux)\ge n  \})=k-2, \qquad 3\le k\le n,
\]
as can be seen via~\cite[Theorems~3.8 \& 4.9]{dfsu2} with a short calculation.
Thus, using the above observations on $\widetilde{\Gamma}_n$,
 we conclude with part (II) of Lemma~\ref{Mars} where $n_1=k-1, n_2=t=n-k+1$ (for lower bounds) and \eqref{eq:ttre} (for upper bounds) that 
\begin{equation} \label{eq:KKo}
\dim_P(\widetilde{\Gamma}_{n,k}) = (n-k+1)+(k-2)=n-1.
\end{equation}
Finally again
similarly as in~\S~\ref{genc} we can lift the upper bound to the sets
$\Gamma_{n,k}$, the reverse inequality being
a trivial consequence of \eqref{eq:KKo}, hence \eqref{eq:rhS} holds. 

\section{Proof of Theorem~\ref{Y} }

The lower bounds are clear by Theorem~\ref{H}, we need to prove the upper estimates.
Since there are only countably many finite subsets of 
sublattices of $\Z^{n+1}$ and 
by sigma-additivity of measures,
there is a subset $\mathcal{Z}^{n,k}\subseteq \mathcal{Y}^{n,k}$ 
with the property that 
$\dim_H(\mathcal{Y}^{n,k})=\dim_H(\mathcal{Z}^{n,k})$ and so that
the finite collections of $k$-dimensional sublattices $\mathcal{L}_1,
\ldots,\mathcal{L}_{i(\ux)}$ of $\Z^{n+1}$ are the same for any $\ux\in \mathcal{Z}^{n,k}$. So assume this set
of lattices $\mathcal{L}_1,\ldots, \mathcal{L}_i$ is fixed.
Clearly by pigeon hole principle for any $\ux\in \mathcal{Z}_n$ 
there is some lattice $\mathcal{L}_j$, $j=j(\ux)\in\{ 1,2,\ldots,i\}$
containing infinitely many best approximations.
Again by additivity of measures, 
it suffices to treat the case where this lattice is the same
for any $\ux\in \mathcal{Z}^{n,k}$. Without loss of generality 
we can assume it is $\mathcal{L}:=\mathcal{L}_1$. However, then 
within $\mathcal{L}$ we find infinitely many vectors inducing approximations of order $>\wo(\ux)-\varepsilon$. If 
$\mathcal{L}=\mathcal{H}_1=\scp{\ee_1,\ldots,\ee_{k-1}, \ee_{n+1}}_{\Z}$, then it follows from Dirichlet's Theorem \eqref{eq:diri}
that $\om(\xi_1,\ldots,\xi_{k-1})\ge \wo(\ux)\ge n$ for any $\ux=(\xi_1,\ldots,\xi_n)\in \mathcal{Z}^{n,k}$. 
Otherwise, we extend any rational linear map sending
any base of $\mathcal{L}$ bijectively to $\ee_1,\ldots,\ee_{k-1}, \ee_{n+1}$ to a rational automorphism of $\R^{n+1}$ and argue very similarly as in~\S~\ref{genc} to get a set of the same Hausdorff dimension
$\dim_H(\mathcal{Z}^{n,k})$ where this is the case. 
Hence \eqref{eq:yz} and a formula generalising a classical result of
Jarn\'ik~\cite{jarnik} to higher dimension, 
see for example~\cite{bdv}, imply
\[
\dim_H(\mathcal{Y}^{n,k})= \dim_H(\mathcal{Z}^{n,k})\le n-k+1+ \dim_H(\{ \ux\in\R^{k-1}: \om(\ux)\ge n \})= n-1+\frac{k}{n+1}.
\]

\section{ Proof of Theorem~\ref{H}: Upper bounds} \label{sieben}

The upper bounds in \eqref{eq:uc} and
\eqref{eq:DREI} follow immediately from Theorem~\ref{G} 
and \eqref{eq:absaty}. We are left with the proof of \eqref{eq:zzwei}.
For this we use a different strategy.
We show that for any $\ux=(\xi_1,\xi_2)\in \Theta^2$ we have
\begin{equation} \label{eq:fasto}
\min\{ \om(\xi_1), \om(\xi_2) \} \ge 4.
\end{equation}
If this is true then a classical metrical formula by Jarn\'ik~\cite{jarnik} and \eqref{eq:yz} indeed imply
\begin{equation} \label{eq:himp}
\dim_H(\Theta^2) \le 1 + \dim_H(\{ \xi\in\R: \om(\xi)\ge 4 \})=
1+\frac{2}{4+1}=\frac{7}{5}.
\end{equation}
Let $\qq=(q_1, q_2, q_3)\in\Z^3$ be a best approximation of large norm
for $\ux$. Without loss of generality we can 
assume $\qq\in \mathcal{H}_2$, thus $q_1=0$. 
Let $\mu$ be implictly defined by
\begin{equation} \label{eq:Cf}
|\qq\cdot \uxs|= |q_2\xi_2 + q_3|= \Vert \hat\qq\Vert^{-\mu}. 
\end{equation}
Then 
by Dirichlet's Theorem \eqref{eq:diri} and since 
$\qq$ is a best approximation, we have $\mu\ge 2$.
Then $-q_3/q_2$ is a convergent to $\xi_2$.
Note further that by \eqref{eq:Cf} and the theory of continued fractions, the next convergent has denominator at least $\Vert \hat\qq\Vert^{\mu}/2$ (see~\cite[Proposition~5.2]{JS}). 
Hence, any integer 
linear form $a\xi_2+b$ with $\max\{ |a|, |b| \}< \Vert \hat\qq\Vert^{\mu}/2$ 
satisfies $|a\xi_2+b|\ge \Vert \hat\qq\Vert^{-\mu}$. In other
words, there is no better approximation for $\ux$ within $\mathcal{H}_2$
up to norm $\Vert \hat\qq\Vert^{\mu}/2$.
On the other hand, 
by Dirichlet's Theorem 
there is some
$\pp=(p_1,p_2,p_3)\in\Z^{3}$ inducing 
\begin{equation} \label{eq:Qere}
0<\Vert \hat\pp\Vert\le 2\Vert \hat\qq\Vert^{\mu/2}, \qquad |\pp\cdot \uxs|\le \Vert \hat\pp\Vert^{-2}\le \frac{1}{4}\Vert \hat\qq\Vert^{-\mu} < \Vert \hat\qq\Vert^{-\mu}.
\end{equation}
By \eqref{eq:Cf} clearly $\pp\ne \qq$. We may assume $\pp$ is a best
approximation for $\ux$, so since $(\xi_1,\xi_2)\in\Theta^2$ and the above argument excludes $\pp\in \mathcal{H}_2$, we must have
$\pp\in \mathcal{H}_1$. Clearly $\Vert \hat\pp\Vert> \Vert \hat\qq\Vert$. 
Let $\rr\in\Z^3$ be the best approximation for $\ux$ following $\pp$.
By a very similar argument as above based on Dirichlet's Theorem 
for $\pp$ in place of $\qq$, 
we can now exclude
$\rr\in \mathcal{H}_1$, so we must have $\rr\in \mathcal{H}_2$.
But then $\Vert \rr\Vert\ge \Vert \hat\qq\Vert^{\mu}/2$ by the above
observation. Hence
there is no other best approximation between $\Vert \hat\pp\Vert$ 
and $\Vert \hat\qq\Vert^{\mu}/2$, 
so $\pp$ minimizes $|\uu\cdot \uxs|$
among all integer 
vectors $\uu\in\Z^3$ with $\Vert \hat\uu\Vert< \Vert \hat\qq\Vert^{\mu}/2$. On the other hand, again by Dirichlet's Theorem
\begin{equation} \label{eq:VZ}
0<\Vert \hat\vv\Vert \le \Vert \hat\qq\Vert^{\mu}/3, \qquad
|\vv\cdot \uxs|\le \Vert \hat\vv\Vert^{-2}\le \frac{1}{9}\Vert \hat\qq\Vert^{-2\mu} 
\end{equation}
has a solution $\vv\in\Z^3$. 
Again we can assume $\vv$ is a best approximation, hence
the above observation that there is no best approximation
with norm in $(\Vert \hat\pp\Vert,\Vert \hat\qq\Vert^{\mu}/2)$ implies $\vv=\pp$. Combining the right estimate
from \eqref{eq:VZ} with the left bound from 
\eqref{eq:Qere} and this happens for infinitely many
$\pp\in \mathcal{H}_1$ as above,
we see that $\om(\xi_1)\ge 4$. An analogous argument 
yields $\om(\xi_2)\ge 4$ as well, hence
\eqref{eq:fasto} is proved.

\begin{remark} \label{RR}
	The argument in fact shows that the best approximations
	in $\mathcal{H}_1$ and $\mathcal{H}_2$ must occur at some 
	high rate when \eqref{eq:fasto} is close to optimal. Using the variational principle~\cite{dfsu1, dfsu2}, with some effort
	some stronger bound in the interval $(1,7/5)$ can be obtained, however we omit its slightly techincal explicit calculation.
\end{remark}

\begin{remark}
	An analogous argument shows in general that $\ux=(\xi_1,\ldots,\xi_n)\in \Theta^n$
	implies
	\[
	\min\{ \om(\xi_1), \om(\xi_2) \} \ge n^2.
	\]
	We can conclude
	\[
	\Theta^n\subseteq \R^{n-2} \times \{ \xi\in\R: \om(\xi)\ge n^2 \}^2,
	\]
	and by \eqref{eq:yz} go on to estimate
	\begin{equation} \label{eq:TT}
	\dim_H(\Theta^n)\le n - 2+ \dim_H(\{ \xi\in\R: \om(\xi)\ge n^2 \}^2).
	\end{equation}
	However, by~\cite[Theorem~3.3]{JS}, for $n\ge 3$ the right hand side in \eqref{eq:TT} is at least $n-1$, in particular the right expression exceeds twice the single dimension
	of its factors. 
	Hence it seems the bound in \eqref{eq:uc}
	cannot be reached with this method. This argument 
	is most likely true for $n=2$ as well (see~\cite[Conjecture~3]{JS}), so just \eqref{eq:fasto}
	may be insufficient to improve on \eqref{eq:himp} either and
	the bound $1$ seems to be the optimal outcome of the method.
\end{remark}


\section{Proof of Theorem~\ref{H}: Lower bounds} 

\subsection{Outline}

We first show in~\S~\ref{s1} 
that for $n=2$, there exist vectors $(\xi_1,\xi_2)\in\R^2$
with properties (i)-(v), and with a slight twist of (vi).
The transition to general $n$ as well
as the weaker lower bound $n-2$ for the Hausdorff dimension in~\S~\ref{lbb} will then be an easy consequence of Theorem~\ref{moschus} above obtained in~\cite{ngm}. By modifications of the method, the stronger metrical claims will be proved in~\S~\ref{s4}, \ref{sp}. 

\subsection{Existence claim: Case $n=2$}   \label{s1}

In this section, we prove.

\begin{theorem}  \label{thm1}
	There exist uncountably many totally irrational $\ux\in \R^2$ for which
	the tail of the best approximation sequence with respect to the maximum norm lies in the union
	of the two 2-dimensional sublattices of $\Z^{3}$ given by
	\[
	\mathcal{H}_1= \{ (x,y,z)\in \Z^3: y=0 \}, \qquad \mathcal{H}_2= \{ (x,y,z)\in \Z^3: x=0 \}.
	\]
	Moreover, large best approximations alternately 
	lie in $\mathcal{H}_1$
	and $\mathcal{H}_2$. Furthermore, for any given $w\in[2,\infty]$ we can 
	choose $\ux$ so that additionally $\wo(\ux)=w$.
\end{theorem}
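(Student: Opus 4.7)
The plan is to construct $\ux=(\xi_1,\xi_2)$ explicitly by prescribing the continued fraction expansions of $\xi_1$ and $\xi_2$ simultaneously, and then to verify the alternating structure via Minkowski's Second Convex Body Theorem. First, I would inductively build integer sequences $q_1<Q_1<q_2<Q_2<\cdots$ together with numerators $p_k,P_k$ so that $p_k/q_k$ and $P_k/Q_k$ are convergents of $\xi_1$ and $\xi_2$ respectively with prescribed approximation rates
\[
|q_k\xi_1-p_k|\asymp Q_k^{-w},\qquad |Q_k\xi_2-P_k|\asymp q_{k+1}^{-w}.
\]
These asymptotics are enforced by choosing the partial quotients of $\xi_i$ appropriately large at each inductive step; the residual freedom in these choices produces uncountably many $\ux$, and a routine genericity argument secures total irrationality. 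The designated candidate best approximations are
\[
\qq_{2k-1}=q_k\ee_1-p_k\ee_3\in\mathcal{H}_1,\qquad \qq_{2k}=Q_k\ee_2-P_k\ee_3\in\mathcal{H}_2,
\]
which by design have strictly increasing norms $\|\hat\qq_j\|$ and strictly decreasing linear-form values.

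The next step is to show that no other integer vector intrudes as a best approximation. Within $\mathcal{H}_i$ alone, the classical convergent theory rules out any intermediate rational from beating the current candidate, once I arrange that no convergent of $\xi_i$ has denominator strictly between the prescribed ones. The delicate case is that of a \emph{mixed} integer vector $\bb=(b_1,b_2,b_3)$ with $b_1b_2\neq 0$. To exclude such intruders, I would apply Minkowski's Second Convex Body Theorem to the symmetric convex body
\[
K_k=\bigl\{\bb\in\R^3:\|\hat\bb\|_\infty\leq Q_k,\ |\bb\cdot\uxs|\leq|\qq_{2k-1}\cdot\uxs|\bigr\}
\]
of volume $\asymp Q_k^{2-w}$. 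The linearly independent pair $\qq_{2k-1},\qq_{2k}$ lies on the boundary of $K_k$, so the first two successive minima satisfy $\lambda_1=\lambda_2=1$, and Minkowski's inequality then forces $\lambda_3\asymp Q_k^{w-2}\to\infty$ for $w>2$; hence no third linearly independent integer vector can lie in $K_k$. The remaining lattice combinations $a\qq_{2k-1}+b\qq_{2k}$ lying in $K_k$ turn out to be only $\pm\qq_{2k-1},\pm\qq_{2k}$ and $\pm(\qq_{2k-1}-\sigma_k\qq_{2k})$ for a suitable sign $\sigma_k$; the last pair fails to be a best approximation since $\qq_{2k}$ shares its norm but has strictly smaller linear-form value, provided one secures $|\qq_{2k-1}\cdot\uxs|>2|\qq_{2k}\cdot\uxs|$ by keeping $q_{k+1}/Q_k$ bounded away from $1$.

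Finally, $\wo(\ux)=w$ follows from the scaling in the construction: at the parameter $Q=\|\hat\qq_{j+1}\|-1$ one has $Q^w|\qq_j\cdot\uxs|\asymp 1$, giving both the upper bound on $\wo$ and its sharpness, while Dirichlet's Theorem guarantees $\wo(\ux)\geq 2$ throughout. The main obstacle is the Minkowski step precisely at the boundary value $w=2$, where $\mathrm{vol}(K_k)$ does not decay and $\lambda_3$ is only guaranteed to be $O(1)$; this edge case calls either for a controlled limit through values $w'\downarrow 2$ or for a direct ad hoc inspection of the finitely many integer vectors admitted by $K_k$, using the explicit convergent structure of $\xi_1,\xi_2$ to check that none of them is a mixed best approximation.
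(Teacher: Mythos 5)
Your outline follows the same blueprint as the paper's proof of Theorem~\ref{thm1}: construct $\xi_1,\xi_2$ with interlacing ``very good'' convergents, designate the lifted convergent vectors as candidate best approximations, handle vectors outside the plane spanned by two consecutive candidates via Minkowski's Second Convex Body Theorem (paper's Lemma~\ref{lemur}), and handle vectors inside that plane by inspection. The Minkowski step in your proposal is fine: $\lambda_1\lambda_2\le 1$ and $\mathrm{vol}(K_k)\asymp Q_k^{2-w}$ give $\lambda_3\gg Q_k^{w-2}$ for $w>2$, so integer points of $K_k$ lie in $\scp{\qq_{2k-1},\qq_{2k}}_\R$.

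The gap is in the next step. From the Minkowski argument you may only conclude $\bb\in\scp{\qq_{2k-1},\qq_{2k}}_\R\cap\Z^3$; it does not follow that $\bb=a\qq_{2k-1}+b\qq_{2k}$ with $a,b\in\Z$. If $d:=\gcd(q_k,Q_k)>1$ and $d\mid(p_k+P_k)$, then $\tfrac1d(\qq_{2k-1}+\qq_{2k})=\bigl(q_k/d,\,Q_k/d,\,-(p_k+P_k)/d\bigr)$ is an integer vector in the plane that is \emph{not} an integer combination of $\qq_{2k-1},\qq_{2k}$; it has $\|\hat\bb\|\le Q_k$ and a possibly very small linear-form value, so it is not excluded by your case analysis. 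Thus your ``remaining lattice combinations $a\qq_{2k-1}+b\qq_{2k}$'' enumeration is incomplete without showing that $\{\qq_{2k-1},\qq_{2k}\}$ (and likewise $\{\qq_{2k},\qq_{2k+1}\}$) is a $\Z$-basis of the two-dimensional lattice it spans. The paper addresses exactly this point: it chooses $A_j=2^{\alpha_j}$, $B_j=3^{\beta_j}$, observes $F_j\equiv 1\bmod 2$, $G_j\equiv 1\bmod 3$, and proves Proposition~\ref{prop}, i.e.\ $\scp{\vv_j,\w_j}_\R\cap\Z^3=\scp{\vv_j,\w_j}_\Z$, via this coprimality. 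In the more flexible continued-fraction variant of~\S\ref{s4}, the paper likewise has to impose the coprimality condition~\eqref{eq:vieE} ($(s_{1,j},s_{2,j})=1=(s_{2,j},s_{1,j+1})$) and carries out a nontrivial prime-counting argument to show that the partial quotients can be chosen to satisfy it. Your construction has the freedom to add such a coprimality requirement during the induction, and doing so would close the gap; but as written the proposal silently assumes it. The issue you flag at $w=2$ is also only treated loosely in the paper (``small modifications''), so that is a shared loose end rather than a defect of your approach alone.
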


Note that the condition $\om(\ux)=w^2$ in (vi) is missing 
for a full analogue of Theorem~\ref{H}. Indeed, the vectors $\ux$
constructed in this section satisfy $\om(\ux)=w^2-1$ instead.

We construct our real vector. 
Let $\tau>1+\sqrt{2}$ be a parameter.
Let $\alpha_j, \beta_j$ be increasing positive integer sequences
and derive the integers
\[
A_j= 2^{\alpha_j}, \qquad B_j= 3^{\beta_j },
\]
satisfying
\begin{equation} \label{eq:accto}
A_{j+1}\asymp B_j^{\tau}, \qquad B_j\asymp A_j^{\tau}.
\end{equation}
Clearly such choices are possible. Then in particular
\begin{equation}  \label{eq:true}
A_{j+1}\asymp A_j^{\tau^2}, \qquad B_{j+1}\asymp B_j^{\tau^2}.
\end{equation}
Then, upon changing initial terms if necessary, we can assume
\begin{equation} \label{eq:vf}
0<\alpha_1 < \beta_1 < \alpha_2 < \beta_2 < \cdots , \qquad 
1<A_1 < B_1 < A_2 < B_2 < \cdots.
\end{equation}
Finally let $\ux=(\xi_1, \xi_2)$ with
\[
\xi_1= \sum_{j\ge 1} A_j^{-1 } , \qquad \xi_2= \sum_{j\ge 1} B_j^{-1 }.
\]
We claim that it satisfies the assertions of the theorem.

Put
\[
F_j= A_j(A_1^{-1}+\cdots+A_j^{-1})\in \mathbb{N}, \qquad G_j= B_j(B_1^{-1}+\cdots+B_j^{-1})\in\mathbb{N}.
\]
Then $F_j\equiv 1\bmod 2$
and $G_j\equiv 1\bmod 3$ imply the coprimality assertions
\begin{equation}  \label{eq:witt}
(A_j, F_j)=(F_j,2)=1, \qquad (B_j,G_j)=(G_j,3)=1.
\end{equation}
Let
\[
\vv_j=(A_j,0,-F_j)\in \mathcal{H}_1, \qquad 
\w_j=(0, B_j,-G_j)\in \mathcal{H}_2.
\]
Then obviously
\begin{equation} \label{eq:gl}
\Vert\hat\vv_j\Vert= A_j, \qquad \Vert\hat\w_j\Vert= B_j.
\end{equation}
Thus by \eqref{eq:vf} clearly
\begin{equation}  \label{eq:either}
\Vert\hat\vv_j\Vert< \Vert\hat\w_j\Vert< \Vert\hat\vv_{j+1}\Vert, \qquad j\ge 1.
\end{equation}
Then by \eqref{eq:true} moreover 
\begin{equation} \label{eq:ti}
|\vv_j\cdot \uxs|= |A_j \cdot \xi_1 +0\cdot \xi_2 - F_j|= A_j(A_{j+1}^{-1}+A_{j+2}^{-1}+\cdots) \asymp A_j A_{j+1}^{-1}\asymp A_j^{-(\tau^2-1) }
\end{equation}
and
\begin{equation} \label{eq:tz}
|\w_j\cdot \uxs|= |0\cdot \xi_1 + B_j \cdot \xi_2 - G_j|= B_j(B_{j+1}^{-1}+B_{j+2}^{-1}+\cdots)\asymp B_jB_{j+1}^{-1}\asymp B_j^{-(\tau^2-1) },
\end{equation}
are small linear form for $j\ge 1$, when $\tau$ is large.
We show that $\vv_j$ and $\w_j$ precisely comprise all large best approximations. This obviously finishes the proof.

Let $\bb$ be any best approximation. Then by \eqref{eq:either} 
there is an index $j$
such that either $\Vert\hat\vv_j\Vert\le \Vert \hat\bb\Vert<\Vert\hat\w_j\Vert$ or $\Vert\hat\w_j\Vert\le \Vert \hat\bb\Vert< \Vert\hat\vv_{j+1}\Vert$.  We show that in the first case $\bb=\pm \vv_j$, and in the latter case $\bb=\pm \w_j$. 
Assume the first case, so
\begin{equation} \label{eq:T}
\Vert \hat\vv_j\Vert\le \Vert \hat\bb\Vert< \Vert \hat\w_j\Vert.
\end{equation}
 the latter works very similarly by symmetry.
First observe that since $\bb$ is a best approximation
of norm at least $\Vert\hat\vv_j\Vert$, we know that
\begin{equation}  \label{eq:see}
|\bb\cdot \uxs|\le |\vv_j\cdot \uxs|.
\end{equation}
We distinguish two cases.

Case 1: $\bb$ lies in the two-dimensonal subspace 
of $\R^3$ spanned by $\vv_j, \w_j$, i.e. 
$\bb\in \scp{\vv_j, \w_j}_{\mathbb{R}}\cap \mathbb{Z}^3$. The special form of $A_j, B_j$ and \eqref{eq:witt} imply the following crucial result on integer
vectors in the two-dimensional lattices $\scp{\vv_j, \w_j}_{\mathbb{R}}\cap \mathbb{Z}^3$.

\begin{proposition}  \label{prop}
	For $\vv_j, \w_j$ as above, if a linear combination
	$g\vv_j+h\w_j$ is an integer vector, then 
	in fact $g\in\Z$ and $h\in \Z$. In other words,
	$\scp{\vv_j, \w_j}_{\mathbb{R}}\cap \mathbb{Z}^3=\scp{\vv_j, \w_j}_{\mathbb{Z}}$.
\end{proposition}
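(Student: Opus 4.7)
The plan is to reduce to arithmetic of denominators coordinate by coordinate, and then extract a contradiction from the $p$-adic valuations of the third coordinate at $p=2$ and $p=3$.

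First I would look at the first two coordinates of $g\vv_j + h\w_j = (gA_j,\, hB_j,\, -gF_j - hG_j)$. Membership in $\Z^3$ forces $gA_j\in\Z$ and $hB_j\in\Z$. Write $g=p/q$ and $h=r/s$ in lowest terms (if $g=0$ or $h=0$, that coordinate is trivially an integer and there is nothing to prove). Then $q\mid A_j = 2^{\alpha_j}$ and $s\mid B_j = 3^{\beta_j}$, so $q=2^a$ with $0\le a\le \alpha_j$ and $s=3^b$ with $0\le b\le \beta_j$, where $p$ is odd whenever $a\ge 1$ and $r$ is coprime to $3$ whenever $b\ge 1$.

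Now the third coordinate gives the key relation
\[
gF_j + hG_j \;=\; \frac{pF_j\cdot 3^{b} + rG_j\cdot 2^{a}}{2^{a}\,3^{b}} \;\in\; \Z,
\]
so $2^{a}3^{b}$ divides the numerator. Reading this divisibility modulo $2^{a}$ yields $2^{a}\mid pF_j\cdot 3^{b}$. The crucial point is that \eqref{eq:witt} gives $(F_j,2)=1$, i.e., $F_j$ is odd; combined with $p$ odd (when $a\ge 1$) and $3^b$ odd, the product $pF_j\cdot 3^b$ is odd, forcing $a=0$. By the symmetric argument modulo $3^{b}$, the coprimality $(G_j,3)=1$ together with $r$ coprime to $3$ (when $b\ge 1$) and $\gcd(2^a,3)=1$ forces $b=0$.

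Thus $q=s=1$, so $g=p\in\Z$ and $h=r\in\Z$, which is precisely the claim $\scp{\vv_j,\w_j}_{\R}\cap \Z^3 = \scp{\vv_j,\w_j}_{\Z}$. I do not anticipate any real obstacle here: the argument is short and purely $p$-adic, and the whole point of the construction was to engineer the base-$2$ and base-$3$ expansions of $\xi_1,\xi_2$ so that exactly these coprimality properties \eqref{eq:witt} hold. The only minor care needed is to separate the cases $g=0$ or $h=0$ from the generic case so that the ``lowest terms'' writing is justified.
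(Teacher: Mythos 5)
Your proof is correct and follows essentially the same route as the paper's: reduce via the first two coordinates to $q = 2^a$ and $s = 3^b$, then use the coprimality conditions \eqref{eq:witt} on the third coordinate to force $a=b=0$. You merely make explicit the $p$-adic divisibility computation that the paper dismisses with "clearly."
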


\begin{proof}
Clearly we must have $g,h\in \mathbb{Q}$. 
If we write $(p_1/q_1)\vv_j + (p_2/q_2)\w_j$ with $p_i/q_i$
in lowest terms, then 
it is clear that $q_1$ must be a non-negative integer power of $2$ and $q_2$ a non-negative integer power of $3$ to make the first two coordinates $(p_1/q_1)A_j=(p_1/q_1)2^{\alpha_j}$ resp. $(p_2/q_2)B_j=(p_2/q_2)3^{\beta_j}$ of $g\vv_j+h\w_j$ integers. 
But then by \eqref{eq:witt}
clearly the third coordinate $(p_1/q_1)F_j+(p_2/q_2)G_j$ is not an integer unless
$q_1=q_2=1$.  
\end{proof}

By the proposition applied to $\bb$ and since $\Vert \hat\bb\Vert<\Vert\hat\w_j\Vert$ obviously we must
have $h=0$. Hence
$\bb=g\vv_j$ is an integer multiple of $\vv_j$, but 
since $\bb$ is a best approximation and thus primitive
this integer must be $g=\pm 1$. Hence indeed
$\bb=\pm \vv_j$. The case $\Vert\hat\w_j\Vert\le \Vert \hat\bb\Vert< \Vert\hat\vv_{j+1}\Vert$ works very
similarly by symmetry and yields for the best approximation 
the only candidates $\pm \w_j$.

Case 2: $\bb$ does not lie in the space spanned by $\vv_j, \w_j$.
For this case we use an easy consequence of Minkowski's Second Convex
Body Theorem.

\begin{lemma}  \label{lemur}
	There exists a constant $c>0$ such that for any $\ux\in \R^2$ and
	any parameter $Q\ge 1$, the system
	\[
	|b_1|\le Q,\qquad |b_2|\le Q,\qquad  |b_1 \xi_1 + b_2 \xi_2+b_{3}| < cQ^{-2}
	\]
	does not have three linearly independent solutions in integer
	vectors $\bb=(b_1,b_2,b_3)$.
\end{lemma}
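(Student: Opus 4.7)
The plan is to recognize the system as defining a centrally symmetric convex body in $\R^3$ and apply Minkowski's Second Convex Body Theorem, specifically its lower bound on the product of successive minima. First I would define
\[
K_Q= \{ (b_1,b_2,b_3)\in \R^3:\ |b_1|\le Q,\ |b_2|\le Q,\ |b_1\xi_1+b_2\xi_2+b_3|\le cQ^{-2}\},
\]
which is a closed, centrally symmetric convex parallelepiped. An immediate volume computation -- after the determinant-one change of variables $(b_1,b_2,b_3)\mapsto (b_1,b_2,\,b_1\xi_1+b_2\xi_2+b_3)$ which shears the last coordinate -- shows that $\mathrm{vol}(K_Q)=(2Q)^2\cdot 2cQ^{-2}=8c$. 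Crucially this is independent of both $Q\ge 1$ and of $\ux\in\R^2$.

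Next I would invoke Minkowski's Second Theorem applied to $K_Q$ and the lattice $\Z^3$. Letting $\lambda_1\le \lambda_2\le \lambda_3$ denote the successive minima, the lower bound gives
\[
\lambda_1\lambda_2\lambda_3\cdot \mathrm{vol}(K_Q)\ \ge\ \frac{2^3}{3!}\ =\ \frac{4}{3},
\]
which upon substituting the volume becomes $\lambda_1\lambda_2\lambda_3\ge 1/(6c)$. Fixing once and for all any absolute constant $c<1/6$ (for instance $c=1/7$) forces $\lambda_1\lambda_2\lambda_3>1$, whence $\lambda_3^3\ge \lambda_1\lambda_2\lambda_3>1$ and therefore $\lambda_3>1$.

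Finally, by the very definition of the third successive minimum, $\lambda_3>1$ means that $K_Q$ contains no three $\R$-linearly independent integer vectors. Any integer triple satisfying the strict system of the statement certainly lies in $K_Q$, so such a triple of linearly independent integer solutions cannot exist, which is exactly the claim of the lemma. There is no genuine obstacle in this argument; the only points requiring care are the volume computation and the numerical choice of $c$, and the estimate is uniform in $\ux$ and $Q$ because $\mathrm{vol}(K_Q)$ depends on neither.
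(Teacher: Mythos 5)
Your proof is correct and uses exactly the same idea as the paper: apply Minkowski's Second Convex Body Theorem to the box of volume $8c$. One remark is that the paper's own sketch appears to have the inequality directions garbled. It states that ``the product of the induced successive minima is $\ll c$'' and that ``the third successive minimum is smaller than $1$,'' from which the claimed conclusion (no three linearly independent integer points in the box) would not follow — $\lambda_3<1$ would mean precisely that such points \emph{do} exist. The correct deduction is the one you give: the lower bound of Minkowski's Second Theorem yields $\lambda_1\lambda_2\lambda_3 \ge 2^3/(3!\cdot 8c)=1/(6c)$, so for any fixed $c<1/6$ one has $\lambda_1\lambda_2\lambda_3>1$, forcing $\lambda_3>1$ and therefore excluding three linearly independent integer solutions. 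Your choice $c=1/7$ and the observation that the volume $8c$ is uniform in $Q$ and $\ux$ are both exactly what is needed.
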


\begin{proof}
Consider the integer lattice $\Z^3$ and the box of $(x_1,x_2,x_3)\in \R^3$ 
with coordinates
\[
|x_1|\le Q, \qquad |x_2|\le Q,\quad  |\xi_1 x_1+\xi_2 x_2 + x_3|\leq cQ^{-2}.
\]
It has volume $8c$, independent of $Q$ and $\xi_1, \xi_2$. Hence, by Minkowski's Second Convex
Body Theorem, the product of the induced successive minima 
is $\ll c$, hence choosing $c$ small enough the third successive minimum is smaller than $1$. This means there
cannot be three linearly independent integer points within the box, which in turn is equivalent to the claim.
\end{proof}

We first notice that both $\vv_j$ and $\w_j$ induce approximations of order greater than two. By \eqref{eq:gl}, \eqref{eq:ti}
and as our choice of $\tau>1+\sqrt{2}$
that implies $(\tau^2-1)/\tau>2$, for some $\epsilon=\epsilon(\tau)>0$
we get
\begin{equation} \label{eq:eins}
|\vv_j\cdot \uxs| \asymp A_j^{-(\tau^2-1)} \asymp B_j^{-(\tau^2-1)/\tau} < B_j^{-2-\epsilon}=
\Vert \hat\w_j\Vert^{-2-\epsilon} 
\end{equation}
and for $\w_j$ by \eqref{eq:tz} we have a stronger estimate that also yields
\begin{equation}  \label{eq:zwei}
|\w_j\cdot \uxs| \asymp B_j^{-(\tau^2-1)} < B_j^{-2-\epsilon} = \Vert \hat\w_j\Vert^{-2-\epsilon}.
\end{equation}
Combined with \eqref{eq:either}, \eqref{eq:T}, \eqref{eq:see}, we have 
\[
\max\{ \Vert \hat\bb\Vert, \Vert \hat\vv_j\Vert, \Vert \hat\w_j\Vert \}= \Vert \hat\w_j\Vert, \qquad \max\{|\bb\cdot \uxs|, |\vv_j\cdot \uxs|,|\w_j\cdot \uxs| \} \ll \Vert \hat\w_j\Vert^{-2-\epsilon}.
\]
By the assumptions of Case 2, the three vectors
$\vv_j, \w_j, \bb$ are linearly independent.
So we get a contradiction
to Lemma~\ref{lemur} for $Q=\Vert \hat\w_j\Vert$, as soon as $\Vert \hat\w_j\Vert$ is sufficiently large. Hence in total Case 2 provides only finitely many best approximations, of small norm.

Combining our observations from Case 1 and Case 2, 
we see that $\vv_j$ and $\w_j$ comprise
all best approximations of large enough norm, as desired. Moreover it is clear
that any $\ux$ as above is totally irrational and by the freedom 
in the choice of $A_i, B_i$ the set of induced $\ux$ is uncountable.
Finally it is easy to check from \eqref{eq:eins}, \eqref{eq:zwei},
the fact that $\vv_j, \w_j$ comprise all the best approximations
and \eqref{eq:gl} that
\begin{equation} \label{eq:wdach}
\wo(\ux)= \frac{\tau^2-1}{\tau}, \qquad \om(\ux)= \tau^2-1.
\end{equation}
So choosing $\tau>1+\sqrt{2}$ appropriately, we can realize any uniform
exponent in $(2,\infty)$. Finally, small modifications of the construction allow for obtaining the endpoints $2$ and $\infty$ as well.

\subsection{General case and lower bound $\dim_H(\Theta^n)\ge n-2$ }  \label{lbb}

As indicated before,
the extension to the general case works with Theorem~\ref{moschus}.
In view of the sufficient condition \eqref{eq:U} and \eqref{eq:wdach}, the lower bound 
\[
\dim_H(\Theta^n) \ge n-2
\] 
follows 
by taking any
$\xi_1, \xi_2$ constructed in~\S~\ref{s1} upon increasing
$\tau$ if necessary, and extending it to $n$-dimensional real
vectors via Theorem~\ref{moschus} (we do not need Lemma~\ref{Mars} here).
As noticed in~\S~\ref{MO} any arising $\ux\in\R^n$ is automatically
totally irrational since $(\xi_1, \xi_2)$ has this property.
In fact the same bound holds 
when restricting to vectors with arbitrary uniform 
exponent $\wo(\ux)=w\in [n,\infty]$, similar as in $\Theta_n(w)$ 
but with some altered value for the ordinary exponent 
$\om(\ux)$ in terms of $w$.

To improve the bound $n-2$, in the next section we 
generalize the construction of~\S~\ref{s1} to obtain some Cantor type
set with the properties of Theorem~\ref{H}, and determine a stronger lower bound for its Hausdorff dimension using results from~\cite{sun, dfsu1, dfsu2}.

\subsection{Proof of \eqref{eq:theclaim}, up to strictness }  \label{s4}

In this section, we show the improved lower bound 
\begin{equation} \label{eq:nonicht}
\dim_H(\Theta_n(w))\ge n-2+\frac{1}{w^2+1}, 
\end{equation}
and thus as $w$ can be arbitrarily close to $n$ also
$\dim_H(\Theta^n)\ge n-2+1/(n^2+1)$. Up to 
the latter inequality not being strict yet, this agrees with claim
\eqref{eq:theclaim} in Theorem~\ref{H}.

For $\tau>n$,
we now consider modified $\xi_1, \xi_2$, with sequences
of continued fraction convergents $(r_{i,j}/s_{i,j})_{j\ge 1}$, $i=1,2$, with the following denominator growth properties:
\begin{equation} \label{eq:YE}
\lim_{j\to\infty} \frac{ \log s_{2,j} }{\log s_{1,j}} = \tau, \qquad  \lim_{j\to\infty} \frac{ \log s_{1,j+1} }{\log s_{2,j}} = \tau.
\end{equation}
Then
\begin{equation}  \label{eq:tirE}
\lim_{j\to\infty} \frac{ \log s_{i,j+1} }{\log s_{i,j}} = \tau^2, \qquad i=1,2.
\end{equation}
We then follow the proof above with 
$A_{i}$ resp. $B_i$ replaced by $s_{1,j}$ resp. $s_{2,j}$,
and $F_{i}$ resp. $G_i$ replaced by $r_{1,j}$ resp. $r_{2,j}$,
and we replace $\vv_j, \w_j$ by
\begin{equation} \label{eq:gjhj}
\g_{j}=s_{1,j}\ee_{1}-r_{1,j}\ee_{n+1}\in\Z^{n+1},\qquad \h_{j}=s_{2,j}\ee_{2}-r_{2,j}\ee_{n+1}\in\Z^{n+1}. 
\end{equation}
In order to establish an analogue of Proposition~\ref{prop}, 
we need coprimality conditions for the denominators, concretely
it suffices to guarantee
\begin{equation}  \label{eq:vieE}
(s_{1,j}, s_{2,j})= 1=(s_{2,j}, s_{1,j+1}), \qquad\quad j\ge 1.
\end{equation}
Write $\mathcal{K}(\tau)$ for the set of $(\xi_1,\xi_2)\in\R^2$ 
satisfying \eqref{eq:YE}, \eqref{eq:tirE}, \eqref{eq:vieE}.

Note that now 
by the theory of continued fractions
\[
|s_{1,j}\xi_1-r_{1,j}| \asymp s_{1,j}^{-\tau^2}, \qquad |s_{2,j}\xi_1-r_{2,j}| \asymp s_{2,j}^{-\tau^2},
\]
slightly stronger than in~\S~\ref{s1} where the approximations
were of order $\tau^2-1$. Together with \eqref{eq:YE} it follows
easily that
$\wo(\xi_1,\xi_2)\ge \tau^2/\tau=\tau$. Thus if $\tau>n$,
we can apply Theorem~\ref{moschus} again to see that for
any $(\xi_1,\xi_2)\in \mathcal{K}(\tau)$ there is
a full measure set $F_n(\xi_1,\xi_2)\subseteq \R^{n-2}$, so for
any $(\xi_3,\ldots,\xi_n)\in F_n(\xi_1,\xi_2)$ the vector
$(\xi_1,\ldots,\xi_n)$ has essentially 
the same best approximations as $(\xi_1, \xi_2)$ (with zeros
added for entries at positions $3,4,\ldots,n$). 
As in Case 1 in~\S~\ref{s1}, via an analogous claim to Proposition~\ref{prop} for $\g_j, \h_j$, 
we can conclude that any 
best approximation within the space spanned by a pair $\g_j, \h_j$ or
a pair $\h_j,\g_{j+1}$ is actually equal to $\g_j$ or $\h_j$.
Moreover, similar to Case 2 in~\S~\ref{s1} by $\wo(\xi_1,\xi_2)\ge \tau>n\ge 2$ via Lemma~\ref{lemur} we see that large best approximations must lie in such spaces,
hence the large best approxmations are precisely the $\g_j,\h_j$.

Thus we have that the $\g_j$ and $\h_j$ again comprise
all best approximations for any $\ux$ as above,
i.e. $(\xi_1, \xi_2)\in\mathcal{K}(\tau)$ and 
$(\xi_3,\ldots,\xi_n)\in F_n(\xi_1,\xi_2)$. 
Moreover it is clear that
\[
|\g_j\cdot \uxs|=|s_{1,j}\xi_1-r_{1,j}| \asymp s_{1,j}^{-\tau^2}= \Vert \hat\g_j\Vert^{-\tau^2}, \quad |\h_j\cdot \uxs|=|s_{2,j}\xi_1-r_{2,j}| \asymp s_{2,j}^{-\tau^2}= \Vert \hat\h_j\Vert^{-\tau^2},
\]
hence $\wo(\ux)=\tau$ and
$\om(\ux)=\tau^2$, so (vi) holds as well. Thus any vector of the form $(\xi_1,\xi_2)\times F_n(\xi_1,\xi_2)$ with $(\xi_1,\xi_2)\in \mathcal{K}(\tau)$ is contained in $\Theta_n(\tau)$. By (I) of Lemma~\ref{Mars} with $M=\Theta_n(\tau)$ and $n_1=2, n_2=t=n-2$ we conclude
\[
\dim_H(\Theta_n(\tau))\ge n-2+ \dim_H(\mathcal{K}(\tau)),\qquad n\ge 2,\;\tau>n.
\]
To finish the proof of \eqref{eq:nonicht},
we identify $w$ with $\tau$ and show 
\begin{equation} \label{eq:novi}
\dim_H(\mathcal{K}(\tau))
\ge \frac{1}{\tau^2+1}, \qquad n\ge 2,\; \tau>n.
\end{equation}
As $\tau$ can be taken arbitrarily close
to $n$, the claimed lower bound for $\Theta^n$ follows as well. 

To show \eqref{eq:novi}, we notice first that by Sun~\cite{sun} (alternatively
this follows easily from the variational principle~\cite{dfsu1, dfsu2}, see also~\cite{tan})
each single set of $\xi_1\in\R$ inducing \eqref{eq:tirE}
for $i=1$ has precisely this Hausdorff dimension $1/(\tau^2+1)$.
Let $\Pi(x,y)=x$ be the projection $\R^2\to \R$ to the first coordinate.
Assume we have already shown that
\begin{equation}  \label{eq:EE}
\Pi(\mathcal{K}(\tau)) \supseteq \{ \xi_1\in\R: \eqref{eq:tirE}\;  holds\; for \; i=1 \}.
\end{equation}
Then,
since the projection of a set has at most the Hausdorff dimension of the original set (by Lipschitz property of projections~\cite{falconer}), the claim \eqref{eq:novi} follows via
\begin{equation} \label{eq:middle}
\dim_H(\mathcal{K}(\tau))\ge \dim_H(\Pi(\mathcal{K}(\tau)))\ge
\dim_H(\{ \xi_1\!\in\!\R: \eqref{eq:tirE}\;  holds\; for \; i=1 \}) = \frac{1}{\tau^2+1}.
\end{equation}
We verify \eqref{eq:EE} to finish the proof. We need
to show that for any $\xi_1$ as above (i.e.
with condition \eqref{eq:tirE} for $i=1$),
there exists $\xi_2\in\R$ as above, that may depend on $\xi_1$, i.e. so that conditions \eqref{eq:YE} and \eqref{eq:vieE} hold as well 
(then \eqref{eq:tirE} holds for $i=2$ as well; in fact there is set identity in \eqref{eq:EE}).
So let arbitrary $\xi_1$ with property \eqref{eq:tirE} for $i=1$
with convergent sequence $(r_{1,j}/s_{1,j})_{j\ge 1}$
be given. Assume we have constructed the partial
quotients of $\xi_2$ up to convergent $r_{2,j-1}/s_{2,j-1}=[a_{2,0};a_{2,1},\ldots,a_{2,j-1}]$ for given $j$, and let $a_{2,j}$ be the next partial quotient for $\xi_2$
to be fixed. 
We may assume $s_{2,j-1}< s_{1,j}$ in view of \eqref{eq:YE}.
From the recursion
\begin{equation} \label{eq:reku}
s_{2,j-2}+a_{2,j}s_{2,j-1}=s_{2,j}
\end{equation} 
for convergent denominators,
there are
\begin{equation}  \label{eq:tima}
\gg \frac{s_{1,j}^{\tau}}{s_{2,j-1}}\ge \frac{s_{1,j}^{\tau}}{s_{1,j}}= s_{1,j}^{\delta}, \qquad \delta=\tau-1>0
\end{equation}
many consecutive partial quotients $a_{2,j}$ that induce $s_{2,j-2}+a_{2,j}s_{2,j-1}=s_{2,j}\asymp s_{1,j}^{\tau}$, as we need for \eqref{eq:YE} in the current step. We must show some of them
induce \eqref{eq:vieE} as well.

On the other hand, by \eqref{eq:tirE} for $i=1$, there are $\ll \log (s_{1,j}s_{1,j+1})\ll \log s_{1,j}$ many primes
dividing either $s_{1,j}$ or $s_{1,j+1}$. Denote $S_j=\{ p\in\mathbb{P}: p|(s_{1,j}s_{1,j+1}) \}$ the set
of such primes. For condition \eqref{eq:vieE} to hold 
in the current step, it suffices
that any $p\in S_j$ does not divide $s_{2,j}$.
Now, for any such prime $p\in S_j$, again by the recursion 
\eqref{eq:reku} and $(s_{2,j-2},s_{2,j-1})=1$
we see that at most one congruence
class modulo $p$ for $a_{2,j}$ will induce $p|s_{2,j}$. 
Hence by Chinese
Remainder Theorem and estimate \eqref{eq:tima}, there exist
\begin{equation}  \label{eq:latf}
\gg s_{1,j}^{\delta}\cdot \prod_{p\in S_j } (1-p^{-1})  
\end{equation}
many partial quotients $a_{2,j}$ for which $s_{2,j}$ is not divisible 
by any such prime. If this is at least $1$ for large $j$, we are done.
However, again by \eqref{eq:tirE} and a standard estimate for the number of prime divisors of an integer, the cardinality of $S_j$ can be bounded
\[
\sharp S_j \ll \frac{\log (s_{1,j}s_{1,j+1})}{\log\log (s_{1,j}s_{1,j+1})}\ll \log s_{1,j}.
\] 
Hence
we may estimate the latter factor of \eqref{eq:latf}
\[
\prod_{p\in S_j } (1-p^{-1}) < \prod_{n=2}^{ \lfloor\log s_{1,j}\rfloor } (1-n^{-1})= \exp(\sum_{n=2}^{ \lfloor\log s_{1,j}\rfloor } \log(1-n^{-1}) )\ll \exp(-\sum_{n=2}^{ \lfloor\log s_{1,j}\rfloor } n^{-1} )
\]
which gives 
\[
\prod_{p\in S_j } (1-p^{-1})\ll \exp(-\log\log s_{1,j})= (\log s_{1,j})^{-1}.
\]
Hence as $\delta>0$ indeed there remain many suitable $a_{2,j}\in S_j$ in each step so that condition \eqref{eq:vieE} holds as well, and thus \eqref{eq:EE} is true. We have proved the claims of the section.

\begin{remark}  \label{r3}
	The right equality of \eqref{eq:middle} 
	and Lemma~\ref{Mars}, and as it is reasonable to expect that condition \eqref{eq:vieE} is metrically negligible, 
	suggest the bound $n-2+2/(n^2+1)$ stated in Conjecture~\ref{c1}.
	A slightly stronger conjectural bound, still of order $n-2+2n^{-2}-O(n^{-4})$ for large $n$, is motivated in~\S~\ref{sp} below.
\end{remark}

\subsection{Proof of strict inequality in \eqref{eq:theclaim} and lower bounds in \eqref{eq:DREI} } \label{sp}

A small improvement of the bound 
compared to~\S~\ref{s4} 
(and likewise presumably for Conjecture~\ref{c1})
can be made when extending the sets $\mathcal{K}(\tau)\subseteq \R^2$ to larger sets. We then instead of the formula from~\cite{sun}
apply the variational
principle from~\cite{dfsu1, dfsu2} for $m=n=1$ (approximation to a single real number) to estimate their Hausdorff and packing dimension, 
and finally conclude with Theorem~\ref{moschus} and
Lemma~\ref{Mars} again.  We use the template formalism from~\cite{dfsu1, dfsu2}. The general definition of 
templates can be found in~\cite[Defintion~4.1]{dfsu2}, however
in our easiest setting, a template $f=(f_1,f_2)$ consists just two piecewise linear, continuous functions $f_i(t): [0,\infty)\to \R$ with the following properties: $f_1(0)=f_2(0)=0$,
$f_2(t)=-f_1(t)$ and $f_1(t)\le 0\le f_2(t)$ for all $t\ge 0$, 
slopes among $\{-1,0,1\}$, where $0$ is only possible on intervals where $f_1(t)=f_2(t)=0$. Moreover, any local maximum of $f_1$ is a local minimum of $f_2$, hence $f_1(t)=f_2(t)=0$ at such points.

Given $\tau>n$ and $\epsilon>0$, consider the set of $\xi_1\in \R$
that induce a template $f=f_{\tau,\epsilon}=(f_1,f_2)=(f_1, -f_1)$
where the function $f_1(t)$ has periodic pattern as in Figure~1 below. By induce, we mean the exact first successive minimum function $h_1(t)$ (see~\cite[(4.1)]{dfsu2}) has bounded distance from $f_1(t)$ in supremum norm on $[0,\infty)$. 

\begin{tikzpicture}

\draw[->] (0,0) -- (13,0) node[anchor=west] {t};
\draw[thick,->] (0,-3) -- (0,2) node[anchor=south] {$f_1(t)$};

\draw [line width=0.45mm] (2,0) -- (4,-2) ;

\draw [line width=0.45mm] (4,-2) -- (6,0) ;

\draw [line width=0.45mm] (11,0) -- (5.98,0) ;

\fill[black] (2,0) circle (0.06cm) node[anchor=south] {$t_{j}$};

\fill[black] (4,0) circle (0.06cm) node[anchor=south] {$\frac{n\tau+1+\epsilon}{2}t_{j}$};

\fill[black] (6,0) circle (0.06cm) node[anchor=south] {$(n\tau+\epsilon) t_{j}$};

\fill[black] (11,0) circle (0.06cm) node[anchor=south] {$t_{j+1}=\tau^2 t_{j}$};

\node at (4.4,-2.5) { $f_1$};

\node at (5.3,-1.4) { $+1$};

\node at (2.6,-1.4) { $-1$};

\node at (8.3,-0.4) { $0$};

\node at (6,-3.8)  {Figure 1: Sketch period $[t_j,t_{j+1}]$ and slopes of $f_1$ of $1\times 1$-template for $\xi_1$ };

\end{tikzpicture}

Let $\mathcal{S}_{\tau,\epsilon}$ be the set of $\xi_1\in\R$ inducing 
the template $f_{\tau,\epsilon}$ of Figure~1, 
with arbitrary starting point $t_1>0$ (on $[0,t_1]$ we can let
$f_1(t)=f_2(t)=0$).
For any such $\xi_1\in \mathcal{S}_{\tau,\epsilon}$,
by slight abuse of notation,
there is an integer sequence $s_{1,j}\asymp \exp(t_j)$, $j\ge 1$, 
thus in particular
with the property $s_{1,j+1}\asymp s_{1,j}^{\tau^2}$, 
inducing small values $|s_{1,j}\xi_1-r_{1,j}|$ 
(then $r_{1,j}/s_{1,j}$ are convergents to $\xi_1$, but do not comprise all of them) 
that induce the slope $-1$ of the template in the interval
$(t_j, t_j(n\tau+1+\epsilon)/2)$. 
The $s_{1,j}$ depend on the choice of 
$\xi_1\in \mathcal{S}_{\tau,\epsilon}$.
For the moment, take any $\xi_2$ as in~\S~\ref{s4}, i.e. 
so that (all) its
convergent denominators $s_{2,j}$ satisfy \eqref{eq:YE}.
Note that $\xi_2$ does not depend on the concrete
choice of $\xi_1\in \mathcal{S}(\tau,\epsilon)$, and
a template $f^{(2)}=(f_1^{(2)}, f_2^{(2)})$ for $\xi_2$ is given by the $f_i^{(2)}$ 
touching the first axis precisely at positions
$t_j^{(2)}:=\tau t_j$, so that $f_1^{(2)}$ has slope 
$-1$ in $(t_j^{(2)},(t_j^{(2)}+t_{j+1}^{(2)})/2)$ and slope $+1$ in
$((t_j^{(2)}+t_{j+1}^{(2)})/2,t_{j+1}^{(2)})$, and vice versa for $f_2^{(2)}$. Then $t_{j+1}=\tau t_{j}^{(2)}$ holds.
Moreover
\begin{equation} \label{eq:elle}
|s_{1,j} \xi_1-r_{1,j}|\asymp s_{1,j}^{-(n\tau+\epsilon)}, \qquad
|s_{2,j} \xi_2-r_{2,j}|\asymp s_{2,j}^{-\tau^2}
\end{equation}
by the theory of continued fractions.
Again condition
$\tau>n$ is required and sufficient to 
deduce from \eqref{eq:YE} and \eqref{eq:elle} that
$\wo(\xi_1, \xi_2)\ge n+\epsilon/\tau>n$ (with a slight twist if $\tau=\infty$) by considering $\g_j$ and $\h_j$ defined as in \eqref{eq:gjhj}, so that we may apply Theorem~\ref{moschus}.
Assume for the moment the coprimality condition
\eqref{eq:vieE}.
Then again by the same arguments as in~\S~\ref{s4}, for any 
$(\xi_1, \xi_2)$ as above
we get a full measure set $F_n(\xi_1,\xi_2)\subseteq \R^{n-2}$ 
so that for any $\ux=(\xi_1,\ldots,\xi_n)$ with  $(\xi_3,\ldots,\xi_n)\in F_n(\xi_1,\xi_2)$,
the $\g_j$ and $\h_j$ comprise all best approximations. Hence (i)-(v) hold.
Then moreover $\wo(\ux)=\wo(\xi_1,\xi_2)= n+\epsilon/\tau$
and $\om(\ux)=\om(\xi_1,\xi_2)=\tau^2$ by the right formula in \eqref{eq:elle}, however this is not significant 
for the proof. Finally, given $\xi_1\in \mathcal{S}_{\tau,\epsilon}$,
condition \eqref{eq:vieE} can be checked
for certain $\xi_2$ as above by a very similar counting argument 
as in~\S~\ref{s4}, we omit details.

Summing up, any $\xi_1\in \mathcal{S}_{\tau,\epsilon}$ 
induces some $\xi_2$ as above so 
that for a full Lebesgue measure set of
$\xi_3,\ldots,\xi_n$ (depending on $\xi_1,\xi_2$), the vector $\ux=(\xi_1,\ldots,\xi_n)$
lies in $\Theta^n$. Thus again by (I) of Lemma~\ref{Mars} 
applied for $M=\Theta^n$ and $n_1=2, n_2=t=n-2$
and the non-increasing of Hausdorff dimension under projections, we have 
\begin{equation} \label{eq:sho}
\dim_H(\Theta^n)\ge n-2+ \dim_H(\mathcal{S}_{\tau,\epsilon}).
\end{equation}
We evaluate the latter dimension.
The local contraction rate $\delta(f_{\tau,\epsilon},I)$ defined 
in~\cite[Definition~4.5]{dfsu2} in an interval partition of $(0,\infty)$ for the template in Figure~1 is given by
\[
\delta(f_{\tau,\epsilon},I)=\begin{cases}
1, \quad I=(t_j(n\tau+1+\epsilon)/2,\tau^2 t_j) \\
0, \quad I=(t_j,t_j(n\tau+1+\epsilon)/2).
\end{cases}
\]
We evaluate the lower limit $\underline{\delta}(f_{\tau,\epsilon})$ of the average local contraction rate in $t\in [0,T]$ as $T\to\infty$ according to the 
variational principle~\cite[Theorem~4.7]{dfsu2}. As $\epsilon\to 0$,
a short calculation and \eqref{eq:sho} lead for any $\tau>n$ to the bound
\[
\dim_H(\Theta^n)\ge n-2+\lim_{\epsilon\to 0} \dim_H(\mathcal{S}_{\tau,\epsilon})\ge n-2+\lim_{\epsilon\to 0} \underline{\delta}(f_{\tau,\epsilon}) = n-2+ 
\frac{ \tau^2- \frac{n\tau+1}{2} }{(\tau^2-1)\frac{n\tau+1}{2} }.
\]
As $\tau>n$ can be arbitrary, we get
\begin{equation} \label{eq:taurin}
\dim_H(\Theta^n)\ge n-2+ 
\max_{\tau>n} \frac{ \tau^2- \frac{n\tau+1}{2} }{(\tau^2-1)\frac{n\tau+1}{2} }.
\end{equation}
Note that inserting $\tau=n$, we obtain 
the left bound $n-2+1/(n^2+1)$ of \eqref{eq:theclaim}. The maximum over $\tau$
is taken at some slightly larger value, thereby confirming the strict inequality in \eqref{eq:theclaim}.

For the packing dimension, as $\epsilon\to 0$ we evaluate the 
upper limit $\overline{\delta}(f_{\tau,\epsilon})$ of the average contraction rates for the template in Figure~1 as
\[
\lim_{\epsilon\to 0} \overline{\delta}(f_{\tau,\epsilon})=\frac{ \tau^2-\frac{n\tau+1}{2}  }{ \tau^2-\tau }.
\] 
The quantity tends to $1$ as $\tau\to\infty$. 
Thus, for any $\varepsilon>0$ (note $\varepsilon\ne \epsilon$), choosing $\tau$ large enough and $\epsilon>0$ small enough, by
means of (II) from Lemma~\ref{Mars}
and as coordinate projections as Lipschitz maps again do not increase the packing dimension of a set~\cite{falconer},
the variational principle~\cite[Theorem~4.7]{dfsu2} gives the lower
bound 
\[
\dim_P(\Theta^n)\ge n-2+\lim_{\epsilon\to 0} \dim_{P}(\mathcal{S}_{\tau,\epsilon}) \ge  n-2+\lim_{\epsilon\to 0} \overline{\delta}(f_{\tau,\epsilon})>n-1-\varepsilon, \qquad n\ge 2.
\]
As $\varepsilon>0$ can be arbitrarily small, 
we obtain the desired bound $n-1$.

\begin{remark}  \label{hhh}
	As remarked in the proof,
	for $\ux\in\R^{n}$ obtained via $\xi_1, \xi_2$ as constructed above, we have
	\[
	\wo(\ux)= n+\frac{\epsilon}{\tau}, \qquad \om(\ux)=\tau^2.
	\]
	Thus, it is possible to find lower bounds for the Hausdorff
	and packing dimension of sets with a slightly altered definition compared to $\Theta_n(w)$, for example in place of (vi)
	just restricting to $\wo(\ux)=w$ and omitting the claim on the ordinary exponent.
\end{remark}




\section{Sketch of the Proof of Theorem~\ref{thm3}  }

We modify the construction from~\S~\ref{s1},~\ref{lbb}.
Fix $k$ and $n$.
For $\tau>1$ large enough,
we define $k$ sequences $(A_{i,j})_{j\ge 1}$, $1\le i\le k$, of the form
\[
A_{i,j}= p_i^{\alpha_{i,j} }
\]
where $p_i$ is the $i$-th prime number (any set of pairwise coprime integers $\ge 2$ suffices) and $(\alpha_{i,j})_{j\ge 1}$ 
form increasing sequences of positive integers, and for $j\ge 1$ with
the properties
\begin{equation}  \label{eq:gg}
A_{i+1,j}\asymp A_{i,j}^{\tau}\quad (1\leq i\leq k-1), \qquad
A_{1,j+1}\asymp A_{k,j}^{\tau}.
\end{equation}
Again it is clear that such a choice is possible.
Then in particular
\[
A_{1,1} < A_{2,1}< \cdots < A_{k,1} < A_{1,2} < A_{2,2} < \cdots,\qquad
A_{i,j+1}\asymp A_{i,j}^{\tau^{k} }, \quad (1\leq i\leq k,\; j\ge 1).
\]
Let
\[
\xi_i= \sum_{j=1}^{\infty} A_{i,j}^{-1}, \qquad\qquad 1\leq i\le k.
\]
Define the positive integers
\[
F_{i,j}= A_{i,j} \sum_{t=1}^{j} A_{i,t}^{-1} \asymp A_{i,j},\qquad
1\le i \le k,\; j\ge 1,
\]
and derive
\[
\vv_{i,j}= A_{i,j}\cdot \ee_i - F_{i,j}\cdot \ee_{n+1} \in \mathcal{H}_i, \qquad 1\le i\le k,\; j\ge 1.
\] 
Note that all $\vv_{i,j}$ lie in $\mathcal{L}_{n,k}$. Moreover,
it is easy to see that
any $k$ (in fact $k+1$) consecutive $\vv_{i,j}$, ordered by norms $\Vert\hat\vv_{i,j}\Vert=A_{i,j}$, are linearly independent. 
Taking any such collection $\mathcal{C}_{i_0,j_0}$ where $\vv_{i_0,j_0}$ is defined to be its largest vector by norm, then  $\mathcal{L}(i_0,j_0):=\scp{\mathcal{C}_{i_0,j_0}}_{\R}\cap \Z^{n+1}$ is some $k$-dimensional sublattice of $\Z^{n+1}$. More precisely, these 
$\mathcal{L}(i_0,j_0)$ 
are sublattices of $\mathcal{L}_{n,k}=\scp{\ee_{1},\ldots,\ee_k,\ee_{n+1}}_{\Z}$
of codimension $1$. 
Since again $F_{i,j}\equiv 1 \bmod p_i$ for 
any $1\le i\le k$, $j\ge 1$,
it can be shown very similarly to the proof of Case 1 of Theorem~\ref{H} in~\S~\ref{s1}
that $\mathcal{L}(i_0,j_0)=\scp{\mathcal{C}_{i_0,j_0}}_{\Z}$ and the best approximations for $\ux$ within any 
$\mathcal{L}(i_0,j_0)$ must be actually among the 
$k$ vectors $\vv_{i,j}\in \mathcal{C}_{i_0,j_0}$. 

Assume for the moment $n=k$, so that the $\R$-span of any $\mathcal{L}(i_0,j_0)$ above is
a hyperplane of $\R^{n+1}$.
Assume $\tau> \tau_0(k)$ is large enough,
more precisely we require 
\begin{equation} \label{eq:Zzzz}
\frac{\tau^{k}-1}{\tau^{k-1}}>k.
\end{equation}
Then similarly as
in the proof of Case 2 of Theorem~\ref{H} in~\S~\ref{s1}, 
we can further exclude via a generalisation of Lemma~\ref{lemur}  (i.e. no $k+1$ linearly independent vectors of $\Z^{k+1}$ induce approximations of order $k+\epsilon$) again derived from Minkowski's Second Convex Body Theorem applied with $Q=\Vert \vv_{i_0,j_0}\Vert$
that any best approximation of large norm lies outside the sets
$\mathcal{L}(i_0,j_0)$. Hence the $\vv_{i,j}$ comprise all best approximations of large norm. Claim ($ii^{\ast}$) follows directly.
Moreover, since it is easily seen that 
$\vv_{i,j}\notin \mathcal{H}_v$ for any triple 
$(i,j,v)$ with $v\ne i$, no proper subset of
$\{\mathcal{H}_1,\ldots, \mathcal{H}_k\}$ preserves $(ii^{\ast}$). On the other hand, any two-dimensional lattice not contained in some 
$\mathcal{H}_i$ has at most one-dimensional
intersection with each $\mathcal{H}_i$, hence by ($ii^{\ast}$) 
and Remark~\ref{reh} it contains only finitely many best approximations. The first claim of
(vii) follows easily as well. By a similar argument a lattice
of dimension $k$ is insufficient as well: As we find infinitely many
best approximations in each $\mathcal{H}_i$, thus any lattice
containing a tail of best approximations
must have 2-dimensional intersection with each of them, 
by Remark~\ref{reh} again. But this is only possible if
its dimension exceeds $k$.
Moreover, the analogue of \eqref{eq:wdach} that reads
\[
\wo(\xi_1,\ldots,\xi_{k})= \frac{ \tau^{k}-1}{\tau}, \qquad
\om(\xi_1,\ldots,\xi_{k})=\tau^{k}-1
\]
holds. Since $k=n$, we have found a suitable vector $(\xi_1,\ldots,\xi_n)=(\xi_1,\ldots,\xi_{k})$. 

Now assume $n>k$. Increase $\tau$ if necessary, so that in
addition to \eqref{eq:Zzzz}, we have $(\tau^{k}-1)/\tau>n$
as well. Take any $(\xi_1,\ldots,\xi_{k})$ as 
in the case $n=k$ above. We see
that 
\[
\wo(\xi_1,\ldots,\xi_{k})=\frac{\tau^{k}-1}{\tau}>n.
\]
Hence
we may apply a more general version of Theorem~\ref{moschus} from~\cite{ngm}, stating that for almost all vectors $(\xi_{k+1}, \xi_{k+2},\ldots,\xi_{n})\in\R^{n-k}$
with respect to Lebesgue measure, the magnified
vector $\ux=(\xi_1,\ldots,\xi_n)$ has
the same tail of best approximations up to lifting, i.e.
again lie in the according lattices $\mathcal{H}_i=\mathcal{H}_i(n)$. The claims ($ii^{\ast}$), (vii) follow analogously to the case
$n=k$. The lower bound $n-k$ for the Hausdorff
dimension is clear, finally the inequality being strict can be shown by the method from~\S~\ref{s4}, we omit details.


\section{Proof of Theorem~\ref{neu} }

We first point out that
in~\S~\ref{s1} above, essentially
we only require the special form of $\xi_1, \xi_2$ to  
exclude in Case 1 other best approximations within the lattices $\scp{\vv_j,\w_j}_{\Z}$ and $\scp{\w_j,\vv_{j+1}}_{\Z}$, sublattices 
of $\scp{\ee_1,\ee_2,\ee_{n+1}}_{\Z}$. In context of Theorem~\ref{neu},
this will not be an issue when we simply lift the sequence of best
approximations of some $(\xi_1,\xi_2)$ to $\Z^{n+1}$.
However, it turns out that our method below requires the property $\wo(\xi_1,\xi_2)> 3n-4$ to eliminate the dependence of 
the derived set $F_n$ from $\xi_1,\xi_2$. 
So let $\xi_1, \xi_2$ be any such numbers, totally irrational.
To simplify the argument below, we fix any constant
\begin{equation} \label{eq:israel}
\sigma\in(3n-4,\wo(\xi_1,\xi_2)).
\end{equation}
Let $(v_{j,1}, v_{j,2}, v_{j,3})\in\Z^3$ be the sequence of best approximations associated to $(\xi_1,\xi_2)\in\R^2$.
Via embedding into $\Z^{n+1}$ it gives rise to a sequence
which by abuse of notation we want to denote again 
by $(\vv_j)_{j\ge 1}$, so
\[
\vv_j= v_{j,1}\ee_1 + v_{j,2}\ee_2 + v_{j,3}\ee_{n+1} \in\Z^{n+1}, \qquad j\ge 1.
\]
Note that any $\vv_j$ lies in the fixed
three-dimensional sublattice $\scp{\ee_1, \ee_2, \ee_{n+1} }_{\Z}$ of $\Z^{n+1}$ as in the definition of $\widetilde{\Gamma}_n$.

Fix any $(\xi_3,\ldots, \xi_n)\in \R^{n-2}$ as in $\mathcal{T}_n$, that
is any vector satisfying the generic property
\begin{equation} \label{eq:ome}
\om(\xi_3,\ldots, \xi_n)=n-2. 
\end{equation}
Here we cannot use Theorem~\ref{moschus}, so we follow another strategy.
Again assume $\bb$ is any best approximation of large norm. 
There is a unique $j$ such that 
\begin{equation} \label{eq:Tzz}
\Vert \hat\vv_j\Vert\le \Vert \hat\bb\Vert< \Vert \hat\vv_{j+1}\Vert.
\end{equation}
We will show that $\bb=\vv_j$ to finish the proof.
First observe that since $\bb$ is a best approximation
of norm at least $\Vert\hat\vv_j\Vert$, we know that
\begin{equation}  \label{eq:seezz}
|\bb\cdot \uxs|\le |\vv_j\cdot \uxs|.
\end{equation}
Besides, a simple application of Minkowski's Second Convex Body Theeorem implies

\begin{lemma}  \label{llama}
	Let $\xi_3,\ldots,\xi_{n}$ satisfy \eqref{eq:ome}
	and $\varepsilon>0$.
	For any $Q\ge Q_0(\varepsilon)$, there exist $n-1$ linearly independent integer vector solutions $\uu_1, \ldots, \uu_{n-1}$ 
	in $\Z^{n-1}$ to
	\[
	\max_{1\le i\le n-1} \Vert \hat\uu_i\Vert\le Q, \qquad 
	| \uu_i\cdot (\xi_3,\ldots,\xi_{n},1)|\le Q^{-(n-2)+\varepsilon},
	\]
	where again $\hat\uu_i$ omits the last coordinate of $\uu_i$.
\end{lemma}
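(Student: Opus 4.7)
The plan is to apply Minkowski's Second Convex Body Theorem to a carefully chosen parallelepiped. Define the symmetric convex body
\[
K_Q := \{ \xx = (x_1, \ldots, x_{n-1}) \in \R^{n-1} : |x_i| \le Q \text{ for } 1 \le i \le n-2,\; |L(\xx)| \le Q^{-(n-2)+\varepsilon}\},
\]
where $L(\xx) = \xi_3 x_1 + \cdots + \xi_n x_{n-2} + x_{n-1}$. By Fubini its volume equals $2^{n-1} Q^{\varepsilon}$, so Minkowski's Second Theorem gives
\[
\lambda_1(K_Q) \cdots \lambda_{n-1}(K_Q) \;\le\; \frac{2^{n-1}}{\mathrm{vol}(K_Q)} \;=\; Q^{-\varepsilon}.
\]
The goal reduces to showing $\lambda_{n-1}(K_Q) \le 1$ for all $Q \ge Q_0(\varepsilon)$, since the $n-1$ linearly independent integer vectors realising $\lambda_{n-1}$ then already lie in $K_Q$ itself, which is precisely the conclusion.

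The substantive step will be a matching lower bound on $\lambda_1(K_Q)$ coming from the hypothesis $\om(\xi_3,\ldots,\xi_n) = n-2$. Fix a small auxiliary $\delta > 0$. By definition of $\om$, for the value $t = n-2 + \delta$ the relevant liminf is infinite, so for all $Q$ sufficiently large every non-zero $\bb \in \Z^{n-1}$ with $\hat\bb \ne \0$ and $\Vert \hat\bb \Vert \le Q'$ satisfies $|\bb \cdot (\xi_3, \ldots, \xi_n, 1)| \ge (Q')^{-(n-2)-\delta}$. Let $\bb$ be a non-zero integer vector in $\lambda_1(K_Q)\cdot K_Q$; the edge case $\hat\bb = \0$ forces $\lambda_1$ to be of order $Q^{n-2-\varepsilon}$ and thus is harmless, so assume $\hat\bb \ne \0$. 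Applying the linear form lower bound with $Q' = \lambda_1 Q$ and combining with the upper bound $|\bb \cdot (\xi_3,\ldots,\xi_n,1)| \le \lambda_1 Q^{-(n-2)+\varepsilon}$ built into the body gives
\[
(\lambda_1 Q)^{-(n-2)-\delta} \;\le\; \lambda_1 Q^{-(n-2) + \varepsilon},
\]
which rearranges to $\lambda_1(K_Q) \ge Q^{-(\varepsilon+\delta)/(n-1+\delta)}$.

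Combining this with the product bound and the ordering $\lambda_1 \le \cdots \le \lambda_{n-1}$ yields
\[
\lambda_{n-1}(K_Q) \;\le\; \lambda_1(K_Q)^{-(n-2)} \prod_{i=1}^{n-1} \lambda_i(K_Q) \;\le\; Q^{(n-2)(\varepsilon + \delta)/(n-1+\delta) - \varepsilon}.
\]
A short computation shows that for any $\varepsilon > 0$, the exponent is strictly negative provided $\delta$ is chosen small enough; indeed its limit as $\delta \to 0$ is $\varepsilon(n-2)/(n-1) - \varepsilon = -\varepsilon/(n-1) < 0$. Hence for all $Q \ge Q_0(\varepsilon)$ we have $\lambda_{n-1}(K_Q) < 1$, and taking any $n-1$ linearly independent integer points realising $\lambda_{n-1}$ as $\uu_1, \ldots, \uu_{n-1}$ concludes the proof. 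The only delicate point is this exponent bookkeeping, which ultimately hinges on the strict inequality $(n-2)/(n-1) < 1$, so no genuine obstacle arises.
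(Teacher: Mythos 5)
Your proof is correct and follows essentially the same route as the paper: apply Minkowski's Second Convex Body Theorem to a parallelepiped, lower-bound $\lambda_1$ using $\om(\xi_3,\ldots,\xi_n)=n-2$, and conclude that $\lambda_{n-1}$ is small enough to give $n-1$ linearly independent integer points in the box. The only cosmetic difference is that you build the factor $Q^{\varepsilon}$ into the body directly, whereas the paper uses the exact Dirichlet box of volume $2^{n-1}$ and reparametrises $Q$ at the end.
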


\begin{remark}  \label{hirsch}
	If we would restrict to $c$-badly approximable vectors in $\R^{n-2}$
	whose Hausdorff dimension tends to $n-2$ as $c\to 0^{+}$, then we could sharpen the right hand side estimate to $\ll_{c,n} Q^{-(n-2)}$,
	which would simplify the proof below a little.
\end{remark}

\begin{proof}
	Consider the integer lattice in $\Z^{n-1}$ and for $Q\ge 1$ the convex body
	consisting of $(x_1,\ldots,x_{n-1})\in\R^{n-1}$ with 
	\[
	\max_{1\le i\le n-2} |x_i|\le Q,\quad  |\xi_3 x_1+\xi_4 x_2+\cdots +\xi_{n} x_{n-2} + x_{n-1}|\leq Q^{-(n-2)}.
	\]
	The volume is $2^{n-1}$, so by Minkowski's Second convex body theorem the product of the successive minima are of order $\asymp_n 1$ as well.
	The condition \eqref{eq:ome} tells us that the first successive minimum
	is $\gg Q^{-\epsilon}$ for any $\epsilon>0$ and all $Q\ge Q_0(\epsilon)$. Hence all successive minima
	are $\ll_n Q^{\epsilon/(n-2)}$. This easily yields the claim by slightly
	modifying $\epsilon$ to some $\varepsilon$ if necessary. 
\end{proof}

Let $\uu_1,\ldots,\uu_{n-1}$ be the vectors
from Lemma~\ref{llama} for the parameter
\begin{equation} \label{eq:kuh}
Q=\Vert \hat\vv_{j+1}\Vert^{  \frac{\sigma+1}{n-1}},
\end{equation}
that later turns out optimal, and let
$\yy_1,\ldots,\yy_{n-1}$ be their embeddings
into $\Z^{n+1}$ via 
\[
\yy_i=(0,0,\uu_i), \qquad 1\le i\le n-1.
\]
Let
\[
Y=\{ \yy_1,\ldots,\yy_{n-1}\}, \qquad \scp{Y}_{\R}=\scp{\ee_3,\ldots,\ee_{n+1} }_{\R}.
\]
Assume we have already shown that
\begin{equation}  \label{eq:Zy}
\scp{ \vv_j, \vv_{j+1}}_{\R}\cap \scp{Y}_{\R}= \scp{ \vv_j, \vv_{j+1}}_{\R}\cap \scp{\ee_{n+1}}_{\R}=\0, \qquad j\ge j_0.
\end{equation}
Then the vectors $\{ \yy_1,\ldots,\yy_{n-1}, \vv_j, \vv_{j+1} \}$ span the entire space $\R^{n+1}$ for any $j\ge j_0$,
i.e. 
\begin{equation}  \label{eq:together}
\scp{ Y,\vv_{j}, \vv_{j+1}}_{\R} = \R^{n+1}.
\end{equation}
If we assume that $\bb\in \scp{\vv_j,\vv_{j+1}}_{\R}\cap \Z^{n+1}$, 
then obviously $\bb\in \scp{\ee_1,\ee_2,\ee_{n+1}}_{\Z}$. But 
then if we write $\bb=(b_1,b_2,0,\ldots,0,b_{n+1})$, the vector $(b_1,b_2,b_{n+1})\in\Z^3$ constitutes a best approximation with respect to $(\xi_1,\xi_2)$ by \eqref{eq:Tzz} and \eqref{eq:seezz}. 
In fact $(b_1,b_2,b_{n+1})=(v_{j,1}, v_{j,2}, v_{j,3})$ since
the latter comprise all ordered 
best approximations and $(\xi_1,\xi_2)$ is totally irrational,
hence equivalently indeed $\bb=\vv_j$.
Assume otherwise that
\begin{equation}  \label{eq:beraber}
\dim(\scp{\vv_j,\vv_{j+1},\bb}_{\R})= 3,
\end{equation}
so $\vv_j,\vv_{j+1},\bb$ span a space of dimension
three. We will derive a contradiction
to \eqref{eq:Tzz}, \eqref{eq:seezz} by application of Minkowski's Second Lattice Body Theorem.

With $Q$ as in \eqref{eq:kuh}, let
\[
T=\Vert \hat\vv_{j+1}\Vert^{  \frac{\sigma+1}{n+1}}=Q^{  \frac{n-1}{n+1}}.
\]
Then by construction and \eqref{eq:Tzz} we have
\begin{equation} \label{eq:FFF}
\max\{ \Vert \hat\bb\Vert, \Vert \hat\vv_j\Vert, \Vert \hat\vv_{j+1}\Vert\}= \Vert \hat\vv_{j+1}\Vert=T^{\frac{n+1}{\sigma+1} } 
\end{equation}
and by \eqref{eq:seezz} and since $\vv_j, \vv_{j+1}$ are two
successive best approximations and by \eqref{eq:israel} we have
\begin{equation} \label{eq:HHH}
\max\{ |\bb\cdot \uxs|,\; |\vv_j\cdot \uxs|, \; |\vv_{j+1}\cdot \uxs| \}
= |\vv_j\cdot \uxs|\ll \Vert \hat\vv_{j+1}\Vert^{-\sigma}= T^{-\sigma\frac{n+1}{\sigma+1} }.
\end{equation}
The definition of the $\yy_i$ yields
\begin{equation}  \label{eq:JJ}
\max_{1\le i\le n-1} \Vert \hat\yy_i\Vert =\max_{1\le i\le n-1} \Vert \hat\uu_i\Vert \le Q=  T^{  \frac{n+1}{n-1}}
\end{equation}
and
\begin{equation}  \label{eq:GGG}
\max_{1\le i\le n-2} |\yy_i\cdot \uxs| =\max_{1\le i\le n-2} |\uu_i\cdot (\xi_3,\ldots,\xi_n,1)|  
\le Q^{-(n-2)+\varepsilon}=
T^{(-(n-2)+\varepsilon)  \frac{n+1}{n-1}}.
\end{equation}
Define the convex body $K(T)$ of $(x_1,\ldots,x_{n+1})\in\R^{n+1}$ with 
\[
\max_{1\le i\le n} |x_i|\le T, \qquad |x_1 \xi_1+ \cdots +x_n \xi_n+x_{n+1}| \le T^{-n},
\]
of volume $2^{n+1}$ regardless of $T$.
Consider the successive minima $\lambda_i(K(T),\Z^{n+1})$, $1\le i\le n+1$, with respect to $K(T)$ and $\Z^{n+1}$.
One verifies by \eqref{eq:FFF}, \eqref{eq:HHH} and by \eqref{eq:beraber} with some calculation 
that the first three successive minima are bounded
from above by
\begin{equation} \label{eq:edrei}
\lambda_i(K(T),\Z^{n+1})\ll T^{\frac{n-\sigma}{1+\sigma}}, \qquad 1\le i\le 3.
\end{equation}
Indeed, for $c>0$, the first $n$ coordinates of the box $cT^{(n-\sigma)/(1+\sigma)}\cdot K(T)$
are bounded by
\[
T\cdot cT^{\frac{n-\sigma}{1+\sigma} }=  cT^{\frac{n+1}{\sigma+1}}
\]
and the last coordinate by
\[
T^{-n}\cdot cT^{\frac{n-\sigma}{1+\sigma}}= cT^{-\frac{(n+1)\sigma}{\sigma+1}}
\]
so for large enough $c$ the linearly independent points $\bb, \vv_j, \vv_{j+1}$ belong to this box.
Similarly, by \eqref{eq:JJ}, \eqref{eq:GGG} and \eqref{eq:together}
for larger indices we get
\begin{equation}  \label{eq:dree}
\lambda_i(K(T),\Z^{n+1})\le T^{\frac{2}{n-1}+3\varepsilon}, \qquad 4\le i\le n+1.
\end{equation}
Indeed, we see that the first $n$ coordinates of the box $T^{2/(n-1)+3\varepsilon} K(T)$ are 
bounded by
\[
T\cdot T^{\frac{2}{n-1}+3\varepsilon}\ge T\cdot T^{\frac{2}{n-1}}= T^{\frac{n+1}{n-1}}
\]
and the last by
\[
T^{-n}\cdot T^{\frac{2}{n-1}+3\varepsilon}= T^{ -\frac{(n-2)(n+1)}{n-1}+ 3\varepsilon }\ge T^{ -\frac{(n-2)(n+1)}{n-1}+ \varepsilon \frac{n+1}{n-1} }, \qquad n\ge 2,
\]
and we recognize the right hand side as the value in \eqref{eq:GGG}. Hence 
all $\yy_i$ lie in this box.

Combining \eqref{eq:edrei} and \eqref{eq:dree}, we get 
that the product of all successive minima is bounded by
\[
\prod_{i=1}^{n+1} \lambda_i(K(T),\Z^{n+1})\ll T^{ \frac{3(n-\sigma)}{1+\sigma}+ \frac{2(n-2)}{n-1}+3(n-2)\varepsilon}.
\]
We see that for $\varepsilon>0$ small enough, the exponent of $T$ is negative since $\sigma>3n-4$. Finally for large $T$ this contradicts Minkowski's Second Convex Body Theorem that predicts 
\[
\prod_{i=1}^{n+1} \lambda_i(K(T),\Z^{n+1})\gg_n 1
\]
with implied constant 
independent of $T$. We deduce that the vectors
$\pm \vv_j$ are the only
best approximations of large norm, which lie 
in $\scp{\ee_1, \ee_2, \ee_{n+1}}_{\Z}$. Hence indeed any $\ux$ as above
lies in $\widetilde{\Gamma}_n\subseteq \Gamma_n$, and $\wo(\ux)=\wo(\xi_1,\xi_2)$.

We finally verify \eqref{eq:Zy}. The left identity is clear by the 
form of the $\yy_i$ and $\vv_j$. The right
is clearly true if $v_{j,1}v_{j+1,2}-v_{j+1,1}v_{j,2}\ne 0$ for any large $j$, as then
$a\vv_j+b\vv_{j+1}\in \scp{\ee_{n+1}}$ for real numbers $a,b$ 
implies $a=b=0$.
Assume otherwise for some large $j$ the expression 
vanishes. Then for some $p/q\in\mathbb{Q}$, with $q\ge 1$ and
in reduced form, 
we have the vector identity
\[
(p/q)\cdot (v_{j,1},v_{j,2})= (v_{j+1,1}, v_{j+1,2})
\]
in $\Z^2$. 
Define the real numbers $\sigma_j$ via
\begin{equation} \label{eq:ivo}
|v_{j,1}\xi_1 + v_{j,2}\xi_2 + v_{j,3}|= \max\{ |v_{j,1}|, |v_{j,2}| \}^{-\sigma_j}.
\end{equation}
Then clearly $\sigma_j>3n-4$ for large $j$ by \eqref{eq:israel}.
Since any best approximation is primitive we must have
$(p/q)v_{j,3}\notin \mathbb{Z}$, in particular $(p/q)v_{j,3}\ne v_{j+1,3}$.
We may write
\begin{align*}
v_{j+1,1}\xi_1 + v_{j+1,2}\xi_2 + v_{j+1,3}=(p/q)(v_{j,1}\xi_1 + v_{j,2}\xi_2 + v_{j,3})+ v_{j+1,3}-(p/q)v_{j,3}.
\end{align*}
Since $q\nmid v_{j,3}$, the rational number $v_{j+1,3}-(p/q)v_{j,3}$
has absolute value at least $q^{-1}$. Thus
as $q$ divides both $v_{j,1}$ and $v_{j,2}$ and hence
$q\le \max\{ |v_{j,1}|, |v_{j,2}| \}$, by triangle inequality we infer 
\begin{align} 
|v_{j+1,1}\xi_1 + v_{j+1,2}\xi_2 + v_{j+1,3}|
&\ge |v_{j+1,3}-(p/q)v_{j,3}|- |p/q|\cdot |v_{j,1}\xi_1 + v_{j,2}\xi_2 + v_{j,3}| \nonumber \\ 
&\ge 
q^{-1}- |p/q|\cdot|v_{j,1}\xi_1 + v_{j,2}\xi_2 + v_{j,3}| \nonumber
\\ &\ge \max\{ |v_{j,1}|, |v_{j,2}| \}^{-1}-|p/q|\cdot \max\{ |v_{j,1}|, |v_{j,2}| \}^{-\sigma_j}.  \label{eq:gans}
\end{align}
Now since $(v_{j,1}, v_{j,2}, v_{j,3})$ and $(v_{j+1,1}, v_{j+1,2}, v_{j+1,3})$ are two consecutive best approximations, in view of
$\wo(\xi_1,\xi_2)>\sigma$ and \eqref{eq:ivo} it is not hard to see that
\[
|p/q|=\frac{\max\{ |v_{j+1,1}|, |v_{j+1,2}| \} }{ \max\{ |v_{j,1}|, |v_{j,2}| \} }\le \max\{ |v_{j,1}|, |v_{j,2}|\}^{\sigma_j/\sigma-1}.
\]
Hence, since obviously by $\sigma>3n-4>1$ we have
\[
\sigma_j - \sigma_j/\sigma+1> 1,
\]
inserting in \eqref{eq:gans} we see that 
\begin{align*}
|v_{j+1,1}\xi_1 + v_{j+1,2}\xi_2 + v_{j+1,3}|\gg \max\{ |v_{j,1}|, |v_{j,2}| \}^{-1}\ge \max\{ |v_{j+1,1}|, |v_{j+1,2}| \}^{-1}.
\end{align*}
This clearly contradicts $(v_{j+1,1}, v_{j+1,2}, v_{j+1,3})$ being a best approximation by Dirichlet's Theorem \eqref{eq:diri} and as $n\ge 2$, for large $j$. This completes
the proof of \eqref{eq:Zy}.


\begin{remark}  \label{r9}
	The proof works similarly with a bit more effort for $(\xi_1,\ldots,\xi_n)\in\R^{n}$ any vector
	with $\wo(\xi_1,\xi_2)> \max\{ \om(\xi_3,\ldots,\xi_n), 3n-4 \}$.
\end{remark}

\vspace{0.5cm} {\em The author thanks Nikolay Moshchevitin for bringing
to my attention the paper by Kolomeikina, Moshchevitin related to Problem~1 and providing explanations.}

\end{document}